\titleformat{\section}[hang]
{\normalfont\Large\bfseries}
{\thesection.}{0.5em}{}
\titlespacing*{\section}{0pc}{2pc}{0.25pc}
\titleformat{\subsection}[runin]
{\normalfont\large\bfseries}
{\thesubsection}{0.5em}{}
\titlespacing{\subsection}{0pc}{1.5pc}{0.5pc}
\newcommand{\supp}{\text{supp}}
\newcommand{\Aut}{\text{Aut}}
\newcommand{\R}{\mathbb{R}}
\newcommand{\C}{\mathbb{C}}
\renewcommand{\H}{\mathcal{H}}
\newcommand{\B}{\mathcal{B}}
\newcommand{\K}{\mathcal{K}}
\renewcommand{\>}{\right\rangle}
\newcommand{\dom}{\text{dom}}
\definecolor{ggreen}{HTML}{7FDD99}
\definecolor{obnoxious}{HTML}{662288}
\newcommand{\Ad}[1]{\text{Ad}\left(#1\right)}
\newcommand{\E}{\mathcal{E}}
\newcommand{\Sd}{\operatorname{Sd}}
\renewcommand{\S}{\operatorname{S}}
\newcommand{\Sp}{\operatorname{Sp}}
\newtheorem{thm}{Theorem}[section]
\newtheorem{prop}[thm]{Proposition}
\newtheorem{lem}[thm]{Lemma}
\newtheorem{cor}[thm]{Corollary}
\theoremstyle{definition}
\newtheorem{ex}[thm]{Example}
\newtheorem{rem}[thm]{Remark}
\title{\textbf{On modular invariants of twisted group von Neumann algebras of almost unimodular groups}}
\author{Aldo Garcia Guinto$^\circ$}
\address{$\circ$Department of Mathematics, Michigan State University, East Lansing, MI 48824, USA\hfill \url{garci575@msu.edu}}
\author{Yuki Miyamoto$^\bullet$}
\address{$^\bullet$Department of Mathematics and Informatics, Chiba University, 1-33 Yayoi-cho, Inage Chiba, 263-8522, Japan \hfill \url{25wd0101@student.gs.chiba-u.jp}}
\date{}
\begin{document}

\begin{abstract}
Given a locally compact second countable group $G$ with a 2-cocycle $\omega$, we show that the restriction of the twisted Plancherel weight $\varphi^\omega_G$ to the subalgebra generated by a closed subgroup $H$ in the twisted group von Neumann algebra $L_\omega(G)$ is semifinite if and only if $H$ is open. When $G$ is almost unimodular, i.e. $\ker\Delta_G$ is open, we show that $L_\omega(G)$ can be represented as a cocycle action of the $\Delta_G(G)$ on $L_\omega(\ker\Delta_G)$ and the basic construction of the inclusion $L_\omega(\ker\Delta_G)\leq L_\omega(G)$ can be realized as a twisted group von Neumann algebra of $\Delta_G(G)\hat{\ } \times G$, where $\Delta_G$ is the modular function. Furthermore, when $G$ has a sufficiently large non-unimodular part, we give a characterization of $L_\omega(G)$ being a factor and provide a formula for the modular spectrum of $L_\omega(G)$.
\end{abstract}

\maketitle

\section*{Introduction}
Given a locally compact second countable group $G$ with a left Haar measure $\mu_G$ and Borel 2-cocycle $\omega: G \times G \to \mathbb{T}$, the twisted group von Neumann algebra $L_\omega(G)$ admits a canonical faithful normal semifinite weight $\varphi^\omega_G$, called the twisted Plancherel weight. The twisted group von Neumann algebra is generated by the left regular $\omega$-projective representation, which is defined by $(\lambda^\omega_G(s)f)(t) = \omega(t^{-1},s) f(s^{-1}t)$ for all $f \in L^2(G)$ and $s,t\in G$. For a bounded compactly supported Borel function $f :G \to \mathbb{C}$, we have
    \[
        \varphi^\omega_G(\lambda^\omega_G(f)^*\lambda^\omega_G(f)) = \| f\|_2^2,
    \]
where 
    \[
        \lambda^\omega_G(f) = \int_G f(s) \lambda_G^\omega(s) d\mu_G(s) \in L_\omega(G)
    \]
(see \cite{Sut80}). Furthermore, $L_\omega(G)$ is independent of $\omega$, up to cohomology, and there is an isomorphism of Hilbert spaces $L^2(L_\omega(G),\varphi^\omega_G)\cong L^2(G)$ which carries the modular operator $\Delta_{\varphi^\omega_G}$ of $\varphi^\omega_G$ to the modular function $\Delta_G$ of $G$ acting by pointwise multiplication. In the case, when $\omega$ is cohomologous to the trivial cocycle, this restricts to the group von Neumann algebra $L(G)$ and its Plancherel weight $\varphi_G$. 

If $G$ is discrete, the structure of $L_\omega(G)$ is better understood. Indeed, $\varphi^\omega_G$ is a faithful normal tracial state if and only if $G$ is discrete and factoriality of $L_\omega(G)$ has a characterization due to Kleppner (see \cite[Theorem 3]{Kle62}). For factoriality, the proof follows from the fact that any $x \in L_\omega(G)$ can be represented as $\sum_{s \in G} c_x(s) \lambda_G^\omega(s)$, where $c_x(s) \in \C$ and the convergence is in the strong operator topology. In general, we know that $\varphi^\omega_G$ is non-tracial if and only if $G$ is non-unimodular, but it could be the case that $L_\omega(G)$ is a semifinite von Neumann algebra. Moreover, there is no known characterization of factoriality of $L(G)$ in general. In \cite{Vaes25}, Vaes showed that there exists a locally compact group $G$ that admits a 2-cocycle such that $L(G)$ is a factor but $L_\omega(G)$ is not. This contrasts the discrete case, since $L(G)$ being a factor implies $L_\omega(G)$ is also a factor. Regardless, when $G$ is totally disconnected, Morando provides a sufficient condition for factoriality of $L(G)$ (see \cite[Theorem 1.9]{Mor25}). 

In \cite{GG25}, the first named author showed that for any 2-cocycle $\omega$, we have that $\varphi_G^\omega$ is strictly semifinite, i.e. $\varphi^\omega_G$ is semifinite on $\{ \lambda^\omega_G(s):s \in \ker\Delta_G\}''$ if and only if $G$ is almost unimodular (see \cite[Theorem 2.1]{GGN25} for no cocycle case). Here \emph{almost unimodular} means that $\ker\Delta_G$ is open (see \cite[Definition 2.2]{GGN25}). In \cite{Miy25}, the second named author showed that $\varphi_G$ is semifinite on $\{ \lambda_G(s) : s \in H\}''  \leq L(G)$ if and only if $H$ is open, and characterized factoriality of $L(G)$, when $G$ is almost unimodular with large non-unimodular part. Although the first result is known to hold for quantum groups, which is a generalization of locally compact groups, the proof is elementary in the sense that it is without any working knowledge of quantum groups. In particular, it gives an alternative proof of \cite[Theorem 2.1]{GGN25}. In this paper, we establish $\varphi_G^\omega$ is semifinite on $\{ \lambda^\omega_G(s) : s \in H\}''  \leq L_\omega(G)$ if and only if $H$ is open. Thus we generalize \cite[Theorem 2.1]{GG25} and \cite[Theorem 2.21]{Miy25}. 

In Section \ref{sec:decomp_of_twisted_gp_vNa}, we show that $L_\omega(G)$ can be represented as a twisted crossed product associated to a cocycle action of $\Delta_G(G)$ on $L_\omega(\ker\Delta_G)$, when $G$ is almost unimodular (see Theorem~\ref{thm:cocycle_action_decomp}). The existence of the cocycle action depends on $\Delta_G(G)$, which is mainly why one needs the assumption of the non-unimodular part $\Delta_G(G)$ being large. Furthermore, we establish that the point modular extension of the modular automorphism $\sigma^{\varphi^\omega_G}_t$ is the dual action associated to this cocycle action (see Corollary~\ref{cor:forms_of_the_twisted_basic_construction}). Consequently, there are many ways to represent the basic construction associated to the inclusion $L_\omega(\ker\Delta_G) \leq L_\omega(G)$, which is similar to \cite[Theorem 5.1]{GGN25} (see Corollary~\ref{cor:forms_of_the_twisted_basic_construction}). One of these representations is as a twisted group von Neumann algebra associated to $\Delta_G(G)\hat{\ } \times G$ similar to \cite[Theorem 2.2]{GG25}. We prove that factoriality of $L_\omega(\ker\Delta_G)$ tells us that all intermediate von Neumann algebras $L_\omega(\ker\Delta_G) \leq P \leq L_\omega(G)$ are of the form $P = L_\omega(H)$ for some closed intermediate group $\ker\Delta_G \leq H \leq G$ (see Theorem~\ref{thm:twisted_intermidiated_subalgs}). Lastly, we obtain $\varphi_G^\omega$ is semifinite on $\{ \lambda^\omega_G(s) : s \in H\}''  \leq L(G)$ if and only if $H$ is open (see Theorem~\ref{thm:semifinite_and_H_is_open}).

In Section \ref{sec:invariats_of_twisted_group_von_Neumann_algebras}, we analyze various invariants of the twisted group von Neumann algebras of almost unimodular groups with a 2-cocycle. First, we show that the Connes' spectrum of the cocycle action obtained by Theorem \ref{thm:cocycle_action_decomp} has a full spectrum. We therefore obtain a characterization of factoriality of the twisted group von Neumann algebras of almost unimodular groups (see Corollary \ref{cor:factoriality_of_the_twisted_version}). Next, we provide the formula of the modular spectrum of $L_\omega(G)$, similar to the group von Neumann algebra case \cite{Miy25}. That is, if we decompose $L_\omega(G) \cong L_\omega(\text{ker}\Delta_G) \rtimes_{(\check{\alpha}^\omega, u^\omega)} \Delta_G(G)$ through Theorem \ref{thm:cocycle_action_decomp}, then Proposition 3.6 tells us that:
    \begin{align*}
         \text{S}(L_\omega(G)) \backslash \{0\} &= \bigcap_{0 \ne e\in Z(L_\omega(\ker\Delta_G))^P} \overline{ \{\delta \in \Delta_G(G) : \check{\alpha}^\omega_\delta(e)e \ne 0 \}} \backslash \{0\}
         \\ & \supseteq \overline{\{ k \in \Delta_G(G) : \check{\alpha}^\omega_\delta|_{Z(L_\omega(\ker\Delta_G))} = \text{id} \}} \backslash \{0\}.
     \end{align*}
Moreover, if $\Delta_G(G)$ is discrete in $\R_+$, then the following holds:
$$\text{S}(L_\omega(G)) \backslash \{0\} = \{ \delta \in \Delta_G(G) : \check{\alpha}^\omega_\delta|_{Z(L_\omega(G_1))} = \text{id} \} \backslash \{0\}.$$
Finally, we provide a characterization of the semifiniteness of the twisted group von Neumann algebras of almost unimodular groups through the T-invariant of $L_\omega(G)$ (see Proposition \ref{prop:condition_of_semifiniteness} and \ref{prop:the_T_invariant_of_twisted_version}). This allows us to give examples of twisted group von Neumann algebras which are of type $\rm{III}_0$-factors. 

\section*{Acknowledgment} 
AGG would like to thank his advisor, Brent Nelson, for his constant support, many helpful suggestions and discussions. YM would like to thank his advisor, Hiroshi Ando, for his valuable comments. Also, YM sincerely thank Kan Kitamura at RIKEN for helpful discussion about quantum groups. AGG was supported by NSF grant DMS-2247047 and  YM was supported by JST SPRING, Grant Number JPMJSP2109. 

\section{Preliminaries}
Throughout we let $G$ denote a locally compact second countable group, which is always assumed to be Hausdorff. We will use lattice notation for the collection of closed subgroups of $G$. That is, we write $H\leq G$ to denote that $H$ is a closed subgroup of $G$, we write $H_1\vee H_2$ for the closed subgroup generated by $H_1,H_2\leq G$, etc. Similarly for a von Neumann algebra $M$ and its collection of (unital) von Neumann subalgebras and for a Hilbert space $\H$ and its collection of closed subspaces. All homomorphisms between groups are assumed to be continuous and all isomorphisms are assumed to be homeomorphisms. In particular, all representations on Hilbert spaces are assumed to be strongly continuous. A representation of a von Neumann algebra $M$ will always mean a normal unital $*$-homomorphism $\pi\colon M\to B(\H)$.

Throughout, $\mu_G$ will denote a \emph{left Haar measure} on $G$, which is a non-trivial, left translation invariant Radon measure. We follow the convention in \cite{Fol16} and take a \emph{Radon measure} to be a Borel measure that is finite on compact sets, outer regular on Borel sets, and inner regular on open sets. Such measures always exist and are unique up to scaling. Additionally, $\mu_G(U)>0$ for all non-empty open sets $U$, $\mu_G(K)<\infty$ for all compact sets $K$, and the inner regularity of $\mu_G$ extends to all $\sigma$-finite subsets (see \cite[Section 2.2]{Fol16} or \cite[Section 11]{HR79} for further details). For $1\leq p\leq \infty$ we denote by $L^p(G)$ the $L^p$-space of $G$ with respect to left Haar measures, and for a given left Haar measure $\mu_G$ we will write $\|f\|_{L^p(\mu_G)}$ for the associated $p$-norm, whereas $\|f\|_\infty$ is the unambiguous $\infty$-norm. We also denote by $\mathcal{B}(G)$ the Borel $\sigma$-algebra on $G$. Recall that the \emph{modular function} $\Delta_G\colon G\to \R_+$ is the continuous homomorphism (where $\R_+:=(0,\infty)$ has its multiplicative group structure) determined by $\mu_G(E\cdot s)= \Delta_G(s) \mu_G(E)$ for $s\in G$ and $E\in \B(G)$, where $\mu_G$ is any left Haar measure on $G$. This yields the following change of variables formulas that will be used implicitly in the sequel: 
    \[ 
        \int_Gf(st) d\mu_G(s) = \Delta_G(t)^{-1} \int_G f(s) d\mu_G(s) \qquad \text{ and } \qquad\int_G f(s^{-1}) d\mu_G(s) =  \Delta_G(s)^{-1}\int_G f(s) d\mu_G(s)
    \]
for $t\in G$ and $f\in L^1(G)$. We say $G$ is \emph{unimodular} if $\Delta_G\equiv 1$.

We assume that the reader has some familiarity with modular theory for weights on von Neumann algebras, complete details can be found in \cite[Chapter VIII]{Tak03} (see also \cite[Section 1]{GGLN25} for a quick introduction to these concepts). Given a faithful normal semifinite weight $\varphi$ on a von Neumann algebra $M$, we denote
    \begin{align*}
        \sqrt{\dom}(\varphi)&:=\{x\in M\colon \varphi(x^*x)<+\infty\}\\
        \dom(\varphi)&:=\text{span}\{x^*y\colon x,y\in \sqrt{\dom}(\varphi)\}.
    \end{align*}
We write $L^2(M,\varphi)$ for the completion of $\sqrt{\dom}(\varphi)$ with respect to the norm induced by the inner product
    \[
        \<x,y\>_\varphi:= \varphi(y^*x) \qquad x,y\in \sqrt{\dom}(\varphi).
    \]
The \emph{modular conjugation} and the \emph{modular operator} for $\varphi$ will be denoted by $J_\varphi$ and $\Delta_\varphi$, respectively. The \emph{modular automorphism group} of $\varphi$, which we view as an action $\sigma^\varphi\colon \R\curvearrowright M$, is then defined by
    \[
        \sigma_t^\varphi(x):= \Delta_\varphi^{it} x \Delta_\varphi^{-it}.
    \]
Then the \emph{centralizer} of $\varphi$ is the fixed point subalgebra under this action and is denoted
    \[
        M^\varphi :=\{x\in M\colon \sigma_t^\varphi(x) = x \ \forall t\in \R\}.
    \]
We also recall that $x\in M^\varphi$ if and only if $xy,yx\in \dom(\varphi)$ with $\varphi(xy) = \varphi(yx)$ for all $y\in \dom(\varphi)$ (see \cite[Theorem VIII.2.6]{Tak03}). That is, $M^\varphi$ is the largest von Neumann algebra of $M$ on which $\varphi$ is tracial.

Recall that a weight $\varphi$ is \emph{strictly semifinite} if its restriction to $M^\varphi$ is semifinite; that is, if $\dom(\varphi)\cap M^\varphi$ is dense in $M^\varphi$ in the strong (or weak) operator topology. This property has several equivalent characterizations (see \cite[Lemma 1.2]{GGLN25}), but the most relevant to this article is the existence of a faithful normal conditional expectation $\E_\varphi\colon M\to M^\varphi$. After \cite{Con72}, we say a faithful normal semifinite weight $\varphi$ is \emph{almost periodic} if its modular operator $\Delta_\varphi$ is diagonalizable. We will write $\Sd(\varphi)$ for the point spectrum of $\Delta_\varphi$, so that
    \[
        \Delta_\varphi = \sum_{\delta\in \Sd(\varphi)} \delta 1_{\{\delta\}}(\Delta_\varphi)
    \]
whenever $\varphi$ is almost periodic. Note that an almost periodic weight is automatically strictly semifinite by \cite[Proposition 1.1]{Con74}.

\subsection{Twisted group von Neumann algebra}\label{subsec:Twisted_gp_vNa}
In this subsection, we remind the reader of the twisted group von Neumann algebra associated to a locally compact group with a 2-cocycle and its canonical weight. 

Recall that a \emph{(Borel) 2-cocycle} $\omega: G \times G \to \mathbb{T}$ on $G$ is a Borel function satisfying
    \[
        \omega(s,t)\omega(st,r) = \omega(t,r)\omega(s,tr)  \qquad s,t,r \in G.
    \]
We say that a 2-cocycle is \emph{normalized} if $\omega(s,e)=\omega(e,s) = 1$ for all $s \in G$. Further, a 2-cocycle is \emph{fully normalized} if it is normalized and satisfies $\omega(s,s^{-1})=1$ for all $s \in G$. Note that the pointwise conjugate $\overline{\omega}$ of a fully normalized 2-cocycle $\omega$ is also fully normalized 2-cocycle and satisfies 
    \[ 
        \overline{\omega(s,t)}=\omega(t^{-1},s^{-1}).
    \]
Two 2-cocycles $\omega_1$ and $\omega_2$ are cohomologous, denoted by $\omega_1 \sim \omega_2$, if there exists a Borel map $\rho: G \to \mathbb{T}$ such that $\rho(e) =1$ and 
    \[
        \omega_1(s,t) = \overline{\rho(s)\rho(t)}\rho(st)\omega_2(s,t) \qquad s,t \in G. 
    \]
Any 2-cocycle $\omega$ is cohomologous to a fully normalized 2-cocycle (see \cite[Section 1]{Kle74}).

Let $\lambda^\omega_G,\rho^\omega_G\colon G\to B(L^2(G))$ be the \emph{left and right regular $\omega$-projective representations} of $G$:
    \[
        [\lambda^\omega_G(s)f](t)=\omega(t^{-1},s)f(s^{-1}t) \qquad \text{ and } \qquad [\rho^\omega_G(s)f](t)=\Delta_G(s)^{1/2}\omega(t,s)f(ts)
    \]
where $s,t\in G$ and $f\in L^2(G)$. The \emph{twisted group von Neumann algebra} of $G$ is the von Neumann algebra generated by its left regular $\omega$-projective representation, $L_\omega(G):=\lambda^\omega_G(G)''$. We denote the von Neumann algebra generated by its right regular $\omega$-projective representation by $R_\omega(G):=\rho^\omega_G(G)''$. By \cite[Theorem 3.8]{Sut80}, we have that $R_{\overline{\omega}}(G)=L_\omega(G)'\cap B(L^2(G))$. By \cite[Proposition 2.4]{Sut80}, any two cohomologous 2-cocycles generate the same twisted von Neumann algebra, and so we can (and will) assume that $\omega$ is a fully normalized 2-cocycle for the rest of this subsection. For any $f\in L^1_\omega(G)$,
    \begin{align}\label{eqn:L1_function_operator}
        \lambda^\omega_G(f):=\int_G  f(t)\lambda^\omega_G(t)\ d\mu_G(t)
    \end{align}
defines an element on $L_\omega(G)$ such that $\| \lambda_G^\omega(f)\| \leq \|f\|_{L^1(\mu_G)}$, where $L^1_\omega(G)$ is the same norm space as $L^1(G)$ but equipped with the twisted convolution
    \[
        (f*_\omega g)(s) = \int_G \omega(t,t^{-1}s)f(t)g(t^{-1}s)\ d\mu_G(t)=\int_G \overline{\omega(t^{-1},s)}f(t)g(t^{-1}s)\ d\mu_G(t)
    \]
and the involution
    \[
        (f^\sharp)(s) = \Delta_G(s)^{-1}\overline{f(s^{-1})}.
    \]
These operations can be found in \cite[Definition 2.7]{Sut80} or \cite[Section 2]{EL69}, the convolutions will agree with the former after a change of variables. The mapping $f\mapsto \lambda^\omega_G(f)$ gives a $*$-homomorphism from $L_\omega^1(G)$ to $L_\omega(G)$ and the $*$-subalgebra $\lambda^\omega_G(L^1_\omega(G))$ is dense in $L_\omega(G)$ in the strong (and weak) operator topology (see \cite[Theorem 2.14]{Sut80}). 

Let $B_{b,c}(G)$ be the space of equivalence classes of bounded compactly supported Borel functions, where the equivalence is given by equality $\mu_G$-almost everywhere. We endow $B_{b,c}(G)$ with the twisted convolution, involution and inner product coming from being a subset of $L^1_\omega(G) \cap L^2(G)$, which is well-defined by \cite[Lemma 2.8]{Sut80}. Then $B_{b,c}(G)$ is a left Hilbert algebra, which is dense in $L^2(G)$ and $\lambda_G^\omega(B_{b,c}(G))''= L_\omega(G)$ (see \cite[Theorem 3.3]{Sut80}). We call the elements in the full Hilbert algebra associated to $B_{b,c}(G)$ by \textit{twisted left convolvers}, since it can be shown that $f \in L^2(G)$ is in the full left Hilbert algebra if $f *_\omega g \in L^2(G)$ for each $g \in L^2(G)$. The \emph{twisted Plancherel weight} associated to $\mu_G$ is a faithful normal semifinite weight defined on $L_\omega(G)_+$ by
    \[
        \varphi^\omega_G(x^*x) := \begin{cases}
            \|f\|_{L^2(\mu_G)}^2 & \text{if }x=\lambda^\omega_G(f)\text{, with $f$ a twisted left convolver} \\
            +\infty & \text{otherwise}
        \end{cases}.
    \]
In particular, we have $L^2( L_\omega(G), \varphi_G^\omega) = L^2(G)$ and the modular objects associated to the twisted Plancherel weight are 
    \begin{equation*}
        (S_{\varphi_G^\omega}f)(s)= \Delta_G(s)^{-1} \overline{f(s^{-1})}, \qquad
        (\Delta_{\varphi_G^\omega}f)(s)=\Delta_G(s)f(s),\qquad
        (J_{\varphi_G^\omega}f)(s)=\Delta_G(s)^{-1/2}\overline{f(s^{-1})},
    \end{equation*}
where $f \in L^2(G)$ and $s \in G$ (see \cite[Lemma 3.6]{Sut80}). Thus the modular automophism group ${\sigma^{\varphi^\omega_G}_t}$ is completely determined by $\Delta_G$:
    \[  
        \sigma^{\varphi^\omega_G}_t(\lambda_G^\omega(s)) = \Delta_G(s)^{it}\lambda_G^\omega(s) \qquad s \in G.
    \]  

Notice that if $\omega$ is continuous, then one can consider instead the compactly supported continuous functions with the product coming from the twisted convolution. When the 2-cocycle is cohomologous to the trivial 2-cocycle, this collapses to the group von Neumann algebra with its Plancherel weight.

\subsection{Central extension of 2-cocycle} In this subsection we remind the reader of the central extension associated to a fully normalized Borel 2-cocycle $\omega$ of a locally compact group $G$. That is, consider the Borel cocycle semidirect product $\mathbb{T}\rtimes_{(1,\omega)} G$ associated to the Borel cocycle action $(1,\omega)\colon G \curvearrowright \mathbb{T}$, where $1: G \to \Aut(\mathbb{T})$ is the trivial map (see \cite[Section 2]{Mac58} and \cite[Section 3]{EL69}). This is the set $\mathbb{T} \times G$ provided with the following operations 
    \[
        (a,s) (b,t) = (ab\omega(s,t),st) \qquad \text{ and } \qquad (a,s)^{-1} = (\overline{a},s^{-1})
    \]
and for any normalized Haar measure $\mu_{\mathbb{T}}$ on $\mathbb{T}$ and left Haar measure $\mu_G$ on $G$, there exists a unique topology on $\mathbb{T} \rtimes_{(1,\omega)}G$ such that it becomes a locally compact group with left Haar measure $\mu_\mathbb{T} \times \mu_G$ (see \cite[Theorem 7.1]{Mac57}). In particular, the modular function in this case is 
    \begin{equation}\label{eqp:modular_function_of_central_extension}
        \Delta_{\mathbb{T}\rtimes_{(1,\omega)} G}(a,s)= \Delta_G(s) \quad (a,s)\in \mathbb{T}\rtimes_{(1,\omega)} G.
    \end{equation}
In particular, $G$ is non-unimodular if and only if  $\mathbb{T}\rtimes_{(1,\omega)} G$ is also non-unimodular. By identifying $\mathbb{T} \cong \mathbb{T} \times e$ (as locally compact group), we have that $\mathbb{T}$ is a compact normal subgroup of $\mathbb{T}\rtimes_{(1,\omega)} G$. Thus $\mathbb{T}\rtimes_{(1,\omega)} G$ is a topological group central extension of $\mathbb{T}$ by $G$, since $\mathbb{T}$ is in the center of the extension. We also have that $(\mathbb{T}\rtimes_{(1,\omega)} G)/\mathbb{T} \cong G$ (as locally compact group). If the 2-cocycle over $\mathbb{T}$ is continuous, then the topology on the cocycle semidirect product is the product topology.

We define a projection $p_G$ on $L^2(\mathbb{T}\rtimes_{(1,\omega)} G)$ by
    \begin{equation}\label{eqn:projection_onto_the_twist}
        (p_{G} f)(a,s) = \int_\mathbb{T} b[\lambda_{\mathbb{T}\rtimes_{(1,\omega)} G}(b,e)^* f](a,s) d\mu_\mathbb{T}(b) \qquad f \in L^2(\mathbb{T} \rtimes_{(1,\omega)} G), (a,s) \in \mathbb{T}\rtimes_{(1,\omega)} G.
    \end{equation}
Since $\mathbb{T}$ is in the center of $\mathbb{T}\rtimes_{(1,\omega)} G$, we have that $p_{G}$ is a central projection in $L(\mathbb{T}\rtimes_{(1,\omega)} G)$. By a change of variables, $f \in p_{G} L^2(\mathbb{T}\rtimes_{(1,\omega)} G)$ if $f(a,s) = \overline{a}f(1,s)$. We define a unitary $U_{G}:p_{G} L^2(\mathbb{T}\rtimes_{(1,\omega)} G) \to  L^2(G)$ via 
    \begin{equation}\label{eqn:partial_isometry}
        (U_{G} f)(s)=f(1,s)        
    \end{equation}
satisfying
    \begin{equation}\label{eqn:quasi-equivalence_of_twisted_and_extension}
        U_{G}  \lambda_{\mathbb{T}\rtimes_{(1,\omega)} G}(a,s) = a\lambda_G^{\omega}(s) U_{G} \qquad (a,s) \in \mathbb{T}\rtimes_{(1,\omega)} G, 
    \end{equation}
and we extent it to a partial isometry on $L^2(\mathbb{T}\rtimes_{(1,\omega)} G)$. Observe that $(U^*_{G} f)(a,s)= \overline{a}f(s)$ for $f \in L^2(G)$ and $(a,s) \in \mathbb{T} \rtimes_{(1,\omega)} G$. The following result shows that we can witness $L_\omega(G)$ as a corner of $L(\mathbb{T} \rtimes_{(1,\omega)}G)$.

\begin{prop}[{\!\!\hspace{.1 cm}\cite[Proposition 2.5]{GG25}}]\label{prop:decomp_of_central_extension}
Let $\omega:G \times G \to \mathbb{T}$ be a normalized 2-cocycle of a locally compact second countable group $G$. Then there exists a homomorphism $\Omega_{G}:L(\mathbb{T}\rtimes_{(1,\omega)} G) \to  L_{\omega}(G)$ such that 
    \[
        \Omega_{G}(\lambda_{\mathbb{T}\rtimes_{(1,\omega)} G}(a,s)) = a\lambda^{\omega}_G(s) \qquad  (a,s)\in \mathbb{T}\rtimes_{(1,\omega)} G.
    \]
Furthermore, $\Omega_{G}$ restricted to $L(\mathbb{T}\rtimes_{(1,\omega)} G)p_{G}$ is an isomorphism and $\varphi_{\mathbb{T}\rtimes_{(1,\omega)} G}\circ \Omega_{G}^{-1}$ is the twisted Plancherel weight on $L_{\omega}(G)$, where $\varphi_{\mathbb{T}\rtimes_{(1,\omega)} G}$ is the Plancherel weight on $L(\mathbb{T}\rtimes_{(1,\omega)} G)$.
\end{prop}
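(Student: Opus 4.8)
The plan is to realize $\Omega_G$ as spatial conjugation by the partial isometry $U_G$ of \eqref{eqn:partial_isometry}, and to deduce every assertion from the intertwining relation \eqref{eqn:quasi-equivalence_of_twisted_and_extension} together with the fact that $p_G$ is a \emph{central} projection in $L(\mathbb{T}\rtimes_{(1,\omega)}G)$.

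First I would set $\Omega_G(x):=U_G x U_G^{*}$, recalling that $U_G$ is extended to a partial isometry with $U_GU_G^{*}=1_{L^2(G)}$ and $U_G^{*}U_G=p_G$. Conjugation is automatically normal, linear, $*$-preserving, and unital (since $\Omega_G(1)=U_GU_G^{*}=1$), so the only point requiring the group structure is multiplicativity, and this is exactly where centrality of $p_G$ enters: one has $\Omega_G(x)\Omega_G(y)=U_G x p_G y\,U_G^{*}$, and because $x p_G y = xy\,p_G$ and $p_G U_G^{*}=U_G^{*}$, this collapses to $\Omega_G(xy)$. The generator formula then follows at once from \eqref{eqn:quasi-equivalence_of_twisted_and_extension}, giving $\Omega_G(\lambda_{\mathbb{T}\rtimes_{(1,\omega)}G}(a,s))=a\lambda^\omega_G(s)U_GU_G^{*}=a\lambda^\omega_G(s)$, and normality forces the image into $\{a\lambda^\omega_G(s)\}''=L_\omega(G)$.

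For the corner statement, I would observe that $p_GU_G^{*}=U_G^{*}$ implies $\Omega_G(x)=\Omega_G(xp_G)$, so $\Omega_G$ factors through $L(\mathbb{T}\rtimes_{(1,\omega)}G)p_G$. Since $U_G$ is a genuine unitary from $p_GL^2(\mathbb{T}\rtimes_{(1,\omega)}G)$ onto $L^2(G)$, conjugation by it is a spatial $*$-isomorphism of the reduced algebra onto its (necessarily weakly closed) image; injectivity can also be seen directly by compressing $\Omega_G(xp_G)=0$ with $U_G^{*}(\cdot)U_G$ to obtain $xp_G=0$. The image is a von Neumann algebra containing every $a\lambda^\omega_G(s)$, hence equals $L_\omega(G)$, so $\Omega_G|_{L(\mathbb{T}\rtimes_{(1,\omega)}G)p_G}$ is an isomorphism onto $L_\omega(G)$.

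The remaining and most delicate point is the weight identity $\varphi_{\mathbb{T}\rtimes_{(1,\omega)}G}\circ\Omega_G^{-1}=\varphi^\omega_G$. I would test it on convolution elements: for a left convolver $F$ in the range of $p_G$, so that $F(a,s)=\overline a f(s)$ with $f=U_GF$, normality lets $\Omega_G$ pass through the defining integral, and the $\mathbb{T}$-average $\int_\mathbb{T}\overline a f(s)\,a\,d\mu_\mathbb{T}(a)=f(s)$ (using that $\mu_\mathbb{T}$ is normalized) yields $\Omega_G(\lambda_{\mathbb{T}\rtimes_{(1,\omega)}G}(F))=\lambda^\omega_G(f)$. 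Because $\|F\|_{L^2(\mu_\mathbb{T}\times\mu_G)}=\|f\|_{L^2(\mu_G)}$, the two Plancherel formulas assign the same value to $\lambda^\omega_G(f)^{*}\lambda^\omega_G(f)$ and its preimage. The main obstacle is promoting this agreement on generators to equality of weights; the clean way is to note that $U_G$ is precisely the canonical unitary carrying the GNS space $p_GL^2(\mathbb{T}\rtimes_{(1,\omega)}G)$ of the corner weight onto $L^2(G)=L^2(L_\omega(G),\varphi^\omega_G)$ and sending GNS vectors $F\mapsto f$, so that the two faithful normal semifinite weights share identical GNS data and therefore coincide. I expect the measure-theoretic bookkeeping of matching the left convolvers on $\mathbb{T}\rtimes_{(1,\omega)}G$ with the twisted left convolvers on $G$ under $U_G$ to be the only genuinely technical step.
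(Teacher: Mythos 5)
Your proposal is correct, and it is essentially the intended argument: the paper itself does not prove this statement (it is quoted from [GG25]), but the surrounding preliminaries introduce $p_G$, $U_G$, and the intertwining relation \eqref{eqn:quasi-equivalence_of_twisted_and_extension} precisely so that $\Omega_G=U_G(\cdot)U_G^*$ does the job, with centrality of $p_G$ giving multiplicativity exactly as you argue. The one step you defer --- matching convolvers under $U_G$ --- is genuinely routine: a direct computation shows $U_G(F*\Xi)=(U_GF)*_\omega(U_G\Xi)$ and $U_G(F^\sharp)=(U_GF)^\sharp$ on the corner $p_GL^2(\mathbb{T}\rtimes_{(1,\omega)}G)$, so $U_G$ is an isomorphism of the relevant left Hilbert algebras and the two natural weights coincide as you claim.
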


\section{Decomposition of twisted group von Neumann algebra}\label{sec:decomp_of_twisted_gp_vNa}
We show that the twisted group von Neumann algebra $L_\omega(G)$ of an almost unimodular group $G$ with a 2-cocycle $\omega$ can be witness as a twisted crossed product associated to a cocycle action of $\Delta_G(G)$ on $L_\omega(\ker\Delta_G)$ and the basic construction for the inclusion $L_\omega(\ker\Delta_G) \leq L_\omega(G)$ can be represented as a twisted group von Neumann algebra of the group $\Delta_G(G)\hat{\ } \times G$. Towards that end, we first remind the reader of cocycle semidirect products of locally compact groups and twisted crossed products of von Neumann algebras by locally compact groups.

Recall that for locally compact groups $H$ and $N$, a continuous cocycle action $(\alpha, c): H \curvearrowright N$ is a pair of maps $\alpha: H \to \Aut(N)$ and $c: H \times H \to N$ satisfying the relations 
    \begin{align}\label{eqn:relations_of_cocycle_LCGs}
        \alpha_s\alpha_t = \Ad{c(s,t)} \alpha_{st} \qquad \text{ and } \qquad c(s,t)c(st,r) = \alpha_s(c(t,r)) c(s,tr) \qquad s,t,r\in H,
    \end{align}
and the maps $H\times N\ni (s,x)\mapsto \alpha_s(x)\in N$ and $H\times H\ni (s,t)\mapsto c(s,t)\in N$ are continuous (where the products are equipped with the product topology). We can (and will) always normalize the $2$-cocycle so that $c(s,e)=c(e,s) =e$ for all $s \in H$. In this case, the cocycle semidirect product of this action is a locally compact group, denoted by $N \rtimes_{(\alpha,c)} H$, consisting of the set $N \times H$ equipped with the product topology and the following group operations
    \[
        (x,s)(y,t) = (x\alpha_s(y)c(s,t),st), \qquad \text{ and } \qquad (x,s)^{-1}=(\alpha_s^{-1}(x^{-1} c(s,s^{-1})^{-1}),s^{-1}),
    \]
where $(x,s), (y,t) \in N \rtimes_{(\alpha,c)} H$.  Note that a left Haar measure for this group is given by the Radon product $\mu_N \hat{\times} \mu_H$ for any left Haar measures $\mu_H$ and $\mu_N$ of $H$ and $N$, respectively. 

Recall that for a von Neumann algebra $M$ and a locally compact group $H$, a (Borel) cocycle action $(\check{\alpha},u): H \curvearrowright M$ is a pair of Borel maps $\check{\alpha}: H \to \Aut(M)$ and $u: H \times H \to M^U$ satisfying the relations
    \begin{align}\label{eqn:relations_of_cocycle_vNa_and_LCG}
        \check{\alpha}_s\check{\alpha}_t = \Ad{u(s,t)} \check{\alpha}_{st}, \qquad  u(s,t)u(st,r) = \check{\alpha}_s(u(t,r)) u(s,tr), \qquad \text{ and } \quad u(s,e) = u(e,s) =1,
    \end{align}
where $s,t,r\in H$ and $M^U$ is the set of unitaries in $M$. Here the Borel structure on $\Aut(M)$ is generated by the topology of pointwise norm convergence against $M_*$ and on $M^U$ is generated by the strong$^*$-topology on $M$. In this case when $M \subset B(\H)$ for some Hilbert space $\H$, the left twisted crossed product $M \rtimes_{(\check{\alpha},u)} H$ of $M$ by $H$ is the von Neumann algebra on $L^2(H,\H)$ generated by the operators
    \begin{align*}
        (I^{\check{\alpha}}(x) \xi)(s) &= \check{\alpha}_{s^{-1}}(x)\xi(s),\\
        (\lambda^u_H(t)\xi)(s) &= u(s^{-1},t) \xi(t^{-1}s),
    \end{align*}
where $x \in M$, $s,t \in H$, and $\xi \in L^2(H,\H)$. By \cite[Proposition 2.3]{Sut80}, the twisted crossed product is independent of the representation of $M$.
    
\begin{thm}\label{thm:cocycle_action_decomp}
Let $G$ be a second countable almost unimodular group with a 2-cocycle $\omega:G \times G \to \mathbb{T}$. Then there exists a cocycle action $(\check{\alpha}^\omega,u^\omega): \Delta_G(G) \curvearrowright L_\omega(\ker\Delta_G)$ such that $L_\omega(G) \cong L_\omega(\ker\Delta_G) \rtimes_{(\check{\alpha}^\omega,u^\omega)} \Delta_G(G)$. Furthermore, the twisted Plancherel weight on $L_\omega(G)$ is the dual weight associated to the twisted Plancherel weight on $L_\omega(\ker\Delta_G)$ .
\end{thm}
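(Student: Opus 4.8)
The plan is to exploit the short exact sequence $1 \to G_1 \to G \xrightarrow{\Delta_G} \Delta_G(G) \to 1$, where $G_1 := \ker\Delta_G$. Since $G$ is almost unimodular, $G_1$ is open, so $G/G_1 \cong \Delta_G(G)$ carries the discrete topology, and $G_1$ is unimodular (its modular function is $\Delta_G|_{G_1}\equiv 1$); in particular $\varphi^\omega_{G_1}$ is a trace on $L_\omega(G_1)$. We may assume $\omega$ is fully normalized, so that $\lambda^\omega_G(s)\lambda^\omega_G(t) = \omega(s,t)\lambda^\omega_G(st)$ and $\lambda^\omega_G(s)^* = \lambda^\omega_G(s^{-1})$. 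Because $\Delta_G(G)$ is discrete we may fix a section $\delta \mapsto g_\delta \in G$ with $\Delta_G(g_\delta) = \delta$ and $g_1 = e$, and record the resulting $G_1$-valued group cocycle $n(\delta,\delta') \in G_1$ defined by $g_\delta g_{\delta'} = n(\delta,\delta') g_{\delta\delta'}$. I will use throughout the identification of $\set{\lambda^\omega_G(h) : h \in G_1}''$ with $L_\omega(G_1)$ coming from $G_1$ being open.

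First I would construct the cocycle action. Since $g_\delta$ normalizes $G_1$, the projective relations show that $\check\alpha^\omega_\delta := \Ad{\lambda^\omega_G(g_\delta)}\big|_{L_\omega(G_1)}$ is an automorphism of $L_\omega(G_1)$, as $\check\alpha^\omega_\delta(\lambda^\omega_G(h)) = \omega(g_\delta,h)\,\omega(g_\delta h, g_\delta^{-1})\,\lambda^\omega_G(g_\delta h g_\delta^{-1})$ with $g_\delta h g_\delta^{-1}\in G_1$. For the $2$-cocycle I would set
    \[
        u^\omega(\delta,\delta') := \omega(g_\delta,g_{\delta'})\,\overline{\omega(n(\delta,\delta'), g_{\delta\delta'})}\,\lambda^\omega_G(n(\delta,\delta')) \in L_\omega(G_1),
    \]
which is the unitary determined by $\lambda^\omega_G(g_\delta)\lambda^\omega_G(g_{\delta'}) = u^\omega(\delta,\delta')\,\lambda^\omega_G(g_{\delta\delta'})$. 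The first relation in \eqref{eqn:relations_of_cocycle_vNa_and_LCG} then reads $\check\alpha^\omega_\delta\check\alpha^\omega_{\delta'} = \Ad{u^\omega(\delta,\delta')}\check\alpha^\omega_{\delta\delta'}$ and follows by applying $\Ad{\cdot}$ to this identity, while the $2$-cocycle identity for $u^\omega$ and its normalization drop out of the associativity of $\lambda^\omega_G(g_\delta)\lambda^\omega_G(g_{\delta'})\lambda^\omega_G(g_{\delta''})$ together with $g_1=e$. The required Borel measurability of $\check\alpha^\omega$ and $u^\omega$ is automatic since $\Delta_G(G)$ is discrete.

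Next I would realize the isomorphism concretely. Decomposing $G$ into the open left cosets $\set{g_\delta G_1}_\delta$ identifies $L^2(G) \cong \ell^2(\Delta_G(G), L^2(G_1))$, and since left translation is measure preserving the map $W\colon L^2(G)\to \ell^2(\Delta_G(G), L^2(G_1))$, $(Wf)(\delta)(h) = f(g_\delta h)$, is a unitary. The crux is to verify by direct computation with the projective relations that $W$ intertwines the two generating sets, namely $W\lambda^\omega_G(h)W^* = I^{\check\alpha^\omega}(\lambda^\omega_{G_1}(h))$ for $h\in G_1$ and $W\lambda^\omega_G(g_\delta)W^* = \lambda^{u^\omega}_{\Delta_G(G)}(\delta)$. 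I expect this intertwining to be the main obstacle: on each coset block one must track the cocycle factors $\omega(\cdot,\cdot)$, the conjugations $g_{\delta'}^{-1}(\cdot)g_{\delta'}$ and the group cocycle $n(\delta,\delta')$, and check that they reassemble exactly into the formulas $(I^{\check\alpha^\omega}(x)\xi)(\delta) = \check\alpha^\omega_{\delta^{-1}}(x)\xi(\delta)$ and $(\lambda^{u^\omega}_{\Delta_G(G)}(\delta)\xi)(\delta') = u^\omega((\delta')^{-1},\delta)\xi(\delta^{-1}\delta')$. Since $\set{\lambda^\omega_G(h): h\in G_1}\cup\set{\lambda^\omega_G(g_\delta): \delta\in\Delta_G(G)}$ generates $L_\omega(G)$ (every $g$ factors as $g_\delta h$) and the two images generate the twisted crossed product, this yields $L_\omega(G)\cong L_\omega(G_1)\rtimes_{(\check\alpha^\omega, u^\omega)}\Delta_G(G)$.

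For the weight statement, I would first note that $\sigma^{\varphi^\omega_G}_t(\lambda^\omega_G(s)) = \Delta_G(s)^{it}\lambda^\omega_G(s)$ fixes $\lambda^\omega_G(s)$ exactly when $s\in G_1$, whence the centralizer is $L_\omega(G)^{\varphi^\omega_G} = L_\omega(G_1)$ and $\varphi^\omega_G|_{L_\omega(G_1)} = \varphi^\omega_{G_1}$. By almost unimodularity $\varphi^\omega_G$ is strictly semifinite (\cite[Theorem 2.1]{GG25}), so there is a faithful normal conditional expectation $\E_{\varphi^\omega_G}\colon L_\omega(G)\to L_\omega(G_1)$ with $\varphi^\omega_G = \varphi^\omega_{G_1}\circ \E_{\varphi^\omega_G}$; moreover $\E_{\varphi^\omega_G}$ annihilates $\lambda^\omega_G(s)$ whenever $\Delta_G(s)\neq 1$. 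Under $W$ this expectation is therefore carried to the canonical conditional expectation of the discrete twisted crossed product onto $I^{\check\alpha^\omega}(L_\omega(G_1))$, which kills each $\lambda^{u^\omega}_{\Delta_G(G)}(\delta)$ with $\delta\neq 1$; as the dual weight $\widetilde{\varphi^\omega_{G_1}}$ is by construction $\varphi^\omega_{G_1}$ composed with precisely this canonical expectation (see \cite{Sut80}), comparing the two normal weights on the generators gives $\varphi^\omega_G = \widetilde{\varphi^\omega_{G_1}}$, completing the proof.
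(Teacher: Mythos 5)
Your algebraic construction of $(\check\alpha^\omega,u^\omega)$ is correct: with $\omega$ fully normalized, $\check\alpha^\omega_\delta=\Ad{\lambda^\omega_G(g_\delta)}$ preserves $\{\lambda^\omega_G(h):h\in G_1\}''$, your formula for $u^\omega(\delta,\delta')$ is exactly $\lambda^\omega_G(g_\delta)\lambda^\omega_G(g_{\delta'})\lambda^\omega_G(g_{\delta\delta'})^*$, and the cocycle relations follow from associativity. This is also a genuinely different route from the paper, which passes to the central extension $\mathbb{T}\rtimes_{(1,\omega)}G$, quotes the untwisted decomposition of $L(\mathbb{T}\rtimes_{(1,\omega)}G)$ (Sutherland, via \cite[Proposition 4.1]{Miy25}), and compresses by the central projection $p_G$. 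However, the step you yourself flag as the crux --- the intertwining by $W$ --- is false as stated, and this is a genuine gap. For $h\in G_1$ one computes $(W\lambda^\omega_G(h)W^*\xi)(\delta)(k)=\omega(k^{-1}g_\delta^{-1},h)\,\xi(\delta)\big((g_\delta^{-1}hg_\delta)^{-1}k\big)$, which, writing $h'=g_\delta^{-1}hg_\delta$, is $M_\theta\,\lambda^\omega_{G_1}(h')$ with $M_\theta$ the multiplication operator by $\theta(k)=\omega(k^{-1}g_\delta^{-1},h)\,\overline{\omega(k^{-1},h')}$. By contrast, $\check\alpha^\omega_{\delta^{-1}}(\lambda^\omega_{G_1}(h))$ is a \emph{scalar} multiple of $\lambda^\omega_{G_1}(g_{\delta^{-1}}hg_{\delta^{-1}}^{-1})$. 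Even after choosing the section inverse preserving, so that $g_{\delta^{-1}}=g_\delta^{-1}$ and the two translation parts agree (without this they already differ, consistently with the paper's isomorphism sending $\lambda^{u^\omega}_D(\delta)$ to $\lambda^\omega_G(\sigma(\delta^{-1})^{-1})$ rather than to $\lambda^\omega_G(\sigma(\delta))$), two applications of the cocycle identity together with full normalization show that the desired equality is equivalent to $\omega(x,g_\delta^{-1})=1$ for all $x\in G_1$ and all $\delta$. That condition is not implied by full normalization and fails in general (e.g.\ for a cocycle pulled back from a Heisenberg-type pairing between a direction in $G_1$ and $\Delta_G(G)$); the same defect occurs in your formula for $W\lambda^\omega_G(g_\delta)W^*$. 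The root cause is that $W$ is built from untwisted left translations $f\mapsto f(g_\delta\,\cdot)$, so on each coset block it conjugates $\lambda^\omega_G(h)$ only onto a unitary conjugate $c_\delta\lambda^\omega_{G_1}(h')c_\delta^*$ with $c_\delta$ not in $L_\omega(G_1)$; hence $WL_\omega(G)W^*$ need not even contain the standard copy of $L_\omega(G_1)$. In the untwisted case $\omega\equiv 1$ (with inverse preserving section) your formulas are correct, which is why the claim looks plausible: the failure is purely a cocycle-phase phenomenon.

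The gap can be repaired in two ways. Spatially, replace $W$ by a unitary built from the twisted \emph{right} regular representation: the operators $\rho^{\overline{\omega}}_G(g_\delta)$ commute with $L_\omega(G)$, carry $L^2(G_1g_\delta)$ onto $L^2(G_1)$, and supply precisely the phases $\overline{\omega(k,g_\delta)}$ and modular factors $\delta^{1/2}$ that your $W$ omits; after a further reparametrization (the analogue of the unitary $V$ in the paper's proof) one lands on the paper's conventions for $I^{\check\alpha^\omega}$ and $\lambda^{u^\omega}_D$. Alternatively, drop the spatial computation and argue abstractly: your unitaries $\lambda^\omega_G(g_\delta)$ normalize $L_\omega(G_1)$, satisfy $\lambda^\omega_G(g_\delta)\lambda^\omega_G(g_{\delta'})=u^\omega(\delta,\delta')\lambda^\omega_G(g_{\delta\delta'})$, generate $L_\omega(G)$ together with $L_\omega(G_1)$, and are annihilated for $\delta\neq 1$ by the faithful normal $\varphi^\omega_G$-preserving expectation onto the centralizer $L_\omega(G_1)$ (which exists by strict semifiniteness, \cite[Theorem 2.1]{GG25}); a Fourier-coefficient/GNS argument then identifies $L_\omega(G)$ with $L_\omega(G_1)\rtimes_{(\check\alpha^\omega,u^\omega)}\Delta_G(G)$ and $\varphi^\omega_G$ with the dual weight. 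But this characterization of discrete twisted crossed products is essentially the content of the theorem, so it must be proved rather than asserted; as written, both your isomorphism and your final weight paragraph rest on the invalid intertwining.
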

\begin{proof}
Assume that $\omega$ is fully normalized (see discussion before \cite[Lemma 2]{Kle74} and \cite[Proposition 2.4]{Sut80}). Denote $G_1:=\ker\Delta_G$, $D:=\Delta_G(G)$, $G(\omega) := \mathbb{T}\rtimes_{(1,\omega)} G$, and $G_1(\omega) : = \mathbb{T}\rtimes_{(1,\omega)} G_1$. Let $\sigma: D \to G$ be a normalized section for $\Delta_G:G \to D$; that is, $\sigma(1)=e$ and $\Delta_G \circ\sigma(\delta)=\delta$ for all $\delta \in D$. Since $\Delta_{G(\omega)}(G(\omega)) = D$ and $\ker\Delta_{G(\omega)} = G_1(\omega)$, we defined a normalized section $\overline{\sigma}: D\to G(\omega)$ for $\Delta_{G(\omega)}: G(\omega) \to D$ via $\overline{\sigma}(\delta) := (1,\sigma(\delta))$. 

Then map $((a,s),\delta) \mapsto (a,s) \overline{\sigma}(\delta)$ is an isomorphism of locally compact groups between $ G_1(\omega) \rtimes_{(\alpha,c)}D$ to $G(\omega)$, where $\alpha: D \to \Aut(G_1(\omega))$ and $c:D \times D \to G_1(\omega)$ are defined by $\alpha_\delta(a,s):=\overline{\sigma}(\delta)(a,s)\overline{\sigma}(\delta)^{-1}$ and $c(\delta_1,\delta_2) := \overline{\sigma}(\delta_1)\overline{\sigma}(\delta_2)\overline{\sigma}(\delta_1\delta_2)^{-1}$ for $\delta,\delta_1,\delta_2 \in D$ and $(a,s) \in G_1(\omega)$.

By \cite[Proposition 3.1.7]{Sut80}, there exists a cocycle action $(\check{\alpha},u)$ of $D$ on $L(G_1(\omega))$, which depends on $(\alpha,c)$, such that $L(G(\omega)) \cong L(G_1(\omega)) \rtimes_{(\check{\alpha},u)} D$. To give the explicit mapping for the case of the twisted case, we follow the proof of \cite[Proposition 4.1]{Miy25}. For each $\delta \in D$, defined a unitary $v_\delta$ on $L^2(G_1(\omega))$ by
    \[
        (v_\delta f)(a,s) := \delta^{-1/2}f(\alpha_{\delta^{-1}}(a,s)), 
    \]
where $f \in L^2(G_1(\omega))$ and $(a,s) \in G_1(\omega)$. Then the cocycle action $(\check{\alpha},u)$ of $D$ is given by 
    \begin{align*}
        \check{\alpha}_\delta(x) &:= v_\delta xv_\delta^*,\\
        u(\delta_1,\delta_2) &:= \lambda_{G_1(\omega)}(\overline{\sigma}(\delta_1^{-1})^{-1}\overline{\sigma}(\delta_2^{-1})^{-1} \overline{\sigma}(\delta_1^{-1}\delta_2^{-1})),
    \end{align*}
where $\delta, \delta_1,\delta_2 \in D$, and $(a,s) \in G_1(\omega)$. Note that $\check{\alpha}_\delta\left(\lambda_{G_1(\omega)}(a,s)\right) =\lambda_{G_1(\omega)}(\alpha_{\delta^{-1}}^{-1}(a,s))$ for $(a,s) \in G_1(\omega)$.  We identifying $L^2(G(\omega)) \cong L^2(G_1(\omega)\times D)$, and note that $\delta d\mu_{G(\omega)}(a,s)d\mu_D(\delta)$ is a left Haar measure on $G_1(\omega) \rtimes_{(\alpha,c)} D$, where $\mu_D$ is the counting measure. Define a unitary $V$ on $L^2(G_1(\omega)\times D)$ to $L^2(G_1(\omega) \rtimes_{(\alpha, c)} D)$ by
    \[
        (V\xi)((a,s),\delta)= \xi (\alpha_{\delta}^{-1}(a,s),\delta),
    \]
where $((a,s),\delta) \in G_1(\omega) \times D$. For each $(a,s) \in G_1(\omega)$ and $\delta \in D$, the proof of \cite[Proposition 4.1]{Miy25} gives us
    \begin{align*}
        V I^{\check{\alpha}}(\lambda_{G_1(\omega)}(a,s))V^* &= \lambda_{G(\omega)}( (a,s),1),\\
        V\lambda^{u}_D(\delta)V^* &= \lambda_{G(\omega)}(\overline{\sigma}(\delta^{-1})^{-1} \overline{\sigma}(\delta)^{-1},\delta).
    \end{align*}

Keeping the identification of $G(\omega)\cong G_1( \omega) \rtimes_{(\alpha, c)} D$, the homomorphism $\Omega_{G} :L(G(\omega)) \to L_\omega(G)$ of Proposition \ref{prop:decomp_of_central_extension} becomes $\Omega_{G}(\lambda_{G(\omega)}((a,s), \delta)) = \Omega_{G}(\lambda_{G(\omega)}(a\omega(s,\sigma(\delta)),s\sigma(\delta))= a\lambda_G^\omega(s)\lambda^\omega_G(\sigma(\delta))$, where  $((a,s),\delta) \in G(\omega)$. Thus we obtain a cocycle action $(\check{\alpha}^\omega, u^\omega): D \curvearrowright L_\omega(G_1)$ defined as follows 
    \begin{align*}
        \check{\alpha}^\omega_\delta &:= \Omega_{G_1} \circ \check{\alpha}_\delta \circ \Omega_{G_1}^{-1}, \\
        u^\omega(\delta_1,\delta_2) &:= \Omega_{G_1}(u(\delta_1,\delta_2)), 
    \end{align*}
where $\delta, \delta_1,\delta_2 \in D$, and $\Omega_{G_1}: L(G_1(\omega)) \to L_\omega(G_1)$ is the homomorphism of Proposition \ref{prop:decomp_of_central_extension}. Since $\check{\alpha}_\delta(\lambda_{G_1(\omega)}(a,e)) = \lambda_{G_1(\omega)}(a,e)$ and $V I^{\check{\alpha}}(\lambda_{G_1(\omega)}(a,e))V^* =\lambda_{G(\omega)}(a,e)$ for all $a \in \mathbb{T}$ and $\delta \in D$, it follows that for any $s \in G_1$ and $\delta\in D$
    \begin{align*}
        V (U_{G_1}^* \otimes 1) (I^{\check{\alpha}^\omega}(\lambda_{G_1}^\omega(s)))(U_{G_1} \otimes 1)V^* &=  V I^{\check{\alpha}}(\lambda_{G_1(\omega)}(1,s) p_{G_1})V^*=  \lambda_{G(\omega)}((1,s),1)p_{G},\\
        V(U_{G_1}^* \otimes 1)(\lambda^{u^\omega}_D(\delta))(U_{G_1} \otimes 1)V^* &=\lambda_{G(\omega)}\big(\overline{\sigma}(\delta^{-1})^{-1}\overline{\sigma}(\delta)^{-1}, \delta\big)p_G,
    \end{align*}
where $p_{G_1}$ (resp. $p_G$) and $U_{G_1}$ (resp. $U_G$) are the central projection given by equation (\ref{eqn:projection_onto_the_twist}) and the partial isometry given by equation (\ref{eqn:partial_isometry}) associated to $G_1(\omega)$ (resp. $G(\omega)$), respectively. Therefore, $\Omega_{G} \circ \Ad{V} \circ \Ad{U^* \otimes 1}$ gives us a normal, unital $*$-isomorphism from $L_\omega(G)$ to $L_\omega(G_1) \rtimes_{(\check{\alpha}^\omega,u^\omega)} D$ such that  $I^{\check{\alpha}^\omega}(\lambda_{G_1}^\omega(s)) \mapsto \lambda_G^\omega(s)$ and $\lambda^{u^\omega}_D(\delta) \mapsto  \lambda^\omega_G(\sigma(\delta^{-1})^{-1}) $ for any $s \in G_1$ and $\delta \in D$.
\end{proof}

Next, we show that the basic construction of the inclusion $L_\omega(\ker\Delta_G) \leq L_\omega(G)$ has several presentations. Similar to \cite[Theorem 5.1]{GGLN25}, one comes from the dual action of $(\alpha^\omega,u^\omega)$, which is the point modular extension of the modular automorphism group associated to $\varphi^\omega_G$ (see also \cite[Corollary 4.3]{Miy25}). Additionally, it can be presented as another twisted group von Neumann algebra on the group $\Delta_G(G)\hat{\ } \times G$, similar to \cite[Theorem 2.2]{GG25}. But first we remind the reader of dual actions and the point modular extension. 

Recall that for a cocycle action $(\check{\alpha},u)$ of a locally compact abelian group $H$ on a von Neumann algebra $M$ and $\gamma \in \widehat{H}$, we defined a unitary $\mu(\gamma)$ on $L^2(H, \mathcal{H})$ by 
    \[
        (\mu(\gamma)\xi)(s)=\overline{(\gamma \mid s)}\xi(s) \qquad \xi\in L^2(H, \mathcal{H}), s\in H,
    \]
where $(\cdot\mid\cdot) : \widehat{H} \times H \to \mathbb{T}$ will always denote dual pairings between locally compact abelian groups. Then $\hat{\alpha}_\gamma=\text{Ad}\mu(\gamma)$ defines a continuous action of $\widehat{H}$ on $M \rtimes_{(\check{\alpha},u)}H$ which satisfies 
\begin{align*}
    \hat{\alpha}_\gamma(I^{\check{\alpha}}(x)) &= I^{\check{\alpha}}(x), \\
    \hat{\alpha}_\gamma(\lambda_H^u(t)) &= \overline{ ( \gamma \mid t)}\lambda_H^u(t),
\end{align*}
where $x \in M, t \in H$ and $\gamma \in \widehat{H}$ (see \cite[II, P150]{Sut80}). This action is called the \textit{dual action} of $(\check{\alpha},u)$. 

When $G$ is almost unimodular, and equivalently $\varphi^\omega_G$ is almost periodic, the modular automorphism group $\sigma^{\varphi^\omega_G}\colon \R\curvearrowright L_\omega(G)$ admits an extension, called the \emph{point modular extension} of $\sigma^{\varphi^\omega_G}$, $\widetilde{\sigma}\colon \Delta_G(G)\hat{\ } \curvearrowright L_\omega(G)$ satisfying $\widetilde{\sigma}_{\hat{\iota}(t)} = \sigma_t^{\varphi^\omega_G}$ for all $t\in \R$, where $\hat{\iota}\colon \R\to \Delta_G(G)\hat{\ }$ is the map dual to the inclusion map $\iota\colon \Delta_G(G) \hookrightarrow \R_+$; that is,
    \begin{equation}\label{eqn:transpose_formula}
        (\hat{\iota}(t)\mid \delta) = ( t \mid  \iota(\delta)) = \delta^{it} \qquad t\in \R,\ \delta\in \Delta_G(G)
    \end{equation}
(see \cite[Section 1.4]{GGLN25} for more details).

\begin{cor}\label{cor:forms_of_the_twisted_basic_construction}
Let $G$ be a second countable almost unimodular group with a 2-cocycle $\omega:G \times G \to \mathbb{T}$, let $\varphi^\omega_G$ be a twisted Plancherel weight on $L_\omega(G)$ and let $\widetilde{\sigma}\colon \Delta_G(G)\hat{\ }\curvearrowright L_\omega(G)$ be the point modular extension of $\sigma^{\varphi^\omega_G}\colon \R\curvearrowright L_\omega(G)$. Then one has 
\[
    \langle L_\omega(G), e_{\varphi^\omega_G}\rangle \cong L_\omega(G) \rtimes_{\widetilde{\sigma}}\Delta_G(G)\hat{\ } \cong L_\omega(\ker\Delta_G) \bar\otimes B(\ell^2\Delta_G(G)) \cong L_{\widetilde{\omega}}(\Delta_G(G)\hat{\ }\times G),
\]
where $\widetilde{\omega}$ is a 2-cocycle on $\Delta_G(G)\hat{\ } \times G$ defined by
    \[
        \widetilde{\omega}\big( (\gamma_1,s_1)(\gamma_2,s_2)\big) := \overline{(\gamma_2\mid\Delta_G(s_1))}\omega(s_1,s_2)
    \]
where $(\cdot\mid \cdot)\colon \Delta_G(G)\hat{\ }\times \Delta_G(G)\to \mathbb{T}$ is the dual pairing and $(\gamma_1,s_1), (\gamma_2,s_2) \in \Delta_G(G)\hat{\ }\times G$.
\end{cor}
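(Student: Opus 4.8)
The plan is to treat the three isomorphisms in turn, writing $G_1:=\ker\Delta_G$ and $D:=\Delta_G(G)$, and to lean on the decomposition already obtained in Theorem~\ref{thm:cocycle_action_decomp}. First I would record that $\varphi^\omega_G$ is almost periodic with $\Sd(\varphi^\omega_G)=D$ and centralizer $L_\omega(G)^{\varphi^\omega_G}=L_\omega(G_1)$: indeed, the modular operator acts on $L^2(G)$ by $(\Delta_{\varphi^\omega_G}f)(s)=\Delta_G(s)f(s)$, which is diagonalized by the eigenspaces $L^2(\{s:\Delta_G(s)=\delta\})$ indexed by $\delta\in D$ (these are nonzero because the coset $\{s:\Delta_G(s)=\delta\}$ is open, as $G_1$ is open), and $\sigma^{\varphi^\omega_G}_t(\lambda^\omega_G(s))=\Delta_G(s)^{it}\lambda^\omega_G(s)$ fixes exactly $\lambda^\omega_G(G_1)$. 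Consequently $e_{\varphi^\omega_G}$ is the Jones projection of the canonical conditional expectation $\E_{\varphi^\omega_G}\colon L_\omega(G)\to L_\omega(G_1)$, and the first isomorphism $\langle L_\omega(G),e_{\varphi^\omega_G}\rangle\cong L_\omega(G)\rtimes_{\widetilde\sigma}\widehat D$ is the general identification of the basic construction of an almost periodic weight with the crossed product by its point modular extension, exactly as in \cite[Theorem 5.1]{GGLN25}.

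For the second isomorphism I would first identify $\widetilde\sigma$ with the dual action $\hat\alpha$ of the cocycle action $(\check\alpha^\omega,u^\omega)$ from Theorem~\ref{thm:cocycle_action_decomp}, which is the content noted in \cite[Corollary 4.3]{Miy25}. By \eqref{eqn:transpose_formula} and strong continuity (using that $\hat\iota(\R)$ is dense in $\widehat D$), the point modular extension acts on generators by $\widetilde\sigma_\gamma(\lambda^\omega_G(s))=(\gamma\mid\Delta_G(s))\lambda^\omega_G(s)$. Under the isomorphism of Theorem~\ref{thm:cocycle_action_decomp}, which sends $I^{\check\alpha^\omega}(\lambda^\omega_{G_1}(s))\mapsto\lambda^\omega_G(s)$ and $\lambda^{u^\omega}_D(\delta)\mapsto\lambda^\omega_G(\sigma(\delta^{-1})^{-1})$ with $\Delta_G(\sigma(\delta^{-1})^{-1})=\delta$, this matches the dual action $\hat\alpha_\gamma$ (which fixes $I^{\check\alpha^\omega}(L_\omega(G_1))$ and scales $\lambda^{u^\omega}_D(\delta)$ by $\overline{(\gamma\mid\delta)}$) up to the inversion automorphism $\gamma\mapsto\gamma^{-1}$ of $\widehat D$, a relabeling that does not change the crossed product. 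Then, using that $G_1$ being open forces $D\cong G/G_1$ to be discrete (so $\mu_D$ is the counting measure and $L^2(D)=\ell^2(D)$), Takesaki duality for twisted crossed products \cite{Sut80} gives $L_\omega(G)\rtimes_{\widetilde\sigma}\widehat D\cong(L_\omega(G_1)\rtimes_{(\check\alpha^\omega,u^\omega)}D)\rtimes_{\hat\alpha}\widehat D\cong L_\omega(G_1)\bar\otimes B(\ell^2(D))$.

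For the third isomorphism I would first check that $\widetilde\omega$ is a genuine $2$-cocycle on the direct product group $\widehat D\times G$: the factor $\overline{(\gamma_2\mid\Delta_G(s_1))}$ is the bicharacter cocycle attached to the pairing of $\widehat D$ with $G$ through the homomorphism $\Delta_G$, which satisfies the cocycle identity because $\Delta_G$ is a homomorphism and the pairing is bimultiplicative, and multiplying by the pullback of $\omega$ preserves the cocycle relation. I would then define $\Psi(\lambda^{\widetilde\omega}_{\widehat D\times G}(\gamma,s)):=\lambda_{\widehat D}(\gamma)I(\lambda^\omega_G(s))$, where $I$ is the canonical embedding of $L_\omega(G)$ into $L_\omega(G)\rtimes_{\widetilde\sigma}\widehat D$ and the $\lambda_{\widehat D}(\gamma)$ are the implementing unitaries. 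Commuting $I(\lambda^\omega_G(s_1))$ past $\lambda_{\widehat D}(\gamma_2)$ produces the scalar $\overline{(\gamma_2\mid\Delta_G(s_1))}$, and combining this with $I(\lambda^\omega_G(s_1))I(\lambda^\omega_G(s_2))=\omega(s_1,s_2)I(\lambda^\omega_G(s_1s_2))$ shows that the $\Psi$-images multiply exactly by $\widetilde\omega$, so they form a $\widetilde\omega$-projective representation of $\widehat D\times G$ generating the crossed product; an analogous computation handles the involution. To upgrade this generator-level match to a spatial isomorphism, I would verify that $\Psi$ intertwines the twisted Plancherel weight of $L_{\widetilde\omega}(\widehat D\times G)$ with the dual weight on the crossed product (which is the twisted Plancherel weight transported through Theorem~\ref{thm:cocycle_action_decomp}), following the approach of \cite[Theorem 2.2]{GG25}.

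I expect the main obstacle to be the bookkeeping in the second and third steps rather than any single deep point: getting the conventions for the dual pairing, the section $\sigma$, and the inversion on $\widehat D$ to line up so that $\widetilde\sigma$ is genuinely the dual action, and, more substantively, promoting the generator-level identity $\Psi$ from a $*$-isomorphism of the dense twisted convolution algebras to a normal isomorphism of the von Neumann algebras, which requires matching the canonical weights (equivalently, checking that $\Psi$ is implemented by a unitary of the underlying standard Hilbert spaces). An alternative that sidesteps some of this is to pass to the central extension $\mathbb{T}\rtimes_{(1,\omega)}G$ and deduce everything from the untwisted statements of \cite{Miy25} by compressing with the central projection $p_G$ of Proposition~\ref{prop:decomp_of_central_extension}.
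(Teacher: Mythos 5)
Your architecture matches the paper's for the second and third isomorphisms. For the second, the paper does exactly what you propose: it computes $\widetilde{\sigma}_\gamma(\lambda^\omega_G(s)) = (\Delta_G(s)\mid \gamma)\,\lambda^\omega_G(s)$ using density of $\hat{\iota}(\R)$ in $\Delta_G(G)\hat{\ }$, identifies $\widetilde{\sigma}$ with the dual action of $(\check{\alpha}^\omega,u^\omega)$, and applies twisted duality (citing \cite[Theorem 2]{NS79} rather than \cite{Sut80}); your explicit handling of the inversion $\gamma\mapsto\gamma^{-1}$ addresses a point the paper glosses over, and you are right that it is harmless. For the third, the paper uses precisely your map $\Psi$ on generators and simply asserts that it gives the desired normal $*$-isomorphism; your concern about upgrading the generator-level identity is legitimate, and can be settled either by your dual-weight matching or, more quickly, by noting that $\Psi$ is spatial: the multiplication unitary $W$ on $L^2(\Delta_G(G)\hat{\ }\times G)$ given by $(Wf)(\gamma_1,t)=\overline{(\gamma_1\mid\Delta_G(t))}f(\gamma_1,t)$ satisfies $W\lambda^{\widetilde{\omega}}_{\Delta_G(G)\hat{\ }\times G}(\gamma,s)W^*=\lambda_{\Delta_G(G)\hat{\ }}(\gamma)\pi_{\widetilde{\sigma}}(\lambda^\omega_G(s))$.

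The genuine divergence is the first isomorphism, and it is the one place where your proposal has a potential hole. You outsource it to \cite[Theorem 5.1]{GGLN25} as a general identification of $\langle M, e_\varphi\rangle$ with $M\rtimes_{\widetilde{\sigma}}\Sd(\varphi)\hat{\ }$ for almost periodic weights. The paper instead proves it: writing $D=\Delta_G(G)$ and $U_\gamma=\sum_{\delta\in D}(\gamma\mid\delta)1_{\Delta_G^{-1}(\{\delta\})}$, it shows $\langle L_\omega(G), e_{\varphi^\omega_G}\rangle = L_\omega(G)\vee\{U_\gamma:\gamma\in\widehat{D}\}''$, passes to the commutant $R_{\overline{\omega}}(G)$ with the action $\beta_\gamma=\Ad{U_\gamma}$, and invokes Haagerup's criterion \cite[Theorem 2.2]{Hag76}: the isomorphism holds once $p_1=\int_{\widehat{D}}\lambda_{\widehat{D}}(\gamma)\,d\mu_{\widehat{D}}(\gamma)$ has full central support in $R_{\overline{\omega}}(G)\rtimes_\beta\widehat{D}$, which follows from the covariance relation $\rho^{\overline{\omega}}_G(s)p_1\rho^{\overline{\omega}}_G(s)^*=p_{\Delta_G(s)}$ together with $\sum_{\delta}p_\delta=1$. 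Note that the paper describes its corollary only as ``similar to'' the cited Theorem 5.1 and does not use that theorem in its proof, which suggests the cited statement concerns the untwisted Plancherel weight on $L(G)$ rather than arbitrary twisted or almost periodic weights; if so, your citation does not apply verbatim, and you would need either to reproduce the Haagerup argument in the twisted setting (as the paper does) or to execute your fallback of compressing the untwisted statement for $\mathbb{T}\rtimes_{(1,\omega)}G$ by the central projection $p_G$ --- which itself requires an argument relating the corner of a basic construction to the basic construction of the corner. Everything else in your proposal is correct.
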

\begin{proof} We denote $G_1:= \ker\Delta_G$ and $D:=\Delta_G(G)$. For each $\gamma \in \widehat{D}$, we defined 
    \[
        U_\gamma := \sum_{\delta \in D} (\gamma|\delta) 1_{\Delta_G^{-1}(\{\delta\})}.
    \]
By computation, we have $\widetilde{\sigma}_\gamma(x) = U_\gamma x U_\gamma^*$ for all $\gamma \in \widehat{D}$ and $x \in L_\omega(G)$. Hence $\langle L_\omega(G), e_{\varphi^\omega_G}\rangle = L_\omega(G) \vee \{U_\gamma: \gamma \in \widehat{D}\} ''$. Next, we define an action $\beta: \widehat{D} \curvearrowright R_{\overline{\omega}}(G)$ via $\beta_\gamma(y) := U_\gamma y U_\gamma^*$, which  satisfies $\beta_\gamma(\rho_G^{\overline{\omega}}(s))= \overline{(\gamma |\Delta_G(s))}\rho_G^{\overline{\omega}}(s)$ for all $\gamma \in \widehat{D}$ and $s \in G$. Consider the family of projections $\{ p_\delta\}_{\delta \in D}$ on the crossed product $R_{\overline{\omega}}(G) \rtimes_\beta \widehat{D}$ defined by
    \[
        p_\delta := \int_{\widehat{D}} (\gamma | \delta) \lambda_{\widehat{D}}(\gamma) d\mu_{\widehat{D}}(\gamma) \qquad \delta \in D,
    \]
where $\mu_{\widehat{D}}$ is the unique Haar measure on $\widehat{D}$ satisfying $\mu_{\widehat{D}}(\widehat{D}) =1$. By \cite[Theorem 2.2]{Hag76}, the first isomorphism follows by showing that $p_1$ has full central support in $R_{\overline{\omega}}(G) \rtimes_\beta \widehat{D}$. Rewriting the covariant relation, we have $\rho^{\overline{\omega}}_G(s)\lambda_{\widehat{D}}(\gamma)\rho^{\overline{\omega}}_G(s)^*= (\gamma | \Delta_G(s))\lambda_{\widehat{D}}(\gamma)$ and subsequently, 
    \[
        \rho^{\overline{\omega}}_G(s)p_1\rho^{\overline{\omega}}_G(s)^* = \int_{\widehat{D}} (\gamma|\Delta_G(s)) \lambda_{\widehat{D}}(\gamma) d\mu_{\widehat{D}}(\gamma) = p_{\Delta_G(s)}.
    \]
Thus the central support of $p_1$ is 1 since $\sum_\delta p_\delta=1$.
    
The second isomorphism follows by showing that $\widetilde{\sigma}$ is the dual action of the cocycle action $(\check{\alpha}^\omega,u^\omega)$ on $L_\omega(G_1)$ from Theorem \ref{thm:cocycle_action_decomp}, and applying \cite[Theorem 2]{NS79}. We identify $G \cong G_1 \rtimes_{(\alpha, c)} D$ associated to a normalized section $\sigma$ and let $\Phi:L_\omega(G) \to L_\omega(G_1) \rtimes_{(\check{\alpha}^\omega, u^\omega)} D$ be the isomorphism of Theorem~\ref{thm:cocycle_action_decomp}. Note that, under the identification of $G$, we have $\Delta_G(s,\delta) = \delta$  for all $(s,\delta) \in G$. For each $t \in \R$ and $(s,\delta)\in G$,
    \[
        \widetilde{\sigma}_{\hat{\iota}(t)}(\lambda^\omega_G(s,\delta))= \sigma_t^{\varphi^\omega_G}(\lambda^\omega_G(s,\delta))= \Delta_G(s,\delta)^{it} \lambda^\omega_G(s,\delta)=\delta^{it}\lambda^\omega_G(s,\delta)=(\iota(\delta)\mid t) \lambda_G^\omega(s,\delta)= (\delta\mid \hat{\iota}(t)) \lambda_G^\omega(s,\delta).
    \]
The density of $\hat{\iota}(\R) \subset D$ and continuity $\widehat{D} \ni \gamma \mapsto \widetilde{\sigma}_\gamma \in \Aut(L_\omega(G))$ gives us that 
    \[
        \widetilde{\sigma}_\gamma(\lambda^\omega_G(s,\delta)) = (\delta\mid \gamma) \lambda^\omega_G(s,\delta) \qquad \gamma \in \widehat{D}, \delta \in D, s \in G_1.
    \]
It follows that $\widetilde{\sigma}$ is the dual action of $(\check{\alpha}^\omega,u^\omega): D\curvearrowright L_\omega(G_1)$. Thus by \cite[Theorem 2]{NS79}, we have
    \[
        L_\omega(G) \rtimes_{\widetilde{\sigma}} \widehat{D} \cong [L_\omega(G_1) \rtimes_{(\check{\alpha}^\omega,u^\omega)}D]\rtimes_{\widetilde{\sigma}} \widehat{D} \cong L_\omega(G_1) \bar\otimes B(\ell^2D)
    \]
as claimed.

To show the last isomorphism, we show that $L_\omega(G) \rtimes_{\widetilde{\sigma}} \widehat{D}\cong L_{\widetilde{\omega}}(\widehat{D} \times G)$. By computing the cocycle identity, we have that $\widetilde{\omega}$ is indeed a 2-cocycle. Recall that $L_\omega(G)\rtimes_{\widetilde{\sigma}} \widehat{D}$ is generated by the operators $(\lambda_{\widehat{D}}(\gamma)f)(\gamma_1,t) = f(\gamma^{-1}\gamma_1,t)$ and $[\pi_{\widetilde{\sigma}}(\lambda^\omega_G(s))f](\gamma_1,t) = \overline{(\gamma_1\mid \Delta_G(s))}\omega(t^{-1},s)f(\gamma_1,s^{-1}t)$, where $(\gamma_1, t) \in \widehat{D} \times G$, $s \in G$, $\gamma \in \widehat{D}$, and $f \in L^2(\widehat{D} \times G)$. Thus the map that sends $\lambda_{\widehat{D}\times G}^{\widetilde{\omega}}(\gamma,s)$ to $\lambda_{\widehat{D}}(\gamma)\pi_{\widetilde{\sigma}}(\lambda^\omega_G(s))$ defines the desired normal unital $*$-isomorphism.
\end{proof}

\subsection{Analysis of the twisted Plancherel weight}
In this subsection, we show that for any closed subgroup $H$ of $G$, $H$ is open if and only if the twisted Plancherel weight $\varphi_G^\omega$ is semifinite on the subalgebra $\{ \lambda^\omega_G(s): s \in H\}'' \leq L_\omega(G)$, where $\omega$ is a 2-cocycle on $G$. This is the projective analogue of \cite[Theorem 2.21]{Miy25}. Towards that end, we remind the reader of rho-functions and the support of an element of the group von Neumann algebra.

Recall that for a closed subgroup $H$ of a locally compact group $G$, there exists a continuous function $\rho\colon G\to (0,\infty)$, called \emph{rho-function} for the pair $(G,H)$, satisfying
    \[
        \rho(st) = \frac{\Delta_H(t)}{\Delta_G(t)} \rho(s), \qquad \qquad s\in G,\ t\in H,
    \]
and there exists a strongly quasi-invariant measure $\mu_{G/H}$ on $G/H$ such that 
    \[
        \int_G f(s) \rho(s) d\mu_G(s) = \int_{G /H} \int_H f(st) d\mu_H(t) d\mu_{G/H}(sH) \qquad f \in C_c(G)
    \]
(see \cite[Proposition 5.4 and Theorem 2.56]{Fol16}). 

Recall that the \emph{Fourier algebra} $A(G)$ of $G$ is the set of all $f * g^\flat$ such that $f,g \in L^2(G)$, where $g^\flat(s)=\overline{g(s^{-1})}$. We have that $A(G)$ is isomorphic to the predual $L(G)_*$ of $L(G)$ as Banach spaces. We defined the duality pairing by 
    \[
        \langle x, \xi\rangle = \langle xf,g\rangle_{L^2(\mu_G)} \qquad \xi :=f*g^\flat \in A(G), x \in L(G).
    \]
For $\xi \in A(G)$, we defined a normal completely bounded map $m_\xi: L(G) \to L(G)$ by
    \[
        \langle m_\xi(x), \eta\rangle = \langle x, \xi\eta\rangle \qquad \xi,\eta \in A(G), x \in L(G).
    \]
It is not difficult to show that $\langle\lambda_G(s), \xi\rangle = \xi(s)$ and $m_\xi(\lambda_G(s)) = \xi(s)\lambda_G(s)$. We defined the \emph{support} of $x \in L(G)$ as follows
    \[
        \supp(x):= \{ s \in G: m_\xi(x) \neq 0 \text{ for all } \xi \in A(G) \text{ such that } \xi(s) \neq 0\}.
    \]
The support of $x$ is known to be a closed set in $G$ (for more details see \cite[Section 2.6 and 3.1]{BB18}).
\begin{thm}\label{thm:semifinite_and_H_is_open}
Let $H$ be a closed subgroup of a second countable locally compact group $G$, let $\omega: G \times G \to \mathbb{T}$ be a 2-cocycle and let $\varphi^\omega_G$ be the twisted Plancherel weight on $L_\omega(G)$. Then $H$ is open if and only if $\varphi^\omega_G$ is semifinite on $\{\lambda^\omega_G(h): h \in H\}''$. In this case, under the identification of  $\{\lambda^\omega_G(h): h \in H\}'' \cong L_\omega(H)$, $\varphi_G^\omega$ is a multiple of the twisted Plancherel weight $\varphi_H^\omega$ on $L_\omega(H)$.
\end{thm}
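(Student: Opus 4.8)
The plan is to reduce to the untwisted statement \cite[Theorem 2.21]{Miy25} by passing to the central extension and exploiting the corner description of Proposition~\ref{prop:decomp_of_central_extension}. After replacing $\omega$ by a cohomologous fully normalized cocycle, I would set $G(\omega):=\mathbb{T}\rtimes_{(1,\omega)}G$ and $H(\omega):=\mathbb{T}\rtimes_{(1,\omega)}H$, the latter being the preimage of $H$ under the quotient map $\pi\colon G(\omega)\to G$; since $\pi$ is a continuous open homomorphism, $H$ is open in $G$ exactly when $H(\omega)$ is open in $G(\omega)$. Writing $M:=\{\lambda_{G(\omega)}(z):z\in H(\omega)\}''$ and $N:=\{\lambda^\omega_G(h):h\in H\}''$, the first task is to record from (\ref{eqn:projection_onto_the_twist}) that $p_G$ is a weak integral over $\mathbb{T}$ of the unitaries $\lambda_{G(\omega)}(b,e)\in M$, so that $p_G\in M$, and that $p_G$ is central. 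Proposition~\ref{prop:decomp_of_central_extension} then gives $\Omega_G(M)=N$ and a weight-preserving isomorphism $Mp_G\cong N$ carrying $\varphi_{G(\omega)}|_{Mp_G}$ to $\varphi^\omega_G|_N$, so the whole question becomes one about semifiniteness of $\varphi_{G(\omega)}$ on $M$ and on its corner $Mp_G$.

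For the forward implication I would assume $H$ open, so $H(\omega)$ is open and \cite[Theorem 2.21]{Miy25} applies to $H(\omega)\leq G(\omega)$, giving that $\varphi_{G(\omega)}$ is semifinite on $M$ with $M\cong L(H(\omega))$ and $\varphi_{G(\omega)}|_M$ a multiple of $\varphi_{H(\omega)}$. Because $\Delta_{G(\omega)}(b,e)=1$ by (\ref{eqp:modular_function_of_central_extension}), the unitaries $\lambda_{G(\omega)}(b,e)$, and hence $p_G$, lie in the centralizer of $\varphi_{G(\omega)}$; since $M$ is globally invariant under $\sigma^{\varphi_{G(\omega)}}$ and $\varphi_{G(\omega)}|_M$ is semifinite, $p_G$ also sits in the centralizer of $\varphi_{G(\omega)}|_M$, and restricting a semifinite weight to a corner cut by a centralizer projection keeps it semifinite. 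Transporting back through $\Omega_G$ yields semifiniteness of $\varphi^\omega_G|_N$. For the last assertion I would note that the isomorphism $M\cong L(H(\omega))$ sends $p_G$ to the analogous projection $p_H$, so composing with $\Omega_H$ identifies $N\cong L_\omega(H)$ and makes $\varphi^\omega_G|_N$ the same multiple of $\varphi^\omega_H$; alternatively this can be seen directly from $\varphi^\omega_G(\lambda^\omega_G(f)^*\lambda^\omega_G(f))=\|f\|^2_{L^2(\mu_G|_H)}$ for $f\in L^2(H)\subseteq L^2(G)$, using that $\mu_G|_H$ is a Haar measure on the open subgroup $H$.

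For the converse I would argue by contrapositive with supports, transferring the problem into $L(G(\omega))$ so that the untwisted support theory of \cite{BB18} is available. If $H$ is closed but not open then $\mu_G(H)=0$, hence $\mu_{G(\omega)}(H(\omega))=\mu_\mathbb{T}(\mathbb{T})\mu_G(H)=0$. Assuming $\varphi^\omega_G|_N$ semifinite, equivalently $\varphi_{G(\omega)}|_{Mp_G}$ semifinite, the set $\sqrt{\dom}(\varphi_{G(\omega)})\cap Mp_G$ would be weakly dense in $Mp_G$. Any such element has the form $y=\lambda_{G(\omega)}(g)$ with $g\in L^2(G(\omega))$, and $y\in M$ forces $\supp(y)\subseteq \overline{H(\omega)}=H(\omega)$; identifying the operator support of $\lambda_{G(\omega)}(g)$ with the essential support of $g$ then shows $g$ is supported on the null set $H(\omega)$, so $g=0$ and $y=0$. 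Thus $\sqrt{\dom}(\varphi_{G(\omega)})\cap Mp_G=\{0\}$, which cannot be weakly dense in the nonzero algebra $Mp_G$ (whose unit $p_G$ is a nonzero projection), the desired contradiction.

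I expect the crux to be the support step in the converse: identifying the operator support of $\lambda_{G(\omega)}(g)$ with the essential support of $g$, and confirming from \cite{BB18} that membership of $y$ in the algebra generated by $H(\omega)$ really does confine $\supp(y)$ to $H(\omega)$. This is exactly what turns the measure-zero hypothesis into the vanishing of every square-integrable vector. A secondary bookkeeping point will be verifying that $p_G$ belongs simultaneously to $M$, to the centralizer of $\varphi_{G(\omega)}$, and thence to the centralizer of $\varphi_{G(\omega)}|_M$, since this is what legitimizes moving between semifiniteness on $M$ and on the corner $Mp_G$.
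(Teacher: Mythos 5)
Your proposal is correct, and while it shares the paper's central-extension transfer, it takes a genuinely different route at three points. For the forward implication, the paper argues directly: it normalizes $\mu_G,\mu_H$ so that $G/H$ carries counting measure, extends functions in $L^p(H)$ by zero to $L^p(G)$, and uses a bounded approximate identity for $L_\omega(H)$; you instead invoke the untwisted theorem \cite[Theorem 2.21]{Miy25} for $H(\omega)\leq G(\omega)$ and cut by $p_G$, which is legitimate since $p_G$ is a weak integral of the unitaries $\lambda_{G(\omega)}(b,e)\in M$, is central in $L(G(\omega))$ (hence in $M$), and lies in the centralizer of $\varphi_{G(\omega)}$ because $\Delta_{G(\omega)}(b,e)=1$ --- this is shorter, though the paper's explicit normalization is what later pins the multiple down to $1$. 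For the converse, both proofs transfer to $L(G(\omega))$ via $\Omega_G$ and confine supports to $H(\omega)$ by \cite[Theorem 3]{TT72}, but the endgames differ: the paper uses Haagerup's result \cite{Haa79} to produce a nonzero \emph{continuous} left-bounded $f$ with $\supp(f)\subseteq H(\omega)$, so $H(\omega)$ has nonempty interior and is open; you instead use the Steinhaus-type fact that a closed non-open subgroup is Haar-null, so $\mu_{G(\omega)}(H(\omega))=0$, and kill every element of $\sqrt{\dom}(\varphi_{G(\omega)})\cap Mp_G$. Your route avoids \cite{Haa79}, but the step you correctly flag as the crux --- that the essential support of a general left-bounded $g\in L^2(G(\omega))$ is contained in $\supp(\pi_\ell(g))$ --- is a slight strengthening of \cite[Lemma 2.13]{Miy25}, which the paper only needs (and applies) for continuous functions; it does hold, via $m_\xi(\pi_\ell(g))=\pi_\ell(\xi g)$, the injectivity of $\pi_\ell$ on left-bounded functions, and regularity of the Fourier algebra, so this is a checkable standard fact rather than a gap. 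Finally, for the "multiple of $\varphi^\omega_H$" claim the paper gives a modular-theoretic argument (commutation of the weights plus \cite[Proposition VIII.3.15]{Tak03} applied to the dense invariant subalgebra spanned by $\lambda^\omega_H(f_1)^*\lambda^\omega_H(f_2)$, $f_i\in B_{b,c}(H)$), whereas you transport the untwisted multiple through the identification sending $p_G\mapsto p_H$ and compose with $\Omega_H$; this works provided the cited untwisted theorem includes the multiple statement, and your fallback direct computation on $f\in L^2(H)\subseteq L^2(G)$ is exactly the paper's forward-direction calculation.
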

\begin{proof}
Assume that $\omega$ is fully normalized (see discussion before \cite[Lemma 2]{Kle74} and \cite[Proposition 2.4]{Sut80}). Denote $G(\omega) := \mathbb{T}\rtimes_{(1,\omega)} G$ and $H(\omega):= \mathbb{T}\rtimes_{(1,\omega)} H$. By \cite[Proposition 2.6]{GG25}, there exists normal unital $*$-isomorphism $\Phi:L_\omega(H) \to L_\omega(G)$ such that $\Phi(\lambda^\omega_H(h)) = \lambda^\omega_G(h)$ for all $h \in H$.

Suppose $H$ is open, then $G/H$ is discrete and $\Delta_G|_H = \Delta_H$. It follows that rho-function for the pair $(G,H)$ is 1 and the counting measure is a $G$-invariant Radon measure on $G/H$. Choose left Haar measures $\mu_G$ and $\mu_H$ on $G$ and $H$, respectively, such that 
    \[
        \int_G f(s)d\mu_G(s) = \sum_{sH \in G/H} \int_H f(sh) d\mu_H, \qquad f \in C_c(G). 
    \]
The openness of $H$ and the normalization implies that for each $f \in L^p(H)$, we have a measurable function $\tilde{f}$ defined by $\tilde{f}(s):= f(s)$ if $s \in H$ and zero otherwise and satisfies $\tilde{f} \in L^p(G)_H := \{ g \in L^p(G): \supp (g) \subset H\}$ and $\|\tilde{f}\|_{L^p(\mu_G)} = \|f\|_{L^p(\mu_H)}$. Thus for each $f \in L^1_\omega(H)$, we have $\lambda^\omega_G(\tilde{f}) \in \{\lambda^\omega_G(h): h \in H\}''$ and for each $f \in L^2(H)$, we have $\varphi^\omega_G(\lambda_G^\omega(\tilde{f})^*\lambda_G^\omega(\tilde{f})) =\|f\|_{L^2(\mu_G)}^2$. It follows that $\varphi^\omega_G$ is semifinite in $\{\lambda^\omega_G(h): h \in H\}''$, since $L_\omega(H)$ has a bounded approximate identity (see \cite[Lemma 3.1]{Edw69}).

Suppose that $\varphi_G^\omega$ is semifinite on $\Phi(L_\omega(H))$. By composing it with the homomorphism $\Omega_G$ of Proposition~\ref{prop:decomp_of_central_extension}, it follows that $\varphi_{G(\omega)}$ is semifinite on $\Omega_G(\Phi(L_\omega(H)))$. By \cite{Haa79}, there exists a continuous left bounded function $f\in L^2(G(\omega))$ such that $\pi_\ell(f) \in \Omega_G(\Phi(L_\omega(H)))\cap \dom(\varphi_{G(\omega)})$, where $\pi_\ell(f)$ is the operator defined by $\pi_\ell(f)g= f *g$ for all $g \in L^2(G(\omega))$. By \cite[Lemma 2.13]{Miy25}, $\supp(f) \subset \supp(\pi_\ell(f))$. Additionally, we have $\Omega_G(\Phi(L_\omega(H))) \subset \{ \lambda_{(G(\omega)}(a,s) : (a,s) \in H(\omega)\}''$ and by \cite[Theorem 3]{TT72}, it follows that $\supp(f) \subset \supp(\pi_\ell(f)) \subset H(\omega)$. Thus $H(\omega)$ is open in $G(\omega)$ by continuity of $f$ and applying \cite[Lemma 1.1]{GGN25}. Moreover $H$ is open.

Lastly, we show that the faithful normal semifinite weight $\psi: =\varphi^\omega_G \circ \Phi$ is a multiple of $\varphi^\omega_H$. Since we normalized the measures on $G$ and $H$ such that $G/H$ has the counting measure, the multiple will be 1. Otherwise it will be $\mu(H)$, where $\mu$ is a $G$-invariant measure on $G/H$. Since $\Delta_G|_H=\Delta_H$, it follows that 
    \[
        \sigma^\psi_t(\lambda^\omega_H(h)) = \Phi^{-1} \circ \sigma^{\varphi^\omega_G}(\lambda^\omega_G(h)) = \Delta_G(h)^{it} \lambda^\omega_H(h)= \Delta_H(h)^{it} \lambda^\omega_H(h) = \sigma^{\varphi^\omega_H}(\lambda^\omega_H(h)).
    \]
Thus $\psi \circ \sigma^{\varphi_H^\omega} = \psi$, that is $\psi$ commutes with $\varphi_H^\omega$. By \cite[Proposition VIII.3.15]{Tak03}, $\psi$ and $\varphi^\omega_H$ agree if they agree on a $\sigma$-weakly dense $*$-subalgebra in $\sqrt{\dom}(\varphi^{\omega}_H)$, which is invariant over the automorphism group $\sigma^{\varphi^{\omega}_H}$. We consider the $\sigma$-weakly dense $*$-subalgebra $B = \text{span}\{ \lambda^{\omega}_H(f_1)^*\lambda_H^{\omega}(f_2) : f_1,f_2 \in B_{b,c}(H)\}$. For each $f \in B_{b,c}(H)$,  $\Phi(\lambda^\omega_H(f))= \lambda^\omega_G(F)$ is an element in $\lambda^\omega_G(B_{b,c}(G))$, where $F(s)=f(s)$ if $s \in H$ and zero otherwise. Thus by the normalization on $\mu_G$ and $\mu_H$, we have
    \[
        \psi(\lambda_H^\omega(f_1)^*\lambda^\omega_H(f_2)) = \varphi^\omega_G( \lambda^\omega_G(F_1)^*\lambda^\omega_G(F_2))= \langle F_1,F_2\rangle_{L^2(\mu_G)} = \langle f_1,f_2\rangle_{L^2(\mu_H)}=\varphi^\omega_H(\lambda_H^\omega(f_1)^*\lambda^\omega_H(f_2)),
    \]
for all $f_1,f_2 \in B_{b,c}(H)$, as desired.
\end{proof}

The following results follows from the previous result, $\{ \lambda^\omega_G(s) : s\in H\}''$ being globally $\sigma^{\varphi^\omega_G}$-invariant and \cite[Theorem IX.4.2]{Tak03}.

\begin{cor}
Let $H$ be a closed subgroup of a second countable locally compact group $G$, let $\omega: G \times G \to \mathbb{T}$ be a 2-cocycle and let $\varphi^\omega_G$ be the twisted Plancherel weight on $L_\omega(G)$. Then $H$ is open if and only if there exists a $\varphi^\omega_G$-preserving conditional expectation from $L_\omega(G)$ onto $\{ \lambda^\omega_G(s) : s\in H\}'' \cong L_\omega(H)$.
\end{cor}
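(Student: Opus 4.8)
The plan is to deduce the statement directly from Theorem~\ref{thm:semifinite_and_H_is_open} together with Takesaki's theorem on the existence of weight-preserving conditional expectations. Write $N:=\{\lambda^\omega_G(s):s\in H\}''$, which is isomorphic to $L_\omega(H)$ via the map of \cite[Proposition 2.6]{GG25} already used in the proof of Theorem~\ref{thm:semifinite_and_H_is_open}. The two ingredients to assemble are the global modular invariance of $N$ and the semifiniteness criterion for $\varphi^\omega_G|_N$.

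First I would verify that $N$ is globally invariant under the modular automorphism group $\sigma^{\varphi^\omega_G}$. This is immediate from the explicit formula $\sigma^{\varphi^\omega_G}_t(\lambda^\omega_G(s)) = \Delta_G(s)^{it}\lambda^\omega_G(s)$ recorded in Subsection~\ref{subsec:Twisted_gp_vNa}: for each generator $\lambda^\omega_G(h)$ with $h\in H$, the automorphism $\sigma^{\varphi^\omega_G}_t$ sends it to the scalar multiple $\Delta_G(h)^{it}\lambda^\omega_G(h)\in N$. Since $\sigma^{\varphi^\omega_G}_t$ is a normal $*$-automorphism and $N$ is generated (in the strong operator topology) by these operators, one obtains $\sigma^{\varphi^\omega_G}_t(N)\subseteq N$ for every $t\in\R$, and applying $\sigma^{\varphi^\omega_G}_{-t}$ yields the equality $\sigma^{\varphi^\omega_G}_t(N)=N$.

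With this invariance in hand, the second step is to invoke \cite[Theorem IX.4.2]{Tak03}, which asserts that for a $\sigma^{\varphi^\omega_G}$-invariant subalgebra $N$ there exists a (necessarily unique) faithful normal $\varphi^\omega_G$-preserving conditional expectation $L_\omega(G)\to N$ if and only if the restriction $\varphi^\omega_G|_N$ is semifinite. Since Takesaki's theorem is an equivalence, it handles both implications simultaneously. Finally, Theorem~\ref{thm:semifinite_and_H_is_open} identifies this semifiniteness condition with the openness of $H$, completing the corollary.

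I do not expect a genuine obstacle here: the substantive analytic content was already carried out in Theorem~\ref{thm:semifinite_and_H_is_open}, and the only new verification is the global $\sigma^{\varphi^\omega_G}$-invariance of $N$, which is a one-line computation from the scalar action of the modular group on the generating unitaries $\lambda^\omega_G(h)$. The result is therefore a direct corollary, and the care needed is only in citing Takesaki's theorem with its correct hypotheses (invariance plus semifiniteness) so that the equivalence is applied cleanly in both directions.
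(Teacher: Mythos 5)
Your proposal is correct and follows the paper's own route exactly: the paper also deduces the corollary from Theorem~\ref{thm:semifinite_and_H_is_open}, the global $\sigma^{\varphi^\omega_G}$-invariance of $\{\lambda^\omega_G(s):s\in H\}''$ (immediate from $\sigma^{\varphi^\omega_G}_t(\lambda^\omega_G(h))=\Delta_G(h)^{it}\lambda^\omega_G(h)$), and \cite[Theorem IX.4.2]{Tak03}.
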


\subsection{Intermediate twisted group von Neumann algebra}
In \cite[Subsection 5.1]{GGN25}, we showed that all the intermediate von Neumann algebras $L(\ker\Delta_G) \leq P \leq L(G)$ are isomorphic to $L(H)$ for some intermediate group $\ker \Delta_G \leq H \leq G$ if and only if $L(\ker \Delta_G)$ is a factor (see \cite[Theorem 5.6]{GGN25}). This followed by \cite{ILP98} and witnessing $L(H)$ as the fixed point subalgebra of the point modular extension of the modular automorphism group of $\varphi_G$. In light of Theorem~\ref{thm:cocycle_action_decomp} and Corollary~\ref{cor:forms_of_the_twisted_basic_construction}, we show that this still holds for the twisted group von Neumann algebras. Since $L(\mathbb{T}\rtimes_{(1,\omega)} \ker\Delta_G)$ is not a factor, we do not appeal to Proposition~\ref{prop:decomp_of_central_extension}. Regardless, the proof of this result is analogues to \cite[Theorem 5.6]{GGN25} since that result depended on the generators of $L(G)$. 

Each intermediate group $\ker \Delta_G \leq H \leq G$ can be identified with 
    \[
        H \cong \ker\Delta_G \rtimes_{(\beta , c)} \Delta_G(H),
    \]
where $(\beta, c):\Delta_G(G) \curvearrowright \ker\Delta_G$ is a continuous cocycle action (see \cite[Remark 3.7]{GGN25}). Then if $(\check{\alpha}^\omega, u^\omega): \Delta_G(G) \curvearrowright L_\omega(\ker\Delta_G)$ is the cocycle action associated to $(\beta,c)$ as we saw in Theorem~\ref{thm:cocycle_action_decomp}, we have 
    \[
        L_\omega(H) \cong L_\omega(\ker\Delta_G) \rtimes_{(\check{\alpha}^\omega, u^\omega)} \Delta_G(H).
    \]
Subsequently, we obtain
    \[
        L_\omega(G)^{\Delta_G(H)^\perp}:= \{ x \in L_\omega(G) : \widetilde{\sigma}_\gamma(x) =x \ \forall \gamma \in \Delta_G(H)^\perp\} \cong L_\omega(H), 
    \]
where $\Delta_G(H)^\perp := \{ \gamma \in \Delta_G(G)\hat{\ } : ( \gamma \mid \delta) = 1 \ \forall \delta \in \Delta_G(H)\}$ and $\widetilde{\sigma}: \Delta_G(G)\hat{\ } \curvearrowright L_\omega(G)$ is the point modular extension of the modular automorphism group of $\varphi^\omega_G$, which is also the dual action of $(\check{\alpha}^\omega,u^\omega)$. Conversely, for a closed subgroup $K \leq \Delta_G(G)\hat{\ }$, we denote $K_\perp:=\{ \delta \in \Delta_G(G) : (\gamma \mid \delta) = 1 \ \forall \gamma \in K\}$ and thus
    \begin{equation}\label{eqn:fixed_pnt_decomposition}
        L_\omega(G)^K \cong L_\omega( \ker\Delta_G \rtimes_{(\beta, c)} K_\perp).
    \end{equation}
It follows that an intermediate algebra is of the form $L_\omega(H)$ for some closed subgroup $H\leq G$ if and only if it is of the form $L_\omega(G)^K$ for some closed subgroup $K\leq \Delta_G(G)\hat{\ }$. The following result gives a characterization of the latter, which is the twisted version of \cite[Proposition 5.5]{GGN25}.

\begin{prop}
Let $G$ be a second countable almost unimodular group with a 2-cocycle $\omega:G \times G \to \mathbb{T}$, let $\widetilde{\sigma}: \Delta_G(G)\hat{\ } \curvearrowright L_\omega(G)$ be the point modular extension of the modular automorphism group of the twisted Plancherel weight $\varphi^\omega_G$ on $L_\omega(G)$. For an $\tilde{\sigma}$-invariant intermediate von Neumann algebra $L_\omega(\ker\Delta_G)\leq P \leq L_\omega(G)$, denote for each $\delta \in \Delta_G(G)$
    \[
        z_\delta : = \bigvee \{ vv^* : v \in P \text{ partial isometry with } \widetilde{\sigma}_\gamma(v) = (\gamma \mid \delta ) v \ \forall \gamma \in \Delta_G(G)\hat{\ }\}.
    \]
Then one has $P = L_\omega(G)^K$ for some closed subgroup $K \leq \Delta_G(G)\hat{\ }$ if and only if $z_\delta = 1_{\Sd(\varphi^\omega_G|_P)}(\delta)$ for all $\delta \in \Delta_G(G)$. In this case, $\Sd(\varphi^\omega_G|_P)$ is a group and one has $K = \Sd(\varphi^\omega_G|_P)^\perp$.
\end{prop}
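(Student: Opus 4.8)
The plan is to prove the characterization by relating the $\widetilde\sigma$-invariant intermediate algebra $P$ to a crossed-product decomposition over $\Delta_G(G)$, exactly paralleling the argument for \cite[Proposition 5.5]{GGN25}. The key structural fact I would use is that, because $\varphi^\omega_G$ is almost periodic and $P$ is globally $\widetilde\sigma$-invariant, $P$ carries a spectral decomposition indexed by $\Delta_G(G)$: for each $\delta\in\Delta_G(G)$ the spectral subspace $P_\delta:=\{x\in P: \widetilde\sigma_\gamma(x)=(\gamma\mid\delta)x\ \forall\gamma\in\Delta_G(G)\hat{\ }\}$ collects the eigenoperators of weight $\delta$, and $P$ is generated by $\bigcup_\delta P_\delta$. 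The projection $z_\delta$ is the supremum of the left supports of the partial isometries living in $P_\delta$; I would first record that $z_\delta\in Z(P^{\widetilde\sigma})=Z(L_\omega(\ker\Delta_G))$ (the fixed-point algebra is $L_\omega(\ker\Delta_G)$ by Theorem~\ref{thm:cocycle_action_decomp}), that $z_1=1$, and that $\{\delta: z_\delta=1\}$ is closed under the semigroup operations. The point spectrum $\Sd(\varphi^\omega_G|_P)$ is precisely $\{\delta\in\Delta_G(G): P_\delta\neq 0\}=\{\delta: z_\delta\neq 0\}$, so the indicator $1_{\Sd(\varphi^\omega_G|_P)}(\delta)$ equals $1$ exactly when $P_\delta\neq 0$.

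\textbf{Forward direction.} Suppose $P=L_\omega(G)^K$ for a closed subgroup $K\leq\Delta_G(G)\hat{\ }$. By the Takesaki-type duality behind \eqref{eqn:fixed_pnt_decomposition}, $P\cong L_\omega(\ker\Delta_G\rtimes_{(\beta,c)}K_\perp)$, so $P$ is itself a twisted crossed product of $L_\omega(\ker\Delta_G)$ by the subgroup $K_\perp\leq\Delta_G(G)$. In such a crossed product every spectral subspace $P_\delta$ with $\delta\in K_\perp$ contains a \emph{unitary} (the implementing generator $\lambda_{G}^\omega$ evaluated on a section of $\delta$), so its left support is $1$ and hence $z_\delta=1$; for $\delta\notin K_\perp$ one has $P_\delta=0$ so $z_\delta=0=1_{\Sd(\varphi^\omega_G|_P)}(\delta)$. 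This also identifies $\Sd(\varphi^\omega_G|_P)=K_\perp$, which is a group, and dualizing via Pontryagin duality for the pairing $\Delta_G(G)\hat{\ }\times\Delta_G(G)\to\mathbb T$ gives $K=(K_\perp)^\perp=\Sd(\varphi^\omega_G|_P)^\perp$.

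\textbf{Converse direction.} Assume $z_\delta=1_{\Sd(\varphi^\omega_G|_P)}(\delta)$ for all $\delta$. Writing $S:=\Sd(\varphi^\omega_G|_P)=\{\delta: z_\delta\neq 0\}$, the hypothesis forces $z_\delta\in\{0,1\}$, i.e.\ each nontrivial spectral subspace $P_\delta$ contains a partial isometry with full left support $1$. Using that $z_1=1$ and the multiplicativity $P_{\delta_1}P_{\delta_2}\subseteq P_{\delta_1\delta_2}$ together with adjoints sending $P_\delta$ into $P_{\delta^{-1}}$, I would show $S$ is closed under products and inverses, hence is a (topologically closed) subgroup of $\Delta_G(G)$; then each $P_\delta$ ($\delta\in S$) in fact contains a \emph{unitary} because a partial isometry with full left support whose right support is also full (obtained by combining $v$ with $v^*\in P_{\delta^{-1}}$) is unitary. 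Consequently $P$ is generated by $L_\omega(\ker\Delta_G)$ together with a unitarily implemented copy of $S$, so $P\cong L_\omega(\ker\Delta_G)\rtimes_{(\check\alpha^\omega,u^\omega)}S$, which by \eqref{eqn:fixed_pnt_decomposition} equals $L_\omega(G)^K$ for $K:=S^\perp$.

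\textbf{The main obstacle} will be the converse step of promoting ``$z_\delta=1$'' to the existence of a genuine unitary in $P_\delta$ implementing the $\delta$-automorphism, rather than merely a partial isometry with full left support; in the non-tracial (type III) setting one must argue that the right supports also exhaust $1$, which is where the interplay $P_\delta^*=P_{\delta^{-1}}$ and the fact that $z_\delta\in Z(L_\omega(\ker\Delta_G))$ are decisive, and where I would lean most heavily on the structure theory for almost periodic weights and the crossed-product recognition used in \cite[Proposition 5.5]{GGN25}.
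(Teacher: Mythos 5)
Your forward direction matches the paper's. The problem is your converse, and it sits exactly at the step you yourself flag as ``the main obstacle'' and then leave unresolved. First, the hypothesis $z_\delta=1$ does \emph{not} give a partial isometry in $P_\delta$ with full left support: $z_\delta$ is the \emph{join} of the left supports $vv^*$ over a family of partial isometries, and what \cite[Lemma 2.1]{GGLN25} provides is a family $\mathcal{V}_\delta\subset P_\delta$ of partial isometries with pairwise orthogonal left supports satisfying $\sum_{v\in\mathcal{V}_\delta}vv^*=z_\delta$. Second, even if you had a single co-isometry $v\in P_\delta$ (i.e.\ $vv^*=1$), that still need not be a unitary: in this setting $L_\omega(G)$ is typically properly infinite, so $v^*v$ can be a proper subprojection of $1$, and ``combining $v$ with $v^*\in P_{\delta^{-1}}$'' does not repair this. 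Turning a family of partial isometries with orthogonal left supports summing to $1$ into a single unitary eigenoperator would require matching up their right supports via equivalences inside the centralizer, which you have not supplied and which does not follow from the stated hypotheses. So the key step of your converse is asserted, not proved.

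The paper's proof avoids this issue entirely: it never constructs a unitary inside $P_\delta$. Instead it uses the fact that the ambient algebra already contains a unitary eigenoperator of eigenvalue $\delta$, namely $\lambda_G^\omega(s)$ for any $s\in\Delta_G^{-1}(\{\delta\})$, and shows this unitary must lie in $P$. Indeed, each $\lambda_G^\omega(s)^*v$ with $v\in\mathcal{V}_\delta$ is an eigenoperator of eigenvalue $1$, hence lies in $L_\omega(G)^{\varphi^\omega_G}=L_\omega(\ker\Delta_G)\subseteq P$, and therefore
\[
\lambda_G^\omega(s)^*=\sum_{v\in\mathcal{V}_\delta}(\lambda_G^\omega(s)^*v)\,v^*\in P,
\]
using $\sum_{v\in\mathcal{V}_\delta}vv^*=z_\delta=1$ and $v^*\in P$. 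Once $\lambda_G^\omega(\Delta_G^{-1}(\Sd(\varphi^\omega_G|_P)))\subset P$ is in hand, the group property of $\Sd(\varphi^\omega_G|_P)$ follows by taking products and adjoints of these group unitaries (rather than from an abstract multiplicativity $P_{\delta_1}P_{\delta_2}\subseteq P_{\delta_1\delta_2}$, which by itself cannot rule out degenerate products), the inclusion $L_\omega(G)^K\leq P$ for $K=\Sd(\varphi^\omega_G|_P)^\perp$ follows from (\ref{eqn:fixed_pnt_decomposition}), and the reverse inclusion $P\leq L_\omega(G)^K$ follows because $P$ is generated by eigenoperators of $\sigma^{\varphi^\omega_G}$ with eigenvalues in $\Sd(\varphi^\omega_G|_P)$ by \cite[Lemma 1.4]{GGLN25}, each of which is fixed by $\widetilde{\sigma}_\gamma$ for $\gamma\in K$. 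To salvage your route you would need precisely this substitution: replace the (unjustified) unitary inside $P_\delta$ by the ambient unitary $\lambda_G^\omega(s)$ and prove it belongs to $P$ via the displayed identity.
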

\begin{proof}
Since $\varphi^\omega_G$ is semifinite on $L_\omega(\ker\Delta_G)$, it follows that it is also semifinite on $P$. Additionally, $L^2(P,\varphi^\omega_G) \leq L^2(L_\omega(G), \varphi^\omega_G)$ is an invariant subspace for $\Delta_{\varphi_G^\omega}$, since $P$ is $\widetilde{\sigma}$-invariant and $\widetilde{\sigma}$ extends $\sigma^{\varphi_G^\omega}$. Thus $\varphi_G^\omega|_P$ is almost periodic and by \cite[Lemma 2.1]{GGLN25}, $z_\delta=0$ for $\delta\notin \Sd(\varphi^\omega_G|_P)$ and for $\delta \in \Sd(\varphi^\omega_G|_P)$, $z_\delta$ in the center of $L(\ker\Delta_G)$.

Now, suppose that $P=L_\omega(G)^K$ for some closed subgroup $K \leq \Delta_G(G)\hat{\ }$. Then $\Sd(\varphi^\omega_G|_P)=K_\perp$ by (\ref{eqn:fixed_pnt_decomposition}) and  by the first part of the proof, $z_\delta=0$ if $\delta \notin K_\perp$. For $\delta \in \K_\perp$, take $s \in \Delta^{-1}_G(\{\delta\})$ so that $\lambda_G^\omega(s) \in L_\omega(G)^K =P$. Thus $z_\delta \geq \lambda_G^\omega(s)\lambda_G^\omega(s)^* = 1$. Moreover, in this case $K = (K_\perp)^\perp =  \Sd(\varphi^\omega_G|_P)^\perp$ as claimed.

Conversely, suppose $z_\delta =1_{ \Sd(\varphi^\omega_G|_P)}(\delta)$ for all $\delta \in \Delta_G(G)$. For each $\delta \in  \Sd(\varphi^\omega_G|_P)$, by \cite[Lemma 2.1]{GGLN25} there exists a family of partial isometries $\mathcal{V}_\delta \subset P$ satisfying $\sigma^{\varphi^\omega_G}_t(v) = \delta^{it}v$ for all $t \in \R$ and $v \in \mathcal{V}_\delta$ such that 
    \[
        \sum_{v \in \mathcal{V}_\delta} vv^* = z_\delta = 1.
    \]
Then for any $s \in \Delta_G^{-1}(\{\delta\})$, one has $\lambda_G^\omega(s)^*v \in L_\omega(G)^{\varphi^\omega_G}= L_\omega(\ker\Delta_G)$ for all $v \in \mathcal{V}_\delta$, and consequently, 
    \[
        \lambda_G^\omega(s)^* = \sum_{v \in \mathcal{V}_\delta} (\lambda_G^\omega(s)^*v)v^* \subset L_\omega(G)^{\varphi^\omega_G}P = P.
    \]
Thus $\lambda_G^\omega(\Delta_G^{-1}(\Sd(\varphi^\omega_G|_P)) \subset P$. It follows that $\Sd(\varphi^\omega_G|_P)$ is a group. Indeed for $\delta_1,\delta_2 \in \Sd(\varphi^\omega_G|_P)$, choose $s_i \in \Delta_G^{-1}(\{\delta_i\})$ for $i=1,2$, we obtain $\lambda_G^\omega(s_1s_2) = \overline{\omega(s_1,s_2)}\lambda_G^\omega(s_1)\lambda^\omega_G(s_2) \in P \setminus \{0\}$ and $\lambda_G(s_1)^* \in P \setminus \{0\}$ implying $\delta_1\delta_2, \delta_1^{-1} \in \Sd(\varphi^\omega_G|_P)$. Hence we consider $K = \Sd(\varphi^\omega_G|_P)^\perp$ and (\ref{eqn:fixed_pnt_decomposition}) implies $L_\omega(G)^K \leq P$. By \cite[Lemma 1.4]{GGLN25}, $P$ is generated by eigenoperators of $\sigma^{\varphi^\omega_G}$ with eigenvalues in $\Sd(\varphi^\omega_G|_P)$. Since $\widetilde{\sigma}$ extends $\sigma^{\varphi^\omega_G}$, for any eigenoperator $x \in P$ one has $\widetilde{\sigma}_\gamma(x) = (\gamma \mid \delta) x$ for all $\gamma \in \Delta_G(G)\hat{\ }$ and for some $\delta \in \Sd(\varphi^\omega_G|_P)$. Thus $P = L_\omega(G)^K$ since $x \in L_\omega(G)^K$.
\end{proof}

The following result is the twisted version of \cite[Theorem 5.6]{GGN25} and its proof, which we will omit, is similar provided one uses the left regular $\omega$-projective representation $\lambda_G^\omega(s)$ for all $s \in G$. 

\begin{thm}\label{thm:twisted_intermidiated_subalgs}
Let $G$ be a second countable almost unimodular group with a 2-cocycle $\omega:G \times G \to \mathbb{T}$ and let $\varphi^\omega_G$ be a twisted Plancherel weight on $L_\omega(G)$. Assume that $G$ is non-unimodular. Then every intermediate von Neumann algebra $L_\omega(\ker\Delta_G)\leq P \leq L_\omega(G)$ is of the form $P =L_\omega(H)$ for a closed intermediate group $\ker\Delta_G \leq H \leq G$ if and only if $L_\omega(\ker\Delta_G)$ is a factor.
\end{thm}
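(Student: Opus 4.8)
The plan is to run the argument entirely on the side of the compact dual action, exactly as in \cite[Theorem 5.6]{GGN25}, using the identifications set up above. Write $M := L_\omega(G)$, $N := L_\omega(\ker\Delta_G)$, $G_1 := \ker\Delta_G$ and $D := \Delta_G(G)$; since $G$ is almost unimodular, $G_1$ is open and $D \cong G/G_1$ is discrete, so $\widehat D = \Delta_G(G)\hat{\ }$ is compact. By Corollary~\ref{cor:forms_of_the_twisted_basic_construction} the point modular extension $\widetilde\sigma \colon \widehat D \curvearrowright M$ is the dual action of the cocycle action $(\check\alpha^\omega,u^\omega)\colon D \curvearrowright N$ of Theorem~\ref{thm:cocycle_action_decomp}, so that $M^{\widehat D} = N$. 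By the discussion preceding the theorem (in particular \eqref{eqn:fixed_pnt_decomposition}), an intermediate algebra $N \leq P \leq M$ is of the form $L_\omega(H)$ for some closed $\ker\Delta_G \leq H \leq G$ if and only if $P = M^K$ for some closed $K \leq \widehat D$. Thus it suffices to prove that every intermediate $P$ equals some $M^K$ precisely when $N$ is a factor.

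For the implication ``$N$ a factor $\Rightarrow$ every $P$ is $M^K$'', I would verify that the compact action $\widetilde\sigma$ is \emph{minimal} and then invoke the Galois correspondence \cite{ILP98}. Faithfulness of $\widetilde\sigma$ is immediate since $\widehat D$ separates the points of $D$. For irreducibility $N' \cap M = \C 1$, the key point is that non-unimodularity forces each $\check\alpha^\omega_\delta$ with $\delta \neq 1$ to be properly outer: since $G_1$ is unimodular, $\varphi^\omega_G|_N$ is a faithful normal semifinite trace, and for $s$ with $\Delta_G(s) = \delta$ the unitary $u := \lambda^\omega_G(s)$ is an eigenoperator, $\sigma^{\varphi^\omega_G}_t(u) = \delta^{it} u$; the KMS condition then gives $\varphi^\omega_G(u x u^*) = \delta\,\varphi^\omega_G(x)$ for $x \in N_+$, so $\check\alpha^\omega_\delta = \Ad{u}|_N$ scales this trace by $\delta \neq 1$ and hence cannot be inner. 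Combined with $Z(N) = \C 1$, the usual Fourier-coefficient computation in the twisted crossed product $N \rtimes_{(\check\alpha^\omega,u^\omega)} D$ yields $N' \cap M = \C 1$; in particular $Z(M) \subseteq N' \cap M = \C 1$, so $M$ is itself a factor and \cite{ILP98} applies. With $\widetilde\sigma$ minimal, \cite{ILP98} identifies every intermediate von Neumann algebra $N \leq P \leq M$ with some $M^K$, and \eqref{eqn:fixed_pnt_decomposition} rewrites this as $P = L_\omega(H)$.

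For the converse I argue by contraposition: if $N$ is not a factor, I would produce an explicit intermediate algebra that is not of group form using the preceding Proposition. Choose a nontrivial central projection $e \in Z(N)$ and, using non-unimodularity, a spectral unitary $u = \lambda^\omega_G(s)$ with $\delta := \Delta_G(s) \neq 1$, and set $P := (N \cup \{eu\})''$. Since $eu$ is a $\delta$-eigenoperator of $\sigma^{\varphi^\omega_G}$, the algebra $P$ is $\widetilde\sigma$-invariant and intermediate, and $\delta \in \Sd(\varphi^\omega_G|_P)$. A short computation with the generators shows that the $\delta$-spectral subspace of $\widetilde\sigma$ satisfies $P \cap M_\delta = N e u$, so the support projection of the preceding Proposition is $z_\delta = e$, the central support of $e$ in $N$; as $e \neq 1$ this violates the criterion $z_\delta = 1_{\Sd(\varphi^\omega_G|_P)}(\delta)$. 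Hence $P \neq M^K$ for every $K$, so $P$ is not of the form $L_\omega(H)$.

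The main obstacle is the minimality verification in the forward direction, i.e.\ proving $N' \cap M = \C 1$ from factoriality of $N$. Everything hinges on the trace-scaling identity $\varphi^\omega_G(uxu^*) = \delta\,\varphi^\omega_G(x)$, which is what upgrades ``$\delta \neq 1$'' to proper outerness of $\check\alpha^\omega_\delta$ and is unavailable in the unimodular case (explaining the standing non-unimodularity hypothesis). The remaining steps---transcribing the relative-commutant and $z_\delta$ computations through the $\omega$-projective generators $\lambda^\omega_G(s)$ and the identifications of Theorem~\ref{thm:cocycle_action_decomp} and Corollary~\ref{cor:forms_of_the_twisted_basic_construction}---are routine but delicate, and are precisely the parts that mirror \cite[Theorem 5.6]{GGN25}.
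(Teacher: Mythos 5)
Your proposal is correct and follows essentially the same route as the paper's (omitted) proof, namely the argument of \cite[Theorem 5.6]{GGN25} transcribed through the $\omega$-projective generators: reduce via \eqref{eqn:fixed_pnt_decomposition} to fixed-point algebras of the point modular extension, use minimality of $\widetilde\sigma$ (outerness of $\check\alpha^\omega_\delta$ from trace-scaling) plus the Galois correspondence of \cite{ILP98} for the forward direction, and refute the $z_\delta$-criterion of the preceding Proposition on $(N\cup\{eu\})''$ for the converse.
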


\section{Invariants of twisted group von Neumann algebra}\label{sec:invariats_of_twisted_group_von_Neumann_algebras}
To give the characterization of factoriality of $L_\omega(G)$, we remind the reader of the Connes' spectrum $\Gamma(\check{\alpha},u)$ of a cocycle action $(\check{\alpha},u)$ of a locally compact abelian group $H$ on a von Neumann algebra $M$. That is, $\Gamma(\check{\alpha},u)$ is defined as the kernel of the restriction of the dual action $\hat{\alpha}$ of $\widehat{H}$ to $(\check{\alpha},u)$ to the center $Z(M\rtimes_{(\check{\alpha},u)} H)$ of the twisted crossed product $M\rtimes_{(\check{\alpha},u)} H$. That is,
    \[
        \Gamma(\check{\alpha},u):=\{\gamma\in \widehat{H}: \hat{\alpha}_\gamma(z)=z \ \forall z\in Z(M\rtimes_{(\check{\alpha},u)} H)\}.
    \] 

\begin{prop}
    Let $G$ be a second countable almost unimodular group with a 2-cocycle $\omega:G \times G \to \mathbb{T}$. If we identify $L_\omega(G) \cong L_\omega(\ker\Delta_G) \rtimes_{(\check{\alpha}^\omega,u^\omega)} \Delta_G(G)$ via Theorem~\ref{thm:cocycle_action_decomp}, then we have $\Gamma(\check{\alpha}^\omega, u^\omega) = \Delta_G(G)\hat{\ }$.
\end{prop}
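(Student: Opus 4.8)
The plan is to compute $\Gamma(\check{\alpha}^\omega, u^\omega)$ directly from its definition as the kernel of the dual action on the center of the crossed product. Writing $D := \Delta_G(G)$ and $M := L_\omega(\ker\Delta_G)$, Theorem~\ref{thm:cocycle_action_decomp} identifies $M \rtimes_{(\check{\alpha}^\omega, u^\omega)} D$ with $L_\omega(G)$, so that $Z(M \rtimes_{(\check{\alpha}^\omega, u^\omega)} D) \cong Z(L_\omega(G))$. Under this identification, Corollary~\ref{cor:forms_of_the_twisted_basic_construction} shows that the dual action $\hat{\alpha}$ of $\widehat{D}$ coincides with the point modular extension $\widetilde{\sigma}\colon \widehat{D} \curvearrowright L_\omega(G)$ of the modular automorphism group $\sigma^{\varphi^\omega_G}$. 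Thus it suffices to prove that $\widetilde{\sigma}_\gamma$ fixes $Z(L_\omega(G))$ pointwise for every $\gamma \in \widehat{D}$, which immediately yields $\Gamma(\check{\alpha}^\omega, u^\omega) = \widehat{D} = \Delta_G(G)\hat{\ }$.

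The key input is the standard fact that the modular automorphism group of any faithful normal semifinite weight acts trivially on the center of the ambient von Neumann algebra; in particular $\sigma^{\varphi^\omega_G}_t(z) = z$ for all $t \in \R$ and all $z \in Z(L_\omega(G))$. Since $\widetilde{\sigma}$ extends $\sigma^{\varphi^\omega_G}$ in the sense that $\widetilde{\sigma}_{\hat{\iota}(t)} = \sigma^{\varphi^\omega_G}_t$, and since $\hat{\iota}(\R)$ is dense in $\widehat{D}$ with $\gamma \mapsto \widetilde{\sigma}_\gamma$ continuous (as used in the proof of Corollary~\ref{cor:forms_of_the_twisted_basic_construction}), I would fix $z \in Z(L_\omega(G))$ and $\gamma \in \widehat{D}$, choose a net $(t_n)$ with $\hat{\iota}(t_n) \to \gamma$, and pass to the limit in the identity $\widetilde{\sigma}_{\hat{\iota}(t_n)}(z) = z$ to conclude $\widetilde{\sigma}_\gamma(z) = z$. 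This establishes $\widehat{D} \subseteq \Gamma(\check{\alpha}^\omega, u^\omega)$, while the reverse inclusion $\Gamma(\check{\alpha}^\omega, u^\omega) \subseteq \widehat{D}$ holds by definition.

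There is essentially no serious obstacle once the two structural results are in place: the content is carried entirely by Theorem~\ref{thm:cocycle_action_decomp} and Corollary~\ref{cor:forms_of_the_twisted_basic_construction}, together with the elementary observation that the modular flow is trivial on the center. The only point requiring minor care is the continuity-and-density argument needed to promote triviality of $\sigma^{\varphi^\omega_G}$ on the center to triviality of its point modular extension $\widetilde{\sigma}$; this is precisely where the almost periodicity of $\varphi^\omega_G$ (equivalently, the almost unimodularity of $G$) enters, since it is what guarantees the existence of $\widetilde{\sigma}$ and the density of $\hat{\iota}(\R)$ in $\widehat{D}$.
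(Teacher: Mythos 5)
Your proof is correct, but it takes a genuinely different route from the paper's. The paper never touches the point modular extension in this argument: it passes to the central extension $\mathbb{T}\rtimes_{(1,\omega)}G$, restricts the \emph{untwisted} cocycle action $(\check{\alpha},u)\colon \Delta_G(G)\curvearrowright L(\mathbb{T}\rtimes_{(1,\omega)}\ker\Delta_G)$ to the corner cut out by the central projection $p_{\ker\Delta_G}$, checks that the Connes spectrum can only grow under this compression (so that $\Gamma(\check{\alpha},u)\subseteq\Gamma(\check{\beta},v)=\Gamma(\check{\alpha}^\omega,u^\omega)$), and then quotes the untwisted result \cite[Proposition 4.4]{Miy25} to get $\Gamma(\check{\alpha},u)=\Delta_G(G)\hat{\ }$. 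You instead stay entirely in the twisted picture: you use the identification, established inside the proof of Corollary~\ref{cor:forms_of_the_twisted_basic_construction}, of the dual action with the point modular extension $\widetilde{\sigma}$, the standard fact that the modular flow of a faithful normal semifinite weight fixes the center pointwise (which indeed follows from the centralizer characterization \cite[Theorem VIII.2.6]{Tak03} quoted in the preliminaries, since $Z(M)\subseteq M^\varphi$), and the same density-of-$\hat{\iota}(\R)$-plus-continuity argument that the paper already deploys in Corollary~\ref{cor:forms_of_the_twisted_basic_construction}. What the paper's route buys is reuse of the untwisted theorem and of the central-extension machinery already set up for Theorem~\ref{thm:cocycle_action_decomp}, along with the (independently useful) observation about how the Connes spectrum behaves under cutting by an invariant central projection; what your route buys is a shorter, self-contained, and more conceptual proof — the spectrum is full because the dual action is an avatar of the modular flow, which is trivial on the center — with no reduction to the untwisted case. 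One small point to watch: the correspondence between $\widetilde{\sigma}_\gamma$ and the dual action $\hat{\alpha}_\gamma$ holds only up to replacing $\gamma$ by $\gamma^{-1}$ (the pairings in the paper differ by a complex conjugate), but since the set of $\gamma$ acting trivially on the center is a subgroup of $\Delta_G(G)\hat{\ }$, hence closed under inversion, this discrepancy does not affect the computation of $\Gamma(\check{\alpha}^\omega,u^\omega)$.
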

\begin{proof}
    Assume that $\omega$ is fully normalized (see discussion before \cite[Lemma 2]{Kle74} and \cite[Proposition 2.4]{Sut80}). Denote $G_1:=\ker\Delta_G$, $D:=\Delta_G(G)$, $G(\omega) := \mathbb{T}\rtimes_{(1,\omega)} G$, and $G_1(\omega) : = \mathbb{T}\rtimes_{(1,\omega)} G_1$. Let $(\check{\alpha},u): D \curvearrowright L(G_1(\omega))$ be an cocycle action which provides the decomposition $L(G(\omega)) \cong L(G_1(\omega)) \rtimes_{(\check{\alpha}, u)} D$. Since $\check{\alpha}_\delta(p_{G_1})=p_{G_1}$ for all $\delta \in D$, $(\check{\alpha},u): D \curvearrowright L(G_1(\omega))$ can be restricted to the cocycle action $(\check{\beta},v): D \curvearrowright L(G_1(\omega))p_{G_1}$. That is, $(\check{\beta}, v)$ is the cocycle action of $D$ on $L(G_1(\omega))p_{G_1}$ defined by
    \begin{align*}
        \check{\beta}_\delta(x) &:= \check{\alpha}_\delta(x), \\
        v(\delta_1, \delta_2) &:= u(\delta_1, \delta_2)p_{G_1}, 
    \end{align*}
    where $\delta, \delta_1, \delta_2 \in D$ and $x \in L(G_1(\omega))p_{G_1}$. Theorem~\ref{thm:cocycle_action_decomp} shows that $L(G_1(\omega))p_{G_1}\rtimes_{(\check{\beta},v)} D \cong L_\omega(G_1) \rtimes_{(\check{\alpha}^\omega, u^\omega)} D $ through the isomorphism $\Omega_{G_1}$ of Proposition~\ref{prop:decomp_of_central_extension}. Therefore, it follows that $\Gamma(\check{\alpha}^\omega, u^\omega) = \Gamma(\check{\beta}, v)$. Note that $I^{\check{\alpha}}(p_{G_1})$ is a central projection in $L(G_1(\omega)) \rtimes_{(\check{\alpha}, u)}D$, on the Hilbert space $I^{\check{\alpha}}(p_{G_1})L^2(D, L^2(G_1(\omega))) = L^2(D, p_{G_1}L^2(G_1(\omega)))$, it can be shown that 
    \begin{align*}
        I^{\check{\alpha}}(x)I^{\check{\alpha}}(p_{G_1}) &= I^{\check{\alpha}}(x p_{G_1}) = I^{\check{\beta}}(x p_{G_1}), \\
        \lambda_D^u(\delta)I^{\check{\alpha}}(p_{G_1}) &= \lambda_D^v(\delta),
    \end{align*}
    where $x \in L(G_1(\omega))$ and $\delta \in D$. Hence, $(L(G_1(\omega)) \rtimes_{(\check{\alpha}, u)}D)I^{\check{\alpha}}(p_{G_1}) = L(G_1(\omega))p_{G_1}\rtimes_{(\check{\beta},v)} D$ holds. Let $\nu(\gamma) \, (\gamma \in \widehat{D})$ be the unitary used to define the action $\hat{\beta}$ dual to $(\check{\beta}, v)$ and $\mu(\gamma) \, (\gamma \in \widehat{D})$ be the one used to define the action $\hat{\alpha}$ dual to $(\check{\alpha}, u)$. Then, we have $\nu(\gamma) = \mu(\gamma)I^{\check{\alpha}}(p_{G_1}) = I^{\check{\alpha}}(p_{G_1})\mu(\gamma)$ for all $\gamma \in \widehat{D}$, and hence it follows that 
    \begin{align*}
        \hat{\beta}_\gamma(a I^{\check{\alpha}}(p_{G_1})) &= \nu(\gamma)a I^{\check{\alpha}}(p_{G_1})\nu(\gamma)^* = \mu(\gamma)I^{\check{\alpha}}(p_{G_1})aI^{\check{\alpha}}(p_{G_1})\mu(\gamma)^* \\ 
        &= \hat{\alpha}_\gamma(aI^{\check{\alpha}}(p_{G_1})) = \hat{\alpha}_\gamma(a)I^{\check{\alpha}}(p_{G_1}),
    \end{align*}
    where $a \in L(G_1(\omega)) \rtimes_{(\check{\alpha}, u)}D$ and $\gamma \in \widehat{D}$. If $\gamma \in \Gamma(\check{\alpha}, u)$, then we have
    $$\hat{\beta}_\gamma(n I^{\check{\alpha}}(p_{G_1})) = \hat{\alpha}_\gamma(n)I^{\check{\alpha}}(p_{G_1}) = nI^{\check{\alpha}}(p_{G_1})$$
    holds for all $n \in Z(L(G_1(\omega)) \rtimes_{(\check{\alpha}, u)}D)$. Note that $Z(L(G_1(\omega))p_{G_1}\rtimes_{(\check{\beta} ,v)} D) = Z((L(G_1(\omega)) \rtimes_{(\check{\alpha}, u)}D)I^{\check{\alpha}}(p_{G_1})) = Z(L(G_1(\omega)) \rtimes_{(\check{\alpha}, u)}D)I^{\check{\alpha}}(p_{G_1})$, we conclude that $\gamma \in \Gamma(\check{\beta}, v)$. By \cite[Proposition 4.4]{Miy25}, we have $\widehat{D} = \Gamma(\check{\alpha},u) \subseteq \Gamma(\check{\beta}, v) = \Gamma(\check{\alpha}^\omega, u^\omega)$. Thus, $\Gamma(\check{\alpha}^\omega, u^\omega) = \widehat{D}$ holds.
\end{proof}

The following results follows by the above result and \cite[Theorem 3.18]{Miy25} (see also \cite[Corollary XI.2.8]{Tak03}).
\begin{cor}\label{cor:factoriality_of_the_twisted_version}
    Let $G$ be a second countable almost unimodular group with a 2-cocycle $\omega:G \times G \to \mathbb{T}$. If we identify $L_\omega(G) \cong L_\omega(\ker\Delta_G) \rtimes_{(\check{\alpha}^\omega,u^\omega)} \Delta_G(G)$ via Theorem~\ref{thm:cocycle_action_decomp}, then the following conditions are equivalent.
    \begin{enumerate}
        \item[(i)] $(\check{\alpha}^\omega, u^\omega): \Delta_G(G) \curvearrowright L_\omega(\ker\Delta_G)$ is centrally ergodic.
        \item[(ii)] $L_\omega(G)$ is a factor.
    \end{enumerate}
\end{cor}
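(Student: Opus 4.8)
The plan is to deduce the equivalence directly from the general factoriality criterion for twisted crossed products of locally compact abelian groups, using the preceding Proposition as the crucial input. Note first that $\Delta_G(G)$ is a subgroup of $\R_+$, hence a second countable locally compact abelian group, so the theory of Connes spectra for cocycle actions applies to $(\check{\alpha}^\omega,u^\omega)$ as furnished by Theorem~\ref{thm:cocycle_action_decomp}.

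Before invoking the structure theorem, I would observe that central ergodicity is well-posed here. From the cocycle relation $\check{\alpha}^\omega_{\delta_1}\check{\alpha}^\omega_{\delta_2} = \Ad{u^\omega(\delta_1,\delta_2)}\,\check{\alpha}^\omega_{\delta_1\delta_2}$ together with the fact that inner automorphisms act trivially on the center, the restriction $\check{\alpha}^\omega|_{Z(L_\omega(\ker\Delta_G))}$ is a genuine action of $\Delta_G(G)$. Thus condition (i)---that $(\check{\alpha}^\omega,u^\omega)$ be centrally ergodic---is precisely the assertion $Z(L_\omega(\ker\Delta_G))^{\check{\alpha}^\omega}=\C 1$.

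Next I would invoke the general characterization of factoriality for twisted crossed products, namely \cite[Theorem 3.18]{Miy25} (compare \cite[Corollary XI.2.8]{Tak03} for the untwisted setting): the crossed product $M\rtimes_{(\check{\alpha},u)} H$ is a factor if and only if the Connes spectrum is full, $\Gamma(\check{\alpha},u)=\widehat{H}$, \emph{and} $(\check{\alpha},u)$ is centrally ergodic. Applying this with $M=L_\omega(\ker\Delta_G)$, $H=\Delta_G(G)$, and the cocycle action $(\check{\alpha}^\omega,u^\omega)$, factoriality of $L_\omega(G)$ becomes equivalent to the conjunction of $\Gamma(\check{\alpha}^\omega,u^\omega)=\Delta_G(G)\hat{\ }$ with central ergodicity.

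Finally, the preceding Proposition establishes $\Gamma(\check{\alpha}^\omega,u^\omega)=\Delta_G(G)\hat{\ }$ unconditionally, so the full-spectrum hypothesis is automatically met and drops out of the criterion. What remains is exactly central ergodicity, yielding the equivalence of (i) and (ii). I do not anticipate a genuine obstacle in this corollary: the substantive computation---verifying that the Connes spectrum is full---was already carried out in the Proposition, and the present statement is a clean application of the structure theorem once that fact is in hand. The only points requiring care are the two observations made above, that the ambient group $\Delta_G(G)$ is locally compact abelian and that central ergodicity is meaningful for the cocycle action.
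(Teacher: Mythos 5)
Your proposal is correct and follows exactly the paper's route: the paper likewise deduces the corollary by combining the preceding proposition ($\Gamma(\check{\alpha}^\omega,u^\omega)=\Delta_G(G)\hat{\ }$, unconditionally) with the factoriality criterion for twisted crossed products in \cite[Theorem 3.18]{Miy25} (cf.\ \cite[Corollary XI.2.8]{Tak03}), so that full Connes spectrum drops out and factoriality reduces to central ergodicity. Your added remarks on why central ergodicity is well-posed and why the theory applies to $\Delta_G(G)$ are fine but not part of the paper's (one-line) argument.
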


\subsection{Modular spectrum of twisted group von Neumann algebra}
In this subsection, we study the modular spectrum of $\text{S}(L_\omega(G))$ when $L_\omega(G)$ is a factor. First, we recall the following Theorem due to Connes \cite{Con73}. We denote the set of projections of a von Neumann algebra $M$ by $M^P$. 

\begin{thm}[{\!\!\hspace{.1 cm}\cite[Corollary 3.2.5(b)]{Con73}}]\label{thm:Connes_Sinvariant_formula}
    Let $M$ be a factor and $\varphi$ be a normal faithful semifinite weight on $M$. Then, 
        \[
            \S(M)\backslash\{0\}=\bigcap_{0\ne e\in Z(M^{\varphi})^P}\Sp(\Delta_{\varphi_e})\backslash\{0\}
        \]
    where $\varphi_e$ is a weight on $M_e=eMe$ defined by $\varphi_e(x)=\varphi(x)$ for all $x\in M_e$.
\end{thm}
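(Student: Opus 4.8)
The plan is to prove the two inclusions separately, translating the right-hand side into the language of the Arveson and Connes spectra of the modular flow. Throughout fix the faithful normal semifinite weight $\varphi$ on the factor $M$ and abbreviate $R:=\bigcap_{0\ne e\in Z(M^\varphi)^P}\Sp(\Delta_{\varphi_e})$, recalling that by definition $\S(M)=\bigcap_\psi\Sp(\Delta_\psi)$, the intersection running over \emph{all} faithful normal semifinite weights $\psi$ on $M$. Note first that for a nonzero $e\in Z(M^\varphi)^P$ one has $\sigma^\varphi_t(e)=e$, so $\varphi_e$ is a faithful normal semifinite weight on the corner $M_e=eMe$ with $\sigma^{\varphi_e}=\sigma^\varphi|_{M_e}$, and $M_e$ is again a factor since $e$ has central support $1$ in $M$.

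For the inclusion $\S(M)\backslash\{0\}\subseteq R\backslash\{0\}$ I would argue concretely. Since the modular spectrum $\S$ is invariant under reduction by a nonzero projection in a factor, one has $\S(M_e)=\S(M)$. Applying the definition of $\S$ to the single weight $\varphi_e$ on the factor $M_e$ gives $\S(M_e)\subseteq\Sp(\Delta_{\varphi_e})$, whence $\S(M)\subseteq\Sp(\Delta_{\varphi_e})$. As $e$ was arbitrary this yields $\S(M)\subseteq R$, and intersecting with $\R_+$ gives the claimed inclusion.

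For the reverse inclusion I would first record the standard relation $\Sp(\Delta_{\varphi_e})\backslash\{0\}=\exp\bigl(\Sp(\sigma^{\varphi_e})\bigr)$ between the spectrum of the modular operator and the Arveson spectrum of the modular automorphism group. Combined with $\sigma^{\varphi_e}=\sigma^\varphi|_{M_e}$ and the standard fact that the Connes spectrum of a flow may be computed by intersecting the Arveson spectra of the reductions over central projections of the fixed-point algebra, this rewrites the right-hand side as $R\backslash\{0\}=\exp\bigl(\Gamma(\sigma^\varphi)\bigr)$, where $\Gamma(\sigma^\varphi)$ is the Connes spectrum of $\sigma^\varphi$. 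It then suffices to show $\exp\bigl(\Gamma(\sigma^\varphi)\bigr)\subseteq\S(M)$: given an arbitrary faithful normal semifinite weight $\psi$ and $\lambda\in\exp(\Gamma(\sigma^\varphi))$, I would invoke Connes' cocycle Radon--Nikodym theorem to write $\sigma^\psi_t=\Ad{u_t}\circ\sigma^\varphi_t$ for the unitary cocycle $u_t=(D\psi:D\varphi)_t\in M$, and then use that the Connes spectrum is unchanged under such an inner perturbation, so $\Gamma(\sigma^\psi)=\Gamma(\sigma^\varphi)$. Since $\Gamma(\sigma^\psi)\subseteq\Sp(\sigma^\psi)=\log\bigl(\Sp(\Delta_\psi)\backslash\{0\}\bigr)$ (the Connes spectrum is contained in the full Arveson spectrum, i.e. the case $e=1$), this forces $\lambda\in\Sp(\Delta_\psi)$; as $\psi$ was arbitrary, $\lambda\in\S(M)$.

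The main obstacle is exactly the two spectral inputs invoked above: the invariance of $\S$ under reduction, and the invariance of the Connes spectrum $\Gamma$ under the inner (cocycle) perturbation relating two modular flows. These are the technical heart of Connes' type $\III$ classification, and a fully self-contained treatment requires his spectral theory of one-parameter automorphism groups. Since the statement is precisely \cite[Corollary 3.2.5(b)]{Con73}, I would ultimately cite that result for these two facts, using the concrete easy inclusion and the translation $R\backslash\{0\}=\exp(\Gamma(\sigma^\varphi))$ above to orient the reader and make the role of the hypothesis transparent.
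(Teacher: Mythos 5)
The first thing to say is that the paper contains no proof of this statement: Theorem~\ref{thm:Connes_Sinvariant_formula} is imported verbatim from Connes \cite{Con73} (Corollaire 3.2.5(b)) and used as a black box, so there is no in-paper argument to compare yours against, and your sketch can only be judged on its own terms. On those terms it is essentially correct as a reduction to Connes' machinery, and it bottoms out in the same citation the paper itself relies on. Your translation $R\backslash\{0\}=\exp\bigl(\Gamma(\sigma^\varphi)\bigr)$ is valid: for $0\ne e\in Z(M^\varphi)^P$ one has $\sigma^{\varphi_e}=\sigma^\varphi|_{M_e}$ and $\Sp(\Delta_{\varphi_e})\backslash\{0\}=\exp\bigl(\Sp(\sigma^{\varphi_e})\bigr)$, and restricting the defining intersection for $\Gamma$ to central projections of the fixed-point algebra is a true general fact about flows (the Arveson spectrum of a reduction depends only on the central support of the reducing projection in $M^\varphi$, since $e\mathcal{X}e\ne0$ for every nonzero $M^\varphi$-bimodule $\mathcal{X}$ under the central support of $e$, in particular for spectral subspaces). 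Likewise, the hard inclusion $\exp\bigl(\Gamma(\sigma^\varphi)\bigr)\subseteq\S(M)$ via the cocycle $(D\psi:D\varphi)_t$ and the invariance of $\Gamma$ under exterior equivalence is precisely Connes' route to the weight-independence of $\Gamma(\sigma^\varphi)$. The one structural weakness is your ``easy'' direction: it rests on $\S(M_e)=\S(M)$, which is not a lighter input than the theorem itself --- in \cite{Con73} that reduction-invariance is a corollary of the same circle of results (both it and the present formula flow from $\S(M)\backslash\{0\}=\exp\Gamma(\sigma^\varphi)$), and its nontrivial half is exactly the nontrivial half of the statement you are proving, namely manufacturing from $\lambda\notin\Sp(\Delta_{\varphi_e})$ a weight on all of $M$ whose modular spectrum misses $\lambda$. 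So your write-up should be read as a faithful exposition of where the formula sits inside Connes' spectral theory rather than an independent derivation; since you are explicit that the two deep inputs are cited to \cite{Con73}, this is acceptable and is, in substance, what the paper does by quoting the result outright.
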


Also, we use the following result which is shown in \cite{Miy25}. Let $G$ be an almost unimodular group. By \cite[II, Proposition 3.1.7]{Sut80}, we can identify $L(G) \cong L(\ker\Delta_G) \rtimes_{(\check{\alpha}, u)} \Delta_G(G)$, where $(\check{\alpha}, u)$ is associated with an arbitrary section $\sigma: \Delta_G(G) \to G$. We call a section $\sigma$ inverse preserving when $\sigma(\delta^{-1})=\sigma(\delta)^{-1}$ holds for all $\delta \in \Delta_G(G)$. In this case, inverse preserving sections always exists (see \cite[Remark 5.2]{Miy25}).

\begin{lem}[{\!\!\hspace{.1 cm}\cite[Lemma 5.4]{Miy25}}]\label{lem:spectrum_formula}
     Let $G$ be a second countable almost unimodular group. We identify $L(G) \cong L(\ker\Delta_G)\rtimes_{(\check{\alpha}, u)} \Delta_G(G)$, where $(\check{\alpha}, u)$ is associated with an inverse preserving section $\sigma: \Delta_G(G) \to G$. Then, for each nonzero $e\in Z(L(G)^{\varphi_G})^P=Z(L(\ker\Delta_G))^P$, 
        \[
            \Sp(\Delta_{(\varphi_G)_e})\backslash\{0\}=\overline{\{\delta\in\Delta_G(G):\check{\alpha}_\delta(e)e\ne0\}}\backslash\{0\}.
        \]
\end{lem}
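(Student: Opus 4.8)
The plan is to reduce the computation of $\Sp(\Delta_{(\varphi_G)_e})$ to an identification of its point spectrum, and then to read that point spectrum off from the crossed product structure. First I would record that since $e\in Z(L(G)^{\varphi_G})=Z(L(\ker\Delta_G))$, the projection $e$ is fixed by $\sigma^{\varphi_G}$, so the modular automorphism group of the corner weight $(\varphi_G)_e$ on $eL(G)e$ is the restriction $\sigma^{\varphi_G}_t|_{eL(G)e}$ (a standard fact for corners by a central projection of the centralizer). Because $G$ is almost unimodular, $\varphi_G$ is almost periodic, hence so is $(\varphi_G)_e$; thus $\Delta_{(\varphi_G)_e}$ is diagonalizable and $\Sp(\Delta_{(\varphi_G)_e})$ is exactly the closure in $[0,\infty)$ of its point spectrum $\Sd((\varphi_G)_e)$, the latter viewed inside $\R_+$ via the inclusion $\iota\colon \Delta_G(G)\hookrightarrow \R_+$. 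So it suffices to show $\Sd((\varphi_G)_e)=\{\delta\in\Delta_G(G):\check{\alpha}_\delta(e)e\ne0\}$ and then take closures and delete $0$.

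Second, I would exploit discreteness. Since $\ker\Delta_G$ is open, $D:=\Delta_G(G)\cong G/\ker\Delta_G$ is discrete, so $L(G)\cong L(\ker\Delta_G)\rtimes_{(\check{\alpha},u)}D$ is a \emph{discrete} twisted crossed product and every element admits a Fourier-type expansion $\sum_{\delta\in D}I^{\check{\alpha}}(y_\delta)\lambda^u_D(\delta)$ convergent in the strong operator topology. As $\sigma^{\varphi_G}_t$ fixes $I^{\check{\alpha}}(L(\ker\Delta_G))$ and scales $\lambda^u_D(\delta)$ by $\delta^{it}$, an element is a $\delta_0$-eigenoperator of $\sigma^{\varphi_G}$ if and only if it has the single-term form $I^{\check{\alpha}}(y)\lambda^u_D(\delta_0)$ with $y\in L(\ker\Delta_G)$; here I invoke \cite[Lemma 1.4]{GGLN25} to reduce to eigenoperators.

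Third, I would compute the compression. Using the covariance relation $\lambda^u_D(\delta)I^{\check{\alpha}}(z)=I^{\check{\alpha}}(\check{\alpha}_\delta(z))\lambda^u_D(\delta)$ together with the centrality of $e$ in $L(\ker\Delta_G)$, one obtains
\[
    e\,I^{\check{\alpha}}(y)\lambda^u_D(\delta_0)\,e = I^{\check{\alpha}}\big(e\,y\,\check{\alpha}_{\delta_0}(e)\big)\lambda^u_D(\delta_0).
\]
Hence a nonzero $\delta_0$-eigenoperator lies in $eL(G)e$ for some $y$ precisely when $e\,\check{\alpha}_{\delta_0}(e)\ne0$, i.e. $\check{\alpha}_{\delta_0}(e)e\ne0$ (these central projections commute, and since $\check{\alpha}_{\delta}\check{\alpha}_{\delta^{-1}}$ is inner and fixes the central $e$, replacing $\delta_0$ by $\delta_0^{-1}$ leaves this set invariant, so the exact convention in the covariance relation is immaterial). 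This identifies $\Sd((\varphi_G)_e)$ with $\{\delta:\check{\alpha}_\delta(e)e\ne0\}$, and passing to closures in $\R_+$ yields the claimed formula.

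The step I expect to be the main obstacle is the bookkeeping of topology in the first and last paragraphs: $D$ is discrete as an abstract group, yet $\iota(D)$ may be dense in $\R_+$, so the point spectrum is a set of eigenvalues that is discrete at the group level but whose closure in $\R_+$ recovers the full spectrum; one must be careful that the almost periodicity of $(\varphi_G)_e$ genuinely gives $\Sp(\Delta_{(\varphi_G)_e})=\overline{\Sd((\varphi_G)_e)}$ and that no spurious continuous spectrum intervenes. A secondary technical point is justifying that every eigenoperator of the corner arises as a compression $eze$ of a same-eigenvalue eigenoperator $z$ of $L(G)$, which again uses that $e$ is $\sigma^{\varphi_G}$-fixed together with the Fourier expansion above.
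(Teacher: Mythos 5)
The paper does not actually prove this lemma: it is imported verbatim from \cite[Lemma 5.4]{Miy25} (the second author's earlier paper), so there is no in-paper argument to compare yours against; the only thing the present paper does with it is reduce the twisted version (Lemma \ref{lem:spectrum_of_modular_operator}) to it via the central extension $\mathbb{T}\rtimes_{(1,\omega)}G$. Judged on its own merits, your reconstruction is sound and almost certainly close in spirit to the original: corner by a centralizer projection, almost periodicity forcing $\Sp(\Delta_{(\varphi_G)_e})=\overline{\Sd((\varphi_G)_e)}$, Fourier support analysis in the discrete crossed product to identify eigenoperators as single terms $I^{\check{\alpha}}(y)\lambda^u_D(\delta_0)$, and the covariance computation $e\,I^{\check{\alpha}}(y)\lambda^u_D(\delta_0)\,e = I^{\check{\alpha}}(ey\check{\alpha}_{\delta_0}(e))\lambda^u_D(\delta_0)$, which is correct in the paper's conventions, as is your observation that $\{\delta:\check{\alpha}_\delta(e)e\ne0\}$ is inversion-invariant because $\check{\alpha}_\delta\check{\alpha}_{\delta^{-1}}$ is inner.

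Two soft spots deserve repair, though neither is fatal. First, the Fourier series of a general element of a discrete crossed product need not converge in the strong operator topology; what your argument actually requires is only that the coefficient maps $x\mapsto \E(x\,\lambda^u_D(\delta)^*)$ (via the canonical expectation onto $L(\ker\Delta_G)$) are jointly injective and that $\sigma^{\varphi_G}_t$ multiplies the $\delta$-th coefficient by $\delta^{it}$; phrase it that way and the SOT claim can be dropped. Second, the bridge between the \emph{point spectrum} of $\Delta_{(\varphi_G)_e}$ and the existence of nonzero \emph{eigenoperators} in $eL(G)e$ is not supplied by \cite[Lemma 1.4]{GGLN25} alone (generation by eigenoperators does not by itself say every point-spectrum value is realized); you need \cite[Lemma 2.1]{GGLN25}, which produces eigen-partial-isometries for each $\delta\in\Sd$, for one direction, and for the converse a cutting argument (compress $y=e\check{\alpha}_{\delta_0}(e)$ by projections of finite weight in $L(\ker\Delta_G)$, using strict semifiniteness) so that the eigenoperator lands in $\sqrt{\dom}((\varphi_G)_e)$ and hence yields a genuine $L^2$-eigenvector. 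With those two adjustments your proof is complete.
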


We provide a twisted version of the above lemma. As it can be seen from the proof of Theorem \ref{thm:cocycle_action_decomp}, the decomposition $L_\omega(G) \cong L_\omega(\ker\Delta_G) \rtimes_{(\check{\alpha}^\omega, u^\omega)} \Delta_G(G)$ is associated with a section $\sigma: \Delta_G(G) \to G$. Moreover, if $\sigma: \Delta_G(G) \to G$ is inverse preserving, then the section $\overline{\sigma}: \Delta_G(G) \to G(\omega)$ defined by $\overline{\sigma}(\delta) := (1, \sigma(\delta))$ is also inverse preserving.

\begin{lem}\label{lem:spectrum_of_modular_operator}
    Let $G$ be a second countable almost unimodular group with a 2-cocycle $\omega:G \times G \to \mathbb{T}$. We identify $L_\omega(G) \cong L_\omega(\ker\Delta_G) \rtimes_{(\check{\alpha}^\omega, u^\omega)} \Delta_G(G)$, where $(\check{\alpha}^\omega, u^\omega)$ is associated with an inverse preserving section $\sigma: \Delta_G(G) \to G$. Then, for each nonzero $e\in Z(L_\omega(G)^{\varphi_G^\omega})^P$, we have 
        \[
            \Sp(\Delta_{(\varphi_G^\omega)_e})\backslash\{0\}=\overline{\{\delta\in\Delta_G(G):\check{\alpha}^\omega_\delta(e)e\ne0\}}\backslash\{0\}.
        \]
\end{lem}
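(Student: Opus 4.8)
The plan is to deduce the twisted statement from the untwisted Lemma~\ref{lem:spectrum_formula}, applied to the central extension $G(\omega) := \mathbb{T}\rtimes_{(1,\omega)} G$, by transporting everything through the corner realization $\Omega_G$ of Proposition~\ref{prop:decomp_of_central_extension}. Write $G_1 := \ker\Delta_G$, $D := \Delta_G(G)$, and $G_1(\omega) := \mathbb{T}\rtimes_{(1,\omega)} G_1$, and assume $\omega$ fully normalized with $\sigma$ inverse preserving, so that $\overline{\sigma}(\delta) = (1,\sigma(\delta))$ is an inverse-preserving section for $\Delta_{G(\omega)}\colon G(\omega)\to D$. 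Since $G$ is almost unimodular, so is $G(\omega)$, with $\ker\Delta_{G(\omega)} = G_1(\omega)$ and $\Delta_{G(\omega)}(G(\omega)) = D$; thus Lemma~\ref{lem:spectrum_formula} applies to $G(\omega)$ with the cocycle action $(\check{\alpha},u)\colon D\curvearrowright L(G_1(\omega))$ built in the proof of Theorem~\ref{thm:cocycle_action_decomp}, where recall $\check{\alpha}^\omega_\delta = \Omega_{G_1}\circ\check{\alpha}_\delta\circ\Omega_{G_1}^{-1}$.

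Starting from $e\in Z(L_\omega(G)^{\varphi^\omega_G})^P = Z(L_\omega(G_1))^P$ (the equality holds because the modular flow $\sigma^{\varphi^\omega_G}_t(\lambda^\omega_G(s)) = \Delta_G(s)^{it}\lambda^\omega_G(s)$ has centralizer $L_\omega(G_1)$), I would set $E := \Omega_{G_1}^{-1}(e)\in L(G_1(\omega))p_{G_1}$, identifying $L_\omega(G_1)$ inside $L_\omega(G)$ via the inclusion $\Phi$ of \cite[Proposition 2.6]{GG25}. The crux is to see that this single projection serves in both pictures. Two compatibilities drive this: first, $\Omega_G\circ\iota_1 = \Phi\circ\Omega_{G_1}$ on $L(G_1(\omega))$, where $\iota_1\colon L(G_1(\omega))\hookrightarrow L(G(\omega))$ is induced by the inclusion $G_1(\omega)\leq G(\omega)$ (open, as $G_1$ is open); and second, $\iota_1(p_{G_1}) = p_G$, which is immediate since both projections are given by the same integral \eqref{eqn:projection_onto_the_twist} over the common central $\mathbb{T}$ and $\iota_1$ is normal with $\iota_1(\lambda_{G_1(\omega)}(b,e)) = \lambda_{G(\omega)}(b,e)$. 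Because $E\leq p_{G_1}$ is central in the corner $L(G_1(\omega))p_{G_1}$ and $p_{G_1}\in Z(L(G_1(\omega)))$, the projection $E$ is in fact central, i.e. $E\in Z(L(G_1(\omega)))^P = Z(L(G(\omega))^{\varphi_{G(\omega)}})^P$; moreover $\iota_1(E)\leq \iota_1(p_{G_1}) = p_G$ together with $\Omega_G(\iota_1(E)) = \Phi(\Omega_{G_1}(E)) = e$ identifies $\iota_1(E) = \Omega_G^{-1}(e)$. Lemma~\ref{lem:spectrum_formula} for $G(\omega)$ then yields $\Sp(\Delta_{(\varphi_{G(\omega)})_E})\setminus\{0\} = \overline{\{\delta\in D:\check{\alpha}_\delta(E)E\neq 0\}}\setminus\{0\}$.

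It remains to transport both sides. For the spectrum, $\Omega_G$ restricts to a $*$-isomorphism $L(G(\omega))p_G\to L_\omega(G)$ carrying $(\varphi_{G(\omega)})_{p_G}$ to $\varphi^\omega_G$, so a further reduction by $E\leq p_G$ gives a $*$-isomorphism $EL(G(\omega))E\to eL_\omega(G)e$ carrying $(\varphi_{G(\omega)})_E$ to $(\varphi^\omega_G)_e$; since the spectrum of the modular operator of a weight is a $*$-isomorphism invariant, $\Sp(\Delta_{(\varphi^\omega_G)_e}) = \Sp(\Delta_{(\varphi_{G(\omega)})_E})$. For the index set, using $\check{\alpha}_\delta(p_{G_1}) = p_{G_1}$ and $E\leq p_{G_1}$ one gets $\check{\alpha}^\omega_\delta(e)e = \Omega_{G_1}(\check{\alpha}_\delta(E)E)$, and injectivity of $\Omega_{G_1}$ on $L(G_1(\omega))p_{G_1}$ gives $\check{\alpha}^\omega_\delta(e)e\neq 0$ if and only if $\check{\alpha}_\delta(E)E\neq 0$. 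Substituting both identifications into the displayed formula finishes the proof. The one genuinely delicate point is the simultaneous use of the two central projections $p_{G_1}$ and $p_G$: one must check that $\Omega_{G_1}^{-1}(e)$ and $\Omega_G^{-1}(e)$ are literally the same operator under $\iota_1$, which is exactly what $\iota_1(p_{G_1}) = p_G$ and $\Omega_G\circ\iota_1 = \Phi\circ\Omega_{G_1}$ guarantee; the remaining spectral transport is routine modular theory.
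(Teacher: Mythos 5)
Your proof is correct and takes essentially the same route as the paper's: both apply the untwisted Lemma~\ref{lem:spectrum_formula} to the central extension $G(\omega)=\mathbb{T}\rtimes_{(1,\omega)}G$ with the inverse-preserving section $\overline{\sigma}$, then transport both sides through the corner isomorphism $\Omega_G$ of Proposition~\ref{prop:decomp_of_central_extension}, using $\check{\alpha}_\delta(p_{G_1})=p_{G_1}$ and injectivity of $\Omega_{G_1}$ on $L(G_1(\omega))p_{G_1}$. Your explicit compatibilities $\Omega_G\circ\iota_1=\Phi\circ\Omega_{G_1}$ and $\iota_1(p_{G_1})=p_G$ are just a more detailed spelling-out of what the paper phrases as ``$\Omega_G$ and $\Omega_{G_1}$ agree on $(L(G(\omega))p_G)^{(\varphi_{G(\omega)})_{p_G}}\cong L(G_1(\omega))p_{G_1}$,'' so no gap remains.
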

\begin{proof}
    Assume that $\omega$ is fully normalized (see discussion before \cite[Lemma 2]{Kle74} and \cite[Proposition 2.4]{Sut80}). Denote $G_1:=\ker\Delta_G$, $G(\omega) := \mathbb{T}\rtimes_{(1,\omega)} G$, and $G_1(\omega) : = \mathbb{T}\rtimes_{(1,\omega)} G_1$.  Fix a nonzero projection $e \in Z(L_\omega(G)^{\varphi_G^\omega})^P$. Let $\Omega_G: L(G(\omega))p_G \to L_\omega(G)$ be the isomorphism obtained by Proposition \ref{prop:decomp_of_central_extension}. We identify $L(G(\omega))^{\varphi_{G(\omega)}} = \{\lambda_{G(\omega)}(a,s) : (a,s) \in G_1(\omega)\}''$ with $L(G_1(\omega))$, and we identify $L_\omega(G)^{\varphi_G^\omega}$ with $L_\omega(G_1)$ (see \cite[Proposition 2.1, Proposition 2.6]{GG25}). With these identifications, $\Omega_G$ and $\Omega_{G_1}$ agree on 
    $$(L(G(\omega))p_G)^{(\varphi_{G(\omega)})_{p_G}} \cong L(G_1(\omega))p_{G_1}.$$ 
    We know that $\varphi_G^\omega = (\varphi_{G(\omega)})_{p_G} \circ \Omega_G^{-1}$ and $p_G \in Z(L(G(\omega))) \subseteq Z(L(G(\omega))^{\varphi_{G(\omega)}}) \cong Z(L(G_1(\omega)))$. Therefore, it follows that $\Omega_G^{-1}(e) \in Z((L(G(\omega))p_G)^{(\varphi_{G(\omega)})_{p_G}}) = Z(L(G(\omega))^{\varphi_{G(\omega)}}p_G) = Z(L(G(\omega))^{\varphi_{G(\omega)}})p_G \cong Z(L(G_1(\omega)))p_{G_1} \subseteq Z(L(G_1(\omega)))$. 
    Moreover, we have $(\varphi_G^\omega)_e = (\varphi_{G(\omega)})_{p_G \Omega_G^{-1}(e)} \circ \Omega_G^{-1}$. Since $\Omega_G$ provides the spatial isomorphism, we see that $\Sp(\Delta_{(\varphi_G^\omega)_e})\backslash\{0\} = \Sp(\Delta_{(\varphi_{G(\omega)})_{p_G \Omega_G^{-1}(e)}})\backslash\{0\}$. 
    Note that $p_G \Omega_G^{-1}(e)$ corresponds to $p_{G_1} \Omega_{G_1}^{-1}(e)$ under the above identification. 
    By Lemma \ref{lem:spectrum_formula}, we have
    \begin{align*}
        \Sp(\Delta_{(\varphi_G^\omega)_e})\backslash\{0\} &= \overline{\{\delta\in\Delta_G(G):\check{\alpha}_\delta(p_{G_1} \Omega_{G_1}^{-1}(e))p_{G_1} \Omega_{G_1}^{-1}(e)\ne0\}}\backslash\{0\} 
        \\ &= \overline{\{\delta\in\Delta_G(G):\Omega_{G_1}^{-1}(\Omega_{G_1}(\check{\alpha}_\delta(\Omega_G^{-1}(e))) e)\ne0\}}\backslash\{0\}
        \\ &= \overline{\{\delta\in\Delta_G(G):\Omega_{G_1}(\check{\alpha}_\delta(\Omega_G^{-1}(e))) e\ne0\}}\backslash\{0\}
        \\ &= \overline{\{\delta\in\Delta_G(G):\check{\alpha}_\delta^\omega(e)e\ne0\}}\backslash\{0\}.
    \end{align*}
    Here, we use the facts that $\check{\alpha}_\delta(p_{G_1}) = p_{G_1}$ for all $\delta \in \Delta_G(G)$ and $\Omega_{G_1}(p_{G_1}) = 1_{L_\omega(G_1)}$.
\end{proof}

\begin{prop}
    With the same notation as in Lemma \ref{lem:spectrum_of_modular_operator}, we have 
    \begin{align*}
         \text{S}(L_\omega(G)) \backslash \{0\} &= \bigcap_{0 \ne e\in Z(L_\omega(\ker\Delta_G))^P} \overline{ \{\delta \in \Delta_G(G) : \check{\alpha}^\omega_\delta(e)e \ne 0 \}} \backslash \{0\}
         \\ & \supseteq \overline{\{ k \in \Delta_G(G) : \check{\alpha}^\omega_\delta|_{Z(L_\omega(\ker\Delta_G))} = \emph{id} \}} \backslash \{0\}.
     \end{align*}
     Moreover, if $\Delta_G(G)$ is discrete in $\R_+$ (or equivalently $\Delta_G(G)$ is singly generated), then  
     $$\text{S}(L_\omega(G)) \backslash \{0\} = \{ \delta \in \Delta_G(G) : \check{\alpha}^\omega_\delta|_{Z(L_\omega(\ker\Delta_G))} = \emph{id} \} \backslash \{0\}.$$
\end{prop}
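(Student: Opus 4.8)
The plan is to combine Connes' formula (Theorem~\ref{thm:Connes_Sinvariant_formula}) with the spectral computation of Lemma~\ref{lem:spectrum_of_modular_operator}, and then to analyze the resulting intersection of subsets of $\R_+$ by reducing to a statement about automorphisms of an abelian von Neumann algebra. Throughout I work under the standing assumption of this subsection that $L_\omega(G)$ is a factor, together with the identification $L_\omega(G)^{\varphi^\omega_G}\cong L_\omega(\ker\Delta_G)$, so that $Z(L_\omega(G)^{\varphi^\omega_G})^P = Z(L_\omega(\ker\Delta_G))^P$.

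First I would establish the displayed equality. Applying Theorem~\ref{thm:Connes_Sinvariant_formula} to the factor $M=L_\omega(G)$ with $\varphi=\varphi^\omega_G$ gives $\S(L_\omega(G))\backslash\{0\} = \bigcap_{0\ne e} \Sp(\Delta_{(\varphi^\omega_G)_e})\backslash\{0\}$, the intersection running over nonzero $e\in Z(L_\omega(\ker\Delta_G))^P$; substituting $\Sp(\Delta_{(\varphi^\omega_G)_e})\backslash\{0\}=\overline{\{\delta : \check\alpha^\omega_\delta(e)e\ne0\}}\backslash\{0\}$ from Lemma~\ref{lem:spectrum_of_modular_operator} reproduces the first line verbatim. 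For the inclusion $\supseteq$, observe that if $\check\alpha^\omega_\delta$ fixes $Z(L_\omega(\ker\Delta_G))$ pointwise then $\check\alpha^\omega_\delta(e)e = e \ne 0$ for every nonzero projection $e$; hence such a $\delta$ lies in each set $\{\delta : \check\alpha^\omega_\delta(e)e\ne0\}$, therefore in each closure and in the intersection. Passing to closures and deleting $0$ yields the claimed containment.

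For the ``moreover'' clause I would first record that a discrete subgroup of $(\R_+,\times)\cong(\R,+)$ is closed and either trivial or infinite cyclic (equivalently, singly generated), and in particular has no accumulation point in $\R_+$; consequently every subset of $\Delta_G(G)$ is already closed in $\R_+$, so all the closures appearing in the formula are redundant. It therefore suffices to prove the set-theoretic identity $\bigcap_{0\ne e} \{\delta : \check\alpha^\omega_\delta(e)e\ne0\} = \{\delta : \check\alpha^\omega_\delta|_{Z(L_\omega(\ker\Delta_G))}=\mathrm{id}\}$, whose inclusion $\supseteq$ is exactly the computation just made; the reverse inclusion is the content of the final step.

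The heart of the matter, and the step I expect to be the main obstacle, is the following fact about the abelian von Neumann algebra $Z:=Z(L_\omega(\ker\Delta_G))$: an automorphism $\theta$ of $Z$ satisfies $\theta(e)e\ne0$ for every nonzero projection $e$ if and only if $\theta=\mathrm{id}$. Since $\check\alpha^\omega_\delta\in\Aut(L_\omega(\ker\Delta_G))$ preserves the center, it restricts to such a $\theta$ on $Z$, so this fact closes the gap. The nontrivial direction is proved by contraposition: realize $Z\cong L^\infty(X,\mu)$ for a standard Borel space $X$ with $\theta$ implemented by a nonsingular Borel automorphism $T$ of $X$, and suppose $T\ne\mathrm{id}$ on a set of positive measure. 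Fixing a compatible metric $d$ on $X$ and setting $E_n=\{x:d(x,Tx)>1/n\}$, some $E_n$ has positive measure; covering $X$ by countably many balls of radius $1/(3n)$, one such ball $B$ satisfies $\mu(B\cap E_n)>0$, and then $A:=B\cap E_n$ has $\mathrm{diam}(A)<1/n$ while $d(x,Tx)>1/n$ on $A$, forcing $T(A)\cap A=\emptyset$. Thus the nonzero projection $e=1_A$ has disjoint support from its image and satisfies $\theta(e)e=0$, producing a witness that excludes every $\delta$ with $\check\alpha^\omega_\delta|_Z\ne\mathrm{id}$ from the intersection. This completes the reverse inclusion, and hence the discrete case.
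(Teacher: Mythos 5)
Your proof is correct and follows essentially the paper's route: the displayed equality comes from combining Theorem~\ref{thm:Connes_Sinvariant_formula} with Lemma~\ref{lem:spectrum_of_modular_operator} under the subsection's standing factoriality assumption, and both the containment and the discrete case reduce, exactly as in the paper, to the set identity $\bigcap_{0\ne e}\{\delta\in\Delta_G(G):\check{\alpha}^\omega_\delta(e)e\ne 0\}=\{\delta\in\Delta_G(G):\check{\alpha}^\omega_\delta|_{Z(L_\omega(\ker\Delta_G))}=\mathrm{id}\}$. The only difference is that the paper omits the proof of this identity, citing its similarity to \cite[Lemma 5.5]{Miy25}, whereas you supply a self-contained and correct proof via point realization of $Z(L_\omega(\ker\Delta_G))\cong L^\infty(X,\mu)$ on a standard space (legitimate here since $G$ is second countable, so the predual is separable) together with the standard argument producing a positive-measure set $A$ with $T(A)\cap A=\emptyset$, hence a nonzero projection $e$ with $\check{\alpha}^\omega_\delta(e)e=0$ whenever $\check{\alpha}^\omega_\delta|_{Z(L_\omega(\ker\Delta_G))}\ne\mathrm{id}$.
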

\begin{proof}
    Theorem \ref{thm:Connes_Sinvariant_formula} and Lemma \ref{lem:spectrum_of_modular_operator} imply the equation $$\text{S}(L_\omega(G)) \backslash \{0\} = \bigcap_{0 \ne e\in Z(L_\omega(\ker\Delta_G))^P} \overline{ \{\delta \in \Delta_G(G) : \check{\alpha}^\omega_\delta(e)e \ne 0 \}} \backslash \{0\}.$$ 
    The proof of $\bigcap_{0\ne e\in Z(L_\omega(\ker\Delta_G))^P}\{\delta \in \Delta_G(G):\check{\alpha}^\omega_\delta(e)e\ne0\}=\{\delta \in \Delta_G(G):\check{\alpha}^\omega_\delta|_{Z(L(\ker\Delta_G))}=\text{id}\}$, which we omit, is similarly to the proof of \cite[Lemma 5.5]{Miy25}.
\end{proof}

\subsection{T-invariant of twisted group von Neumann algebra}

In this subsection, we study the T-invariant $\text{T}(L_\omega(G))$. First, we provide the characterization of the semifiniteness of the twisted group von Neumann algebra $L_\omega(G)$ through the twisted crossed product decomposition (Theorem \ref{thm:cocycle_action_decomp}).

\begin{prop}\label{prop:condition_of_semifiniteness}
    Let $G$ be a second countable almost unimodular group with a 2-cocycle $\omega:G \times G \to \mathbb{T}$. We identify $L_\omega(G) \cong L_\omega(\ker\Delta_G) \rtimes_{(\check{\alpha}^\omega,u^\omega)} \Delta_G(G)$ with a cocycle action $(\check{\alpha}^\omega,u^\omega): \Delta_G(G) \curvearrowright L_\omega(\ker\Delta_G)$ obtained by Theorem \ref{thm:cocycle_action_decomp}. Then, the following conditions are equivalent.
    \begin{enumerate}
        \item [(i)] $L_\omega(G)$ is semifinite. 
        \item [(ii)] There exists a one parameter unitary group $(v(t))_{t\in \R} \subseteq Z(L_\omega(\ker\Delta_G))$ which satisfies $\check{\alpha}^\omega_\delta(v(t))=\delta^{-it}v(t)$ for all $\delta\in \Delta_G(G)$ and $t \in \R$.
    \end{enumerate}
\end{prop}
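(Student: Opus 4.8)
The plan is to reduce everything to the standard fact that a von Neumann algebra is semifinite precisely when the modular automorphism group of one (hence every) faithful normal semifinite weight is inner (cf.\ \cite{Tak03}), and then to read off what ``inner'' means in the crossed product coordinates of Theorem~\ref{thm:cocycle_action_decomp}. Write $N := L_\omega(G)$, $M := L_\omega(\ker\Delta_G)$, $D := \Delta_G(G)$ and $\tau := \varphi^\omega_{\ker\Delta_G}$. First I would record the coordinate data: since $\ker\Delta_G$ is unimodular, $\tau$ is a faithful normal semifinite \emph{trace}; under the identification $N \cong M \rtimes_{(\check{\alpha}^\omega,u^\omega)} D$ the subalgebra $I^{\check{\alpha}^\omega}(M)$ is exactly the centralizer $N^{\varphi^\omega_G} = L_\omega(\ker\Delta_G)$, and $\varphi^\omega_G$ is the dual weight of $\tau$. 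The modular data then reads $\sigma^{\varphi^\omega_G}_t|_{I^{\check{\alpha}^\omega}(M)} = \mathrm{id}$ and $\sigma^{\varphi^\omega_G}_t(\lambda^{u^\omega}_D(\delta)) = \delta^{it}\lambda^{u^\omega}_D(\delta)$, and correspondingly $\check{\alpha}^\omega_\delta$ scales the trace by the module $\delta\mapsto\delta^{-1}$, i.e. $\tau\circ\check{\alpha}^\omega_\delta = \delta^{-1}\tau$.

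For (ii)$\Rightarrow$(i) I would simply set $w_t := I^{\check{\alpha}^\omega}(v(t))$. Using the commutation relation $I^{\check{\alpha}^\omega}(x)\lambda^{u^\omega}_D(\delta) = \lambda^{u^\omega}_D(\delta)\,I^{\check{\alpha}^\omega}((\check{\alpha}^\omega_\delta)^{-1}(x))$ together with the centrality of $v(t)$ and the hypothesis $\check{\alpha}^\omega_\delta(v(t)) = \delta^{-it}v(t)$ (which on the center gives $(\check{\alpha}^\omega_\delta)^{-1}(v(t)) = \delta^{it}v(t)$), one computes $\mathrm{Ad}(w_t)(\lambda^{u^\omega}_D(\delta)) = \delta^{it}\lambda^{u^\omega}_D(\delta)$, while $\mathrm{Ad}(w_t)$ fixes $I^{\check{\alpha}^\omega}(M)$ because $v(t)$ is central. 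Hence $\mathrm{Ad}(w_t)$ and $\sigma^{\varphi^\omega_G}_t$ agree on the generators, and since $t\mapsto w_t$ is a strongly continuous one-parameter unitary group in $N$, the modular automorphism group is inner and $N$ is semifinite.

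The substantive direction is (i)$\Rightarrow$(ii). Here I would take a faithful normal semifinite trace $T$ on $N$ and write $\varphi^\omega_G = T(h\,\cdot\,)$ for a nonsingular positive self-adjoint $h$ affiliated with $N$, so that $w_t := h^{it}$ implements $\sigma^{\varphi^\omega_G}$ and $M = N^{\varphi^\omega_G} = \{h\}'\cap N$. The crucial step is that $T|_M$ is again semifinite: the spectral projections $e_n := 1_{[1/n,n]}(h)$ lie in $\{h\}''\subseteq M$ and increase to $1$, and on each corner $e_nMe_n$ one has $T\le n\,\varphi^\omega_G$ (since $x\in(e_nMe_n)_+$ commutes with $h$ and $he_n\ge \tfrac1n e_n$), so $\dom(T)\cap M$ is $\sigma$-weakly dense in $M$. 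Thus $T|_M$ and $\tau$ are two faithful normal semifinite traces on $M$, and there is a nonsingular positive $k$ affiliated with $Z(M)$ with $T|_M = \tau(k\,\cdot\,)$. Finally, traciality of $T$ gives $T|_M\circ\check{\alpha}^\omega_\delta = T|_M$, and comparing with $\tau\circ\check{\alpha}^\omega_\delta = \delta^{-1}\tau$ forces $\check{\alpha}^\omega_\delta(k) = \delta^{-1}k$; setting $v(t) := k^{it}$ produces the required one-parameter unitary group in $Z(M)$ with $\check{\alpha}^\omega_\delta(v(t)) = \delta^{-it}v(t)$.

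The main obstacle is the semifiniteness of $T|_M$, since restrictions of semifinite traces to subalgebras need not be semifinite in general; this is exactly where almost unimodularity enters, both to guarantee that $\varphi^\omega_G$ is strictly semifinite (so that $M$ really is its centralizer and $\tau = \varphi^\omega_G|_M$ is a semifinite trace) and through the observation that $M = \{h\}'\cap N$ automatically contains the spectral resolution of $h$, which is what makes the corner estimate work. Secondary technical points I would verify carefully are the precise module computation $\tau\circ\check{\alpha}^\omega_\delta = \delta^{-1}\tau$, read off from the modular formula in Theorem~\ref{thm:cocycle_action_decomp}, and the bookkeeping of the $2$-cocycle $u^\omega$ and of a possibly non-discrete $D$ (so that $\lambda^{u^\omega}_D$ must be handled as an integrated representation); neither affects the central computations above.
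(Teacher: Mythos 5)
Your (ii)$\Rightarrow$(i) direction is essentially the paper's argument verbatim: set $w_t = I^{\check{\alpha}^\omega}(v(t))$, check that $\mathrm{Ad}(w_t)$ agrees with the modular flow on $I^{\check{\alpha}^\omega}(M)$ and on the $\lambda^{u^\omega}_D(\delta)$, and invoke the criterion that semifiniteness is equivalent to innerness of the modular automorphism group. Your (i)$\Rightarrow$(ii) direction is genuinely different. The paper takes \emph{any} strongly continuous unitary group $(u(t))$ implementing $\sigma^{\varphi^\omega_G}$ (existence being exactly the same innerness criterion), observes that each $u(t)$ lies in $Z(L_\omega(G)^{\varphi^\omega_G}) \cong Z(L_\omega(\ker\Delta_G))$ because it is fixed by the flow and commutes with the centralizer, and then extracts the eigenvalue equation $\check{\alpha}^\omega_\delta(v(t)) = \delta^{-it}v(t)$ by a purely algebraic computation with the covariance relation $\lambda^{u^\omega}_D(\delta)I^{\check{\alpha}^\omega}(x)\lambda^{u^\omega}_D(\delta)^* = I^{\check{\alpha}^\omega}(\check{\alpha}^\omega_\delta(x))$; no unbounded operators appear. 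You instead invoke Pedersen--Takesaki twice: once to write $\varphi^\omega_G = T(h\,\cdot\,)$ against a trace $T$, and once more (after your corner argument with $e_n = 1_{[1/n,n]}(h)$ showing that $T|_M$ is semifinite, which is correct and is the real content of your route) to compare the traces $T|_M$ and $\tau$ on $M$ and produce a central Radon--Nikodym derivative $k$. Both routes work, but yours can be shortened: since $M = N^{\varphi^\omega_G} = \{h\}'\cap N$, the unitaries $h^{it}$ already lie in $Z(M)$, and the paper's covariance computation applied to $u(t) = h^{it}$ yields $\check{\alpha}^\omega_\delta(h^{it}) = \delta^{-it}h^{it}$ directly, making the detour through $T|_M$ and $k$ unnecessary. (Also, your worry about a non-discrete $D$ is moot: for an almost unimodular group $\Delta_G(G)\cong G/\ker\Delta_G$ is a countable discrete group, and the paper uses counting measure on it.)

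One formula you state is wrong, although you flagged it for verification and it does not break the argument: the module is $\tau\circ\check{\alpha}^\omega_\delta = \delta\,\tau$, not $\delta^{-1}\tau$. Indeed, conjugation by $g\in G$ with $\Delta_G(g)=\delta$ scales $\mu_{\ker\Delta_G}$ by $\delta^{-1}$, which scales $L^2$-norms of twisted convolvers, hence the Plancherel trace, by $\delta$; equivalently, combining $T\circ\check{\alpha}^\omega_\delta = T$ with $\check{\alpha}^\omega_\delta(h) = \delta^{-1}h$ (forced by $\sigma^{\varphi^\omega_G}_t(\lambda^{u^\omega}_D(\delta)) = \delta^{it}\lambda^{u^\omega}_D(\delta)$) gives
\begin{equation*}
\tau\bigl(\check{\alpha}^\omega_\delta(x)\bigr) = T\bigl(h\,\check{\alpha}^\omega_\delta(x)\bigr) = T\bigl((\check{\alpha}^\omega_\delta)^{-1}(h)\,x\bigr) = \delta\,T(hx) = \delta\,\tau(x).
\end{equation*}
Consequently your trace comparison yields $\check{\alpha}^\omega_\delta(k) = \delta k$ rather than $\delta^{-1}k$, so you must set $v(t) := k^{-it}$ instead of $k^{it}$. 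Since this merely reparametrizes the one-parameter group, condition (ii) still follows, but the sign as written is incorrect.
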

\begin{proof}
    The proof is essentially same as that of \cite[Proposition 5.10]{Miy25}, but we include it for completeness. We denote $G_1:=\ker\Delta_G$ and $D :=\Delta_G(G)$.
    Let $\varphi_G^\omega$ be the twisted Plancherel weight of $L_\omega(G)$. Note that $L_\omega(G)^{\varphi_G^\omega} = \{\lambda_G^\omega(s) : s \in G_1\}'' \cong L_\omega(G_1)$. Here, we fix a section $\sigma : D \to G$ for the above decomposition arbitrarily. 

    $\text{(i)}\implies \text{(ii)}.$ By the assumption, there exists a one parameter unitary group $(u(t))_{t \in \R} \subseteq L_\omega(G)$ which satisfies $\text{Ad}(u(t)) = \sigma_t^{\varphi_G^\omega} $ for all $t \in \R$. Then, we have 
    $$\sigma_t^{\varphi_G^\omega}(u(s)) = u(t)u(s)u(t)^* = u(t + s - t) = u(s) \qquad s,t \in \R.$$
    Moreover, for $x \in L_\omega(G)^{\varphi_G^\omega}$, we have $u(t)xu(t)^* = \text{Ad}(u(t))(x) = \sigma_t^{\varphi_G^\omega}(x) = x$. Therefore, it follows that $(u(t))_{t \in \R} \subseteq Z(L_\omega(G)^{\varphi_G^\omega})$ and hence there exists a one parameter unitary group $(v(t))_{t \in \R} \subseteq Z(L_\omega(G_1))$ corresponds to $(u(t))_{t \in \R}$. Put $w(t) = I^{\check{\alpha}}(v(t))$ for all $t \in \R$. Let $\Phi : L_\omega(G_1) \rtimes_{(\check{\alpha}^\omega,u^\omega)} D \to L_\omega(G)$ be the isomorphism in Theorem~\ref{thm:cocycle_action_decomp} and $\psi = \varphi_G^\omega \circ \Phi$. It follows that $\sigma_t^{\psi} = \Phi^{-1} \circ \sigma_t^{\varphi_G^\omega} \circ \Phi$ and can be shown that $\sigma_t^{\psi} = \text{Ad}(w(t))$. 
    We calculate, for $t \in \R, $ and $\delta \in D$, 
    $$\sigma_t^{\psi}(\lambda^u_D(\delta)) = \Phi^{-1}(\sigma_t^{\varphi_G^\omega}(\lambda_G^\omega(\sigma(\delta^{-1})^{-1}))) = \Delta_G(\sigma(\delta^{-1})^{-1})^{it}\lambda^u_D(\delta) = \delta^{it}\lambda^u_D(\delta).$$
    Therefore, we see that $w(t)\lambda_D^u(\delta)w(t)^* = \delta^{it}\lambda_D^u(\delta)$, and hence, by the covariant relation, we have $$I^{\check{\alpha}}(\check{\alpha}_\delta(v(t)))w(t)^* = \lambda_D^u(\delta)I^{\check{\alpha}}(v(t))\lambda_D^u(\delta)^*w(t)^* = \lambda_D^u(\delta)w(t)\lambda_D^u(\delta)^*w(t)^* = \delta^{-it}.$$
    We conclude that $I^{\check{\alpha}}(\check{\alpha}_\delta(v(t))) = \delta^{-it}w(t) = \delta^{-it}I^{\check{\alpha}}(v(t))$, thus $\check{\alpha}_\delta(v(t)) = \delta^{-it}v(t)$ holds for all $t \in \R,$ and $ \delta \in D$.
    
    $\text{(ii)}\implies \text{(i)}.$ Put $w(t) = I^{\check{\alpha}}(v(t))$. Since $(v(t))_{t \in \R} \subseteq Z(L_\omega(G_1))$, it can be shown that 
    \begin{align*}
        \begin{cases}
            \text{Ad}(w(t))(I^{\check{\alpha}}(x)) = I^{\check{\alpha}}(x) = \sigma_t^{\psi}(x), \\
            \text{Ad}(w(t))(\lambda_D^u(\delta)) = \delta^{it}\lambda_D^u(\delta) = \sigma_t^\psi(\lambda_D^u(\delta)),
        \end{cases}
    \end{align*}
    where $t \in \R, x \in L_\omega(G_1), \delta \in D$. Therefore, we conclude that $\sigma_t^{\psi} = \text{Ad}(w(t))$ and hence $L_\omega(G) \cong L_\omega(G_1) \rtimes_{(\check{\alpha}^\omega,u^\omega)} D$ is semifinite.
\end{proof}

Moreover, we also characterize the T-invariant of $L_\omega(G)$ and provide the sufficient condition that the twisted group von Neumann algebra becomes a type $\rm{III}_0$-factor. Since the proof is essentially identical to Proposition \ref{prop:condition_of_semifiniteness} and \cite[Proposition 5.12]{Miy25}, we leave it to the reader. 

\begin{prop}\label{prop:the_T_invariant_of_twisted_version}
     Let $G$ be a second countable almost unimodular group with a 2-cocycle $\omega:G \times G \to \mathbb{T}$. We identify $L_\omega(G) \cong L_\omega(\ker\Delta_G) \rtimes_{(\check{\alpha}^\omega,u^\omega)} \Delta_G(G)$ with a cocycle action $(\check{\alpha}^\omega,u^\omega): \Delta_G(G) \curvearrowright L_\omega(\ker\Delta_G)$ obtained by Theorem \ref{thm:cocycle_action_decomp}. Then, the following conditions are equivalent.
    \begin{enumerate}
        \item [(i)] $t \in \text{T}(L_\omega(G))$.
        \item [(ii)] There exists a unitary $v \in Z(L_\omega(\ker\Delta_G))$ which satisfies $\check{\alpha}_\delta(v)=\delta^{-it}v$ for all $\delta\in \Delta_G(G).$
    \end{enumerate}
\end{prop}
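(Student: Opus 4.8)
The plan is to run the argument of Proposition~\ref{prop:condition_of_semifiniteness} with the one-parameter unitary group replaced by a single unitary, using the fact that $t\in\text{T}(L_\omega(G))$ precisely when the modular automorphism $\sigma_t^{\varphi^\omega_G}$ is inner. Write $N:=L_\omega(\ker\Delta_G)$ and $D:=\Delta_G(G)$, let $\Phi:N\rtimes_{(\check\alpha^\omega,u^\omega)}D\to L_\omega(G)$ be the isomorphism of Theorem~\ref{thm:cocycle_action_decomp}, and let $\psi:=\varphi^\omega_G\circ\Phi$ be the associated dual weight. As recorded in the proof of Proposition~\ref{prop:condition_of_semifiniteness}, its centralizer is $(N\rtimes D)^\psi=I^{\check\alpha^\omega}(N)$, while $\sigma_s^\psi$ fixes $I^{\check\alpha^\omega}(N)$ pointwise and $\sigma_s^\psi(\lambda^{u^\omega}_D(\delta))=\delta^{is}\lambda^{u^\omega}_D(\delta)$. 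Since $\Phi$ is a normal $*$-isomorphism, $t\in\text{T}(L_\omega(G))$ if and only if $\sigma_t^\psi$ is inner on $N\rtimes D$, so it suffices to prove the equivalence with (i) replaced by innerness of $\sigma_t^\psi$.

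For (ii)$\Rightarrow$(i), I would set $w:=I^{\check\alpha^\omega}(v)$. Since $v\in Z(N)$, conjugation by $w$ fixes $I^{\check\alpha^\omega}(N)$ pointwise, and using the covariance relation $\lambda^{u^\omega}_D(\delta)I^{\check\alpha^\omega}(y)\lambda^{u^\omega}_D(\delta)^*=I^{\check\alpha^\omega}(\check\alpha^\omega_\delta(y))$ together with $\check\alpha^\omega_\delta(v^*)=\delta^{it}v^*$ one computes
    \begin{align*}
        w\,\lambda^{u^\omega}_D(\delta)\,w^* = I^{\check\alpha^\omega}\big(v\,\check\alpha^\omega_\delta(v^*)\big)\lambda^{u^\omega}_D(\delta)=\delta^{it}\lambda^{u^\omega}_D(\delta)=\sigma_t^\psi(\lambda^{u^\omega}_D(\delta)).
    \end{align*}
Thus $\Ad{w}$ agrees with $\sigma_t^\psi$ on the generators $I^{\check\alpha^\omega}(N)$ and $\lambda^{u^\omega}_D(D)$, so $\sigma_t^\psi=\Ad{w}$ is inner and $t\in\text{T}(L_\omega(G))$.

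For (i)$\Rightarrow$(ii), suppose $\sigma_t^\psi=\Ad{w}$ for a unitary $w\in N\rtimes D$. Reversing the above computation, it is enough to produce an implementing unitary of the form $I^{\check\alpha^\omega}(v)$ with $v\in N$: the relation $\Ad{w}|_{I^{\check\alpha^\omega}(N)}=\mathrm{id}$ then forces $v\in Z(N)$, and $I^{\check\alpha^\omega}(v\check\alpha^\omega_\delta(v^*))\lambda^{u^\omega}_D(\delta)=\delta^{it}\lambda^{u^\omega}_D(\delta)$ forces $v\check\alpha^\omega_\delta(v^*)=\delta^{it}$, i.e.\ $\check\alpha^\omega_\delta(v)=\delta^{-it}v$. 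So everything reduces to relocating the implementing unitary into $Z(I^{\check\alpha^\omega}(N))=Z\big((N\rtimes D)^\psi\big)$. Since $\sigma_t^\psi|_{I^{\check\alpha^\omega}(N)}=\mathrm{id}$ we immediately get $w\in I^{\check\alpha^\omega}(N)'\cap(N\rtimes D)$ and $\sigma_t^\psi(w)=\Ad{w}(w)=w$. Invoking the almost periodicity of $\psi$ (equivalently the eigenspace decomposition $N\rtimes D=\overline{\bigoplus_{\delta\in D}I^{\check\alpha^\omega}(N)\lambda^{u^\omega}_D(\delta)}$ of $\sigma^\psi$) together with the $\psi$-preserving conditional expectation onto $I^{\check\alpha^\omega}(N)$, one replaces $w$ by an implementing unitary lying in $Z(I^{\check\alpha^\omega}(N))$; this relocation is exactly the step carried out in \cite[Proposition~5.12]{Miy25}, and granting it the cocycle relation follows as above.

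The main obstacle is precisely this relocation. In Proposition~\ref{prop:condition_of_semifiniteness} one deduced $u(s)\in(N\rtimes D)^\psi$ from the commutation $u(t)u(s)u(t)^*=u(s)$ inside the one-parameter unitary group, and then centrality from $\Ad{u(t)}|_{(N\rtimes D)^\psi}=\mathrm{id}$; with a single unitary $w$ one only obtains $\sigma_t^\psi(w)=w$, so $w$ lies in the fixed points of $\sigma_t^\psi$ rather than of the whole modular flow. The work therefore lies in controlling the central-valued $\sigma^\psi$-cocycle $s\mapsto\sigma_s^\psi(w)w^*\in Z(N\rtimes D)$ — which is indeed central because $\Ad{\sigma_s^\psi(w)}=\sigma_s^\psi\,\sigma_t^\psi\,(\sigma_s^\psi)^{-1}=\sigma_t^\psi=\Ad{w}$ — and in using almost periodicity to push $w$ onto the trivial spectral component, landing it in $Z(I^{\check\alpha^\omega}(N))$. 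Once this single-automorphism analogue of the centralizer step is in place, the rest of the equivalence is a routine reversal of the explicit computation in the easy direction.
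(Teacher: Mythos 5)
Your overall structure matches the paper's intent exactly: the paper offers no written argument, saying only that the proof is ``essentially identical'' to Proposition~\ref{prop:condition_of_semifiniteness} and \cite[Proposition 5.12]{Miy25}, and your (ii)$\Rightarrow$(i) direction is a complete and correct single-unitary version of the computation in Proposition~\ref{prop:condition_of_semifiniteness}. Your reduction of (i)$\Rightarrow$(ii) to ``relocating'' the implementing unitary into $Z(I^{\check{\alpha}^\omega}(N))$, where $N:=L_\omega(\ker\Delta_G)$ and $I:=I^{\check{\alpha}^\omega}$, is also the right reduction, and \cite[Proposition 5.12]{Miy25} is the right place to point.

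However, the mechanism you sketch for that relocation --- the central cocycle $z_s=\sigma_s^\psi(w)w^*$ plus ``using almost periodicity to push $w$ onto the trivial spectral component'' --- is not sufficient as stated, and this is a genuine missing idea rather than a routine verification. What those two ingredients actually give is the following: writing $z_s=h^{is}$ with $h$ positive and affiliated to $Z(N\rtimes D)$, the spectral projections $p_\delta=1_{\{\delta\}}(h)$ form a central partition of unity such that $wp_\delta$ is a unitary of $(N\rtimes D)p_\delta$ lying in the $\delta$-eigenspace $I(N)\lambda^{u^\omega}_D(\delta)p_\delta$. Nothing so far forces $p_\delta=0$ for $\delta\ne1$; a priori $w$ may sit entirely in a non-trivial eigenspace. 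This can genuinely happen at the purely algebraic level: if $\theta$ is an automorphism of the hyperfinite $\II_1$ factor $R$ with outer period $2$ and Connes obstruction $-1$ (so $\theta^2=\Ad{z}$ with $\theta(z)=-z$), then in $R\rtimes_\theta\Z$ the dual-action automorphism fixing $I(R)$ pointwise and sending $\lambda(n)\mapsto(-1)^n\lambda(n)$ is implemented by the unitary $I(z^*)\lambda(2)$, which lies in a non-trivial eigenspace, and no unitary in $I(Z(R))=\C$ implements it. The ingredient that rules this out in the present setting --- and which is the actual content of the step you are granting --- is weight scaling: a unitary $u_0$ in the $\delta$-eigenspace of $\sigma^\psi$ satisfies $\psi\circ\Ad{u_0}=\delta\,\psi$ by the KMS condition, while $\Ad{wp_\delta}$ is trivial on $I(N)p_\delta$, where $\psi$ restricts to a nonzero faithful normal semifinite trace; hence $\delta\,\psi=\psi$ there, forcing $\delta=1$ whenever $p_\delta\ne0$. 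Equivalently, $\check{\alpha}^\omega_\delta$ scales the Plancherel trace of $L_\omega(\ker\Delta_G)$ by $\delta$, so it cannot coincide with an inner automorphism on any central summand unless $\delta=1$; this is exactly why trace scaling, and not merely almost periodicity, underlies \cite[Proposition 5.12]{Miy25}. With this inserted, $w=wp_1\in I(N)\cap I(N)'=I(Z(N))$ and your extraction of $v$ goes through; without it, the relocation does not follow.
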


\begin{prop}
    Let $G$ be a second countable almost unimodular group with 2-cocycle $\omega: G \times G \to \mathbb{T}$. Suppose that $\Delta_G(G)$ is relatively discrete and $L_\omega(\ker\Delta_G)$ is not a factor. We identify $L_\omega(G) \cong L_\omega(\ker\Delta_G) \rtimes_{(\check{\alpha}^\omega,u^\omega)} \Delta_G(G)$ through Theorem \ref{thm:cocycle_action_decomp} associated with an inverse preserving section $\sigma : \Delta_G(G) \to G$. If $\check{\alpha}_\delta|_{Z(L_\omega(\ker\Delta_G))}$ is ergodic for each $\delta \in \Delta_G(G) \backslash \{1\}$, then $L_\omega(G)$ is a $\rm{III}_0$-factor. 
\end{prop}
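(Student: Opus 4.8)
The plan is to establish three facts: that $L_\omega(G)$ is a factor, that $\S(L_\omega(G))\setminus\{0\}=\{1\}$, and that $L_\omega(G)$ is not semifinite. Since a factor is semifinite precisely when its Connes invariant equals $\{1\}$, and is of type $\mathrm{III}_0$ precisely when its Connes invariant equals $\{0,1\}$, these three facts together force $\S(L_\omega(G))=\{0,1\}$ and hence the conclusion. Throughout I would write $Z:=Z(L_\omega(\ker\Delta_G))$, which is nontrivial by the hypothesis that $L_\omega(\ker\Delta_G)$ is not a factor, and fix once and for all some $\delta_0\in\Delta_G(G)\setminus\{1\}$; such a $\delta_0$ exists because $G$ is non-unimodular (were $\Delta_G(G)=\{1\}$ the hypotheses would be vacuous), and $\Delta_G(G)$ is infinite cyclic, being a nontrivial relatively discrete subgroup of $\R_+$. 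Because $\R_+$ is torsion-free, $\delta_0^2\neq 1$.

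Factoriality would be immediate: since $\check\alpha^\omega_{\delta_0}|_Z$ is ergodic, any element of $Z$ fixed by the entire action of $\Delta_G(G)$ is in particular $\check\alpha^\omega_{\delta_0}$-invariant, hence scalar. Thus $(\check\alpha^\omega,u^\omega)$ is centrally ergodic and $L_\omega(G)$ is a factor by Corollary~\ref{cor:factoriality_of_the_twisted_version}. For the modular spectrum, I would observe that for each $\delta\in\Delta_G(G)\setminus\{1\}$ the ergodicity of $\check\alpha^\omega_\delta|_Z$ on the nontrivial abelian algebra $Z$ forces $\check\alpha^\omega_\delta|_Z\neq\mathrm{id}$ (the identity would be ergodic only if $Z=\C1$). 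The modular spectrum formula in the discrete case established above (Proposition~3.6) then gives $\S(L_\omega(G))\setminus\{0\}=\{\delta\in\Delta_G(G):\check\alpha^\omega_\delta|_Z=\mathrm{id}\}\setminus\{0\}=\{1\}$.

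The heart of the argument is non-semifiniteness, which I would prove by contradiction using Proposition~\ref{prop:condition_of_semifiniteness}. If $L_\omega(G)$ were semifinite, there would be a strongly continuous one-parameter unitary group $(v(t))_{t\in\R}\subseteq Z$ with $\check\alpha^\omega_\delta(v(t))=\delta^{-it}v(t)$ for all $\delta\in\Delta_G(G)$ and $t\in\R$. Setting $u:=v(\pi/\log\delta_0)\in Z$, one computes $\check\alpha^\omega_{\delta_0}(u)=\delta_0^{-i\pi/\log\delta_0}u=e^{-i\pi}u=-u$. Because inner automorphisms act trivially on the center, restricting the cocycle identity $\check\alpha^\omega_{\delta_0}\check\alpha^\omega_{\delta_0}=\Ad{u^\omega(\delta_0,\delta_0)}\check\alpha^\omega_{\delta_0^2}$ to $Z$ yields $(\check\alpha^\omega_{\delta_0}|_Z)^2=\check\alpha^\omega_{\delta_0^2}|_Z$, whence $\check\alpha^\omega_{\delta_0^2}(u)=\check\alpha^\omega_{\delta_0}(-u)=u$. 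Thus $u$ is a fixed point of $\check\alpha^\omega_{\delta_0^2}|_Z$, and it is not scalar (a scalar would satisfy $\check\alpha^\omega_{\delta_0}(u)=u$, contradicting $\check\alpha^\omega_{\delta_0}(u)=-u$). This contradicts the ergodicity of $\check\alpha^\omega_{\delta_0^2}|_Z$, available since $\delta_0^2\in\Delta_G(G)\setminus\{1\}$. Hence $L_\omega(G)$ is not semifinite, and combining the three facts yields $\S(L_\omega(G))=\{0,1\}$, so $L_\omega(G)$ is a $\mathrm{III}_0$-factor.

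I expect the non-semifiniteness step to be the main obstacle, and it is the only place where the full strength of the hypothesis is needed: the unitary $u$ produced from a hypothetical semifinite structure is an eigenvector of $\check\alpha^\omega_{\delta_0}$ with eigenvalue $-1$ rather than a fixed vector, so ergodicity of $\check\alpha^\omega_{\delta_0}$ alone is useless. One must pass to the square $\delta_0^2$ and invoke the ergodicity of $\check\alpha^\omega_{\delta_0^2}|_Z$ as well, that is, the hypothesis that $\check\alpha^\omega_\delta|_Z$ is ergodic for \emph{every} $\delta\neq 1$ and not merely for a generator, while verifying that the $2$-cocycle $u^\omega$ drops out upon restriction to the center.
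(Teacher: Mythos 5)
Your proposal is correct and takes exactly the route the paper intends (the paper omits this proof, deferring to \cite[Proposition 5.12]{Miy25} and its own machinery): factoriality from Corollary \ref{cor:factoriality_of_the_twisted_version}, the identity $\S(L_\omega(G))\setminus\{0\}=\{1\}$ from the discrete-case modular spectrum formula, and non-semifiniteness by contradiction from Proposition \ref{prop:condition_of_semifiniteness}, where your unitary $u=v(\pi/\log\delta_0)$ satisfies $\check{\alpha}^\omega_{\delta_0}(u)=-u$ and $\check{\alpha}^\omega_{\delta_0^2}(u)=u$, contradicting ergodicity at $\delta_0^2\neq 1$. Your closing observation is also accurate: the hypothesis must hold for every $\delta\neq 1$ and not merely for a generator, since ergodicity of a single automorphism need not pass to its powers, and this is precisely where both the spectrum computation and the non-semifiniteness step use the full strength of the assumption.
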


There are examples which can be applied to the above propositions. The following lemma is required for the examples.

\begin{lem}\label{lem:tensor_decomposition_of_product_group}
    Let $G_1, G_2$ be second countable almost unimodular groups with 2-cocycles $\omega_1 : G_1 \times G_1 \to \mathbb{T}$ and $\omega_2 : G_2 \times G_2 \to \mathbb{T}$. It follows that $L_\omega(G_1 \times G_2) \cong L_{\omega_1}(G_1) \bar\otimes L_{\omega_2}(G_2)$ where a 2-cocycle $\omega$ on $G_1 \times G_2$ is defined by 
    $$\omega((s_1, s_2), (t_1, t_2)) = \omega_1(s_1, t_1)\omega_2(s_2, t_2) \qquad s_1, t_1 \in G_1, s_2, t_2 \in G_2.$$
\end{lem}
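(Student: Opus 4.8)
The plan is to establish the isomorphism $L_\omega(G_1\times G_2)\cong L_{\omega_1}(G_1)\bar\otimes L_{\omega_2}(G_2)$ by matching generators, exactly as one does in the untwisted case where $L(G_1\times G_2)\cong L(G_1)\bar\otimes L(G_2)$. First I would fix left Haar measures $\mu_{G_1},\mu_{G_2}$ and take $\mu_{G_1}\times\mu_{G_2}$ as a left Haar measure on the product, so that the Hilbert space factorizes as $L^2(G_1\times G_2)\cong L^2(G_1)\otimes L^2(G_2)$ via $f_1\otimes f_2\mapsto \big((s_1,s_2)\mapsto f_1(s_1)f_2(s_2)\big)$. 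The key computation is that under this identification the left regular $\omega$-projective representation splits as a tensor product of the two constituent representations. Concretely, I would verify directly from the definition $[\lambda^\omega_{G_1\times G_2}(s_1,s_2)f](t_1,t_2)=\omega((t_1,t_2)^{-1},(s_1,s_2))f(s_1^{-1}t_1,s_2^{-1}t_2)$ that the stated product form of $\omega$ makes the twisting factor separate as $\omega_1(t_1^{-1},s_1)\,\omega_2(t_2^{-1},s_2)$, yielding
\[
    \lambda^\omega_{G_1\times G_2}(s_1,s_2)=\lambda^{\omega_1}_{G_1}(s_1)\otimes\lambda^{\omega_2}_{G_2}(s_2)
\]
under the tensor identification. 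This is essentially a bookkeeping check that the product cocycle is compatible with the product group law, and one should first confirm $\omega$ is indeed a $2$-cocycle on $G_1\times G_2$ by substituting the product form into the cocycle identity (the two factors satisfy their own identities independently).

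Once the generators match, the von Neumann algebra statement follows from standard commutant theory. Since $L_{\omega_1}(G_1)=\lambda^{\omega_1}_{G_1}(G_1)''$ and $L_{\omega_2}(G_2)=\lambda^{\omega_2}_{G_2}(G_2)''$, the algebra $L_\omega(G_1\times G_2)$ is generated by the elementary tensors $\lambda^{\omega_1}_{G_1}(s_1)\otimes\lambda^{\omega_2}_{G_2}(s_2)$, so it is contained in $L_{\omega_1}(G_1)\bar\otimes L_{\omega_2}(G_2)$. For the reverse inclusion I would invoke the commutation theorem for tensor products, using the description $R_{\overline{\omega}}(G)=L_\omega(G)'$ from \cite[Theorem 3.8]{Sut80}: one checks that $L_\omega(G_1\times G_2)'$ is generated by the right regular $\overline{\omega}$-representation, which likewise factorizes as $\rho^{\overline{\omega_1}}_{G_1}\otimes\rho^{\overline{\omega_2}}_{G_2}$ (the modular function of the product is $\Delta_{G_1\times G_2}(s_1,s_2)=\Delta_{G_1}(s_1)\Delta_{G_2}(s_2)$, which separates the $\Delta^{1/2}$ factor in the definition of $\rho^\omega$). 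Taking commutants once more gives $L_\omega(G_1\times G_2)''=L_{\omega_1}(G_1)\bar\otimes L_{\omega_2}(G_2)$, so the two algebras coincide.

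The main obstacle, such as it is, is purely one of normalization and cocycle bookkeeping: one must ensure the cocycles are taken fully normalized (which is harmless by \cite[Proposition 2.4]{Sut80}, since cohomologous cocycles give the same algebra) and that the product of two fully normalized cocycles is again fully normalized so that the change-of-variables conventions for $\omega$ and $\overline{\omega}$ apply cleanly on the product group. I expect no genuine difficulty, as the factorization of the regular representation is the heart of the matter and the rest is the standard tensor-product commutation argument; the almost-unimodularity hypothesis on $G_1,G_2$ is not actually needed for this lemma and I would note that it is included only because the lemma is used in the subsequent examples.
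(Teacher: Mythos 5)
Your proposal is correct and takes essentially the same route as the paper: the same canonical unitary $U\colon L^2(G_1)\otimes L^2(G_2)\to L^2(G_1\times G_2)$, the same verification that $\omega$ is a 2-cocycle, and the same computation showing $\lambda^\omega_{G_1\times G_2}(s_1,s_2)=U\left(\lambda^{\omega_1}_{G_1}(s_1)\otimes\lambda^{\omega_2}_{G_2}(s_2)\right)U^*$, with the isomorphism then following by generation. Your additional commutant step for the reverse inclusion (via $R_{\overline{\omega}}(G)=L_\omega(G)'$ and the tensor commutation theorem) just makes explicit what the paper asserts directly from the generators, and your observation that almost unimodularity is not actually needed is also accurate.
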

\begin{proof}
    By calculation, $\omega$ is a 2-cocycle on $G_1 \times G_2$. We will show that $L_\omega(G_1 \times G_2) \cong L_{\omega_1}(G_1) \bar\otimes L_{\omega_2}(G_2)$. Now, let $U$ be the canonical unitary from $L^2(G_1) \otimes L^2(G_2)$ to $L^2(G_1 \times G_2)$. That is, $U : L^2(G_1) \otimes L^2(G_2) \to L^2(G_1 \times G_2)$ is a unitary which satisfies $$U(f_1 \otimes f_2)(s_1, s_2) = f_1(s_1)f_2(s_2) \qquad f_1 \in L^2(G_1), f_2 \in L^2(G_2).$$
    Then, we have 
    \begin{align*}
        [( \lambda_{G_1 \times G_2}^\omega(s_1, s_2) U(f_1 \otimes f_2))](t_1, t_2) &= \omega((t_1, t_2)^{-1}, (s_1, s_2))[U(f_1 \otimes f_2)](s_1^{-1}t_1, s_2^{-1}t_2) \\
        &= \omega_1(t_1^{-1}, s_1)\omega_2(t_2^{-1}, s_2)f_1(s_1^{-1}t_1)f_2(s_2^{-1}t_2) \\
        &= (\lambda_{G_1}^{\omega_1}(s_1)f_1)(t_1)(\lambda_{G_2}^{\omega_2}(s_2)f_2)(t_2) \\
        &= [U(\lambda_{G_1}^{\omega_1}(s_1)f_1 \otimes \lambda_{G_2}^{\omega_2}(s_2)f_2)](t_1, t_2) \\
        &= [U(\lambda_{G_1}^{\omega_1}(s_1) \otimes \lambda_{G_2}^{\omega_2}(s_2))(f_1 \otimes f_2)](t_1, t_2),
    \end{align*}
    where $f_1 \in L^2(G_1), f_2 \in L^2(G_2), s_1, t_1 \in G_1, s_2, t_2 \in G_2$. Linearity and density implies that $\lambda_{G_1 \times G_2}^\omega(s_1, s_2) U = U (\lambda_{G_1}^{\omega_1}(s_1) \otimes \lambda_{G_2}^{\omega_2}(s_2))$ for all $s_1 \in G_1, s_2 \in G_2$. Therefore, we see that $U (\lambda_{G_1}^{\omega_1}(s_1) \otimes \lambda_{G_2}^{\omega_2}(s_2)) U^* = \lambda_{G_1 \times G_2}^\omega(s_1, s_2)$. Since $\lambda_{G_1}^{\omega_1}(s_1),  \lambda_{G_2}^{\omega_2}(s_2)$ and $ \lambda_{G_1 \times G_2}^\omega(s_1, s_2)$ respectively generates $L_{\omega_1}(G_1), L_{\omega_2}(G_2)$ and $L_{\omega}(G_1 \times G_2)$, we conclude that $\text{Ad}(U) : L_{\omega_1}(G_1) \bar\otimes L_{\omega_2}(G_2) \to 
    L_\omega(G_1 \times G_2)$ gives a desired isomorphism.
\end{proof}

\begin{ex}\label{ex:construction_of_III0_factors}
    Let $H$ be a second countable almost unimodular group with a 2-cocycle $\omega : H \times H \to \mathbb{T}$ and set $H_1 := \ker\Delta_H$, where $\Delta_H$ is the modular function of $H$. Suppose that $L_\omega(H_1)$ is a factor and $H$ is not unimodular. Denote $D := \Delta_H(H)$ regarded as a countable discrete abelian group. Set $K := \{0, 1\}^D$ and let $s : D \curvearrowright K$ be the Bernoulli shift, that is, $(s_\gamma (k))_\delta = k_{\gamma^{-1}\delta}$ for all $\gamma, \delta \in D, k \in K$. Also, there is an action $\hat{s} : D \curvearrowright \widehat{K}$ defined by $$\big(k \mid \hat{s}_\gamma(p) \big) = \big(s_\gamma(k) \mid p \big) \qquad k \in K, p \in \widehat{K}, \gamma \in D.$$ 

    Now, we identify $H \cong H_1 \rtimes_{(\alpha^D, c^D)} D$ with an inverse preserving section $\sigma : D \to H$ (See \cite[Proposition 4.1, Example 5.15, Example 5.17]{Miy25} for more details). Then, $G := (H_1 \times \widehat{K}) \rtimes_{(\beta^D, v^D)} D$ is an almost unimodular group where $(\beta^D, v^D) : D \curvearrowright H_1 \times \widehat{K}$ is defined by 
    \begin{align*}
        \begin{cases}
            \beta_\gamma^D(h, p) =(\alpha_\gamma^{D}(h), \hat{s}_\gamma(p)), \quad \gamma \in D, h\in H_1, p \in \widehat{K},\\
            v^D(\gamma, \delta) = (c^{D}(\gamma, \delta), e_{\widehat{K}}), \quad \gamma, \delta \in D.
        \end{cases}
    \end{align*}
    We construct a new 2-cocycle $\tilde{\omega}$ on $G$. For $(h, p, \gamma), (h', p', \gamma') \in G$, define
    $$\tilde{\omega}((h, p, \gamma), (h', p', \gamma')) := \omega(h\sigma(\gamma), h'\sigma(\gamma')).$$ 
    The calculation shows that $\tilde{\omega} : G \times G \to \mathbb{T}$ is a 2-cocycle on $G$. 
    Indeed, for $(h, p, \gamma), (h', p', \gamma'), (h'', p'', \gamma'') \in G$, we have 
    \begin{align*}
        \tilde{\omega}((h, p, \gamma), &(h', p', \gamma')) \tilde{\omega}((h, p, \gamma)(h', p', \gamma'), (h'', p'', \gamma'')) \\
        &= \tilde{\omega}((h, p, \gamma), (h', p', \gamma'))\tilde{\omega}((h\alpha^D_\gamma(h')c^D(\gamma, \gamma'), p\hat{s}_\gamma(p'), \gamma\gamma'), (h'',p'', \gamma'')) 
        \\ &= \omega(h\sigma(\gamma), h'\sigma(\gamma'))\omega(h\alpha^D_\gamma(h')c^D(\gamma, \gamma')\sigma(\gamma\gamma'), h''\sigma(\gamma'')) 
        \\ &= \omega(h\sigma(\gamma), h'\sigma(\gamma'))\omega(h\sigma(\gamma)h'\sigma(\gamma)^{-1}\sigma(\gamma)\sigma(\gamma')\sigma(\gamma\gamma')^{-1}\sigma(\gamma\gamma'), h''\sigma(\gamma'')) 
        \\ &=\omega(h\sigma(\gamma), h'\sigma(\gamma'))\omega(h\sigma(\gamma)h'\sigma(\gamma'), h''\sigma(\gamma''))
        \\ &= \omega(h'\sigma(\gamma'), h''\sigma(\gamma'')) \omega(h\sigma(\gamma), h'\sigma(\gamma')h''\sigma(\gamma''))
        \\ &=\tilde{\omega}((h', p', \gamma'), (h'', p'', \gamma'')) \tilde{\omega}((h, p, \gamma), (h', p' , \gamma')(h'', p'', \gamma'')).
    \end{align*}
    In this construction, we know that $\Delta_G(h, p, \gamma) = \gamma$ and $\Delta_G(G) = D$, and hence $G_1 = (H_1 \times \widehat{K}) \rtimes \{1\} \cong H_1 \times \widehat{K}$ holds. We can construct an inverse preserving section $\sigma^G \colon D \to G$ defined by $\sigma^G(\gamma)=(e_{H_1},e_{\widehat{K}},\gamma)$ for all $\gamma\in D$. With this section, we see that $\sigma^G(\gamma)(h,p,1)\sigma^G(\gamma)^{-1}=(\alpha_\gamma^{D}(h),\hat{s}_\gamma(p),1)$ for each $h\in H_1, p \in \widehat{K}, \gamma\in D$. 
    
    Next, we identify $L_{\tilde{\omega}}(G) \cong L_{\tilde{\omega}}(G_1) \rtimes_{(\check{\alpha}^{\tilde{\omega}}, u^{\tilde{\omega}})} D$ through Theorem \ref{thm:cocycle_action_decomp} associated with the above inverse preserving section $\sigma^G$. With the identification $G_1 = (H_1 \times \widehat{K}) \rtimes \{1\} \cong H_1 \times \widehat{K}$, $\tilde{\omega}|_{G_1 \times G_1}$ corresponds to the 2-cocycle $\omega'$ on $H_1 \times \widehat{K}$ defined by $\omega'((h, p), (h', p')) = \omega(h, h')$. Thus, Lemma~\ref{lem:tensor_decomposition_of_product_group} implies that $L_{\tilde{\omega}}(G_1) \cong L_\omega(H_1) \bar\otimes L(\widehat{K}) \cong L_\omega(H_1) \bar\otimes L^\infty(K)$ and hence $Z(L_{\tilde{\omega}}(G_1)) \cong Z(L_\omega(G_1)) \bar\otimes Z(L^\infty(K)) \cong L^\infty(K)$ by the assumption that $L_\omega(H_1)$ is a factor. Since $$\check{\alpha}_\gamma (\lambda_{G_1(\tilde{\omega})}(1,(h, p, 1))) = \lambda_{G_1(\tilde{\omega})}((1, \sigma^G(\gamma))(1,(h, p, 1))(1, \sigma^G(\gamma)^{-1})) =\lambda_{G_1(\tilde{\omega})}(1,(\alpha_\gamma^{D}(h),\hat{s}_\gamma(p),1))$$
    for each $h\in H_1, p \in \widehat{K}, \gamma\in D$, it can be shown that $(\check{\alpha}^{\tilde{\omega}}, u^{\tilde{\omega}}) : D \curvearrowright Z(L_{\tilde{\omega}}(G_1))$ corresponds to $\bar s : D \curvearrowright L^\infty(K)$ defined by $(\bar s_\gamma (f))(k) = f(s_\gamma(k))$ for all $\gamma \in D, k \in K, f \in L^\infty(K)$. Therefore, we can apply the arguments in \cite[Example 5.15, 5.17]{Miy25} and then conclude that $L_{\tilde{\omega}}(G)$ is a factor of  type
    \begin{align*}
         \begin{cases}
        \rm{III}_0 \quad \text{when}\; \Delta_H(H) \; \text{is relatively discrete,} \\
        \rm{III}_1 \quad \text{when}\; \Delta_H(H) \; \text{is dense in} \; \R_{\ge 0}. \hfill\blacksquare
    \end{cases}
    \end{align*} 
\end{ex}

\begin{rem}
    The group $H$ which satisfies the assumption of Example \ref{ex:construction_of_III0_factors} exists.
    Let $G$ be a second countable unimodular group with a 2-cocycle $\omega_0 : G \times G \to \mathbb{T}$ and $\tilde{G}$ be a second countable non-unimodular almost unimodular group. Suppose that $L_{\omega_0}(G)$ and $L(\text{ker}\Delta_{\tilde{G}})$ are factors. Then, $H = G \times \tilde{G}$ with the 2-cocycle $\omega : H \times H \to \mathbb{T}$ defined by 
    $$\omega((s_1, t_1), (s_2, t_2)) = \omega_0(s_1, s_2) \qquad s_1, s_2 \in G, t_1, t_2 \in \tilde{G}$$
    do the work. Indeed, we see that $\Delta_H(H) = \Delta_{\tilde{G}}(\tilde{G})$ and $L_\omega(H_1) \cong L_{\omega_0}(G) \bar\otimes L(\text{ker}\Delta_{\tilde{G}})$ is a factor by Lemma \ref{lem:tensor_decomposition_of_product_group}. 
    We also know that such a group $G$ exists, for example, any ICC group equipped with a 2-cocycle is desired one.
\end{rem}

We also consider \cite[Example 2.3]{GG25} from the perspective of the twisted crossed product decomposition. 

\begin{ex}\label{ex:analysis_of_GG25_Example2.3}
    Let $G$ be a second countable almost unimodular group. Denote $G_1 := \ker\Delta_G, D:= \Delta_G(G)$. Suppose that $L(G_1)$ is a factor and $G$ is not unimodular. Put $\mathcal{G} := \widehat{D} \times G$. We define the 2-cocycle on $\mathcal{G}$ as follows:
    $$\omega((\gamma_1, s_1), (\gamma_2, s_2)) = \overline{(\gamma_2\mid\Delta_G(s_1))} \qquad \gamma_1, \gamma_2 \in \widehat{D}, s_1, s_2 \in G.$$
    \cite[Example 2.3]{GG25} shows that $L_\omega(\mathcal{G})$ is a semifinite factor and $L(\mathcal{G})$ is purely infinite non-factor. We also analyze this example in another way. 

    Abusing notation slightly, we redefine $\omega$ as follows:
    $$\omega((\gamma_1, s_1), (\gamma_2, s_2)) = (\gamma_1\mid\Delta_G(s_2))^{1/2}\overline{(\gamma_2\mid\Delta_G(s_1))}^{1/2} \qquad \gamma_1, \gamma_2 \in \widehat{D}, s_1, s_2 \in G.$$
    In fact, this 2-cocycle is fully normalized and cohomologous to original one. Thus, we consider $L_\omega(\mathcal{G})$ instead of the original twisted group von Neumann algebra. Let $\sigma_0 : D \to G$ be an arbitrary inverse preserving section. Then, since $\Delta_{\mathcal{G}}(\mathcal{G}) = D$, $\sigma: D \ni \delta \mapsto (e_{\widehat{D}}, \sigma_0(\delta)) \in \mathcal{G}$ defines an inverse preserving section of $\mathcal{G}$. Since $\mathcal{G}_1 := \ker\Delta_{\mathcal{G}}$ is nothing but $\widehat{D} \times G_1$, $\omega |_{\mathcal{G}_1 \times \mathcal{G}_1} = 1$ and hence $L_\omega(\mathcal{G}_1) = L(\mathcal{G})$ holds. By \cite[Corollary 4.3]{Miy25} and Theorem \ref{thm:cocycle_action_decomp}, we have 
    \begin{align*}
        \begin{cases}
            L_\omega(\mathcal{G}) \cong L_\omega(\mathcal{G}_1) \rtimes_{(\check{\alpha}^\omega, u^\omega)} D = L(\mathcal{G}_1) \rtimes_{(\check{\alpha}^\omega, u^\omega)} D \\
            L(\mathcal{G}) \cong L(\mathcal{G}_1) \rtimes_{(\check{\beta}, v)} D 
        \end{cases}
    \end{align*}
    for some cocycle actions $(\check{\alpha}^\omega, u^\omega) : D \curvearrowright L_\omega(\mathcal{G}_1) = L(\mathcal{G}_1)$ and $(\check{\beta}, v) : D \curvearrowright L(\mathcal{G}_1)$ associated with the section $\sigma$. Thus, we may interpret the difference between $L_\omega(\mathcal{G})$ and $L(\mathcal{G})$ as resulting from the change of the cocycle actions due to the 2-cocycle $\omega$. We will observe how the change occurs. 

    For $L(\mathcal{G})$, we study the cocycle action $(\check{\beta}, v)$. Let $(\beta, c) : D \curvearrowright \mathcal{G}_1$ be the cocycle action defined by $\beta_\delta := \text{Ad}(\sigma(\delta)), c(\delta_1, \delta2) := \sigma(\delta_1)\sigma(\delta_2)\sigma(\delta_1 \delta_2)^{-1}$ where $\delta, \delta_1, \delta_2 \in D, (\gamma, s) \in \mathcal{G}_1$. As in the proof of \cite[Proposition 4.1]{Miy25}, we see that $\mathcal{G} \cong \mathcal{G}_1 \rtimes_{(\beta, c)} D$ and $$\check{\beta}_\delta(\lambda_{\mathcal{G}_1}(\gamma, s)) = \lambda_{\mathcal{G}_1}(\beta_\delta(\gamma, s)) = \lambda_{\mathcal{G}_1}(\gamma, \sigma_0(\delta) s \sigma_0(\delta)^{-1}),$$ where $\delta \in D, \gamma \in \widehat{D}, s \in G_1$.
    Note that $L(\mathcal{G}_1) \cong L(\widehat{D}) \bar\otimes L(G_1) \cong \ell^\infty(D) \bar\otimes L(G_1)$ and $Z(L(\mathcal{G}_1)) \cong \ell^\infty(D)$ by the assumption that $L(G_1)$ is a factor, we conclude that $(\check{\beta}, v) : D \curvearrowright Z(L(\mathcal{G}_1))$ corresponds to the trivial action $\text{id} : D \curvearrowright \ell^\infty(D)$ defined by $\text{id}_\delta(f) = f, \,\,\delta \in D, f \in \ell^\infty(D)$. It is clear that $\text{id} : D \curvearrowright \ell^\infty(D)$ is not ergodic and there is no unitary $f_t \in \ell^\infty(D)$ which satisfies $\text{id}_\delta(f_t) = \delta^{-it} f_t \,\,(\delta \in D)$ for each $t \in \R \backslash \{0\}.$ Thus, by \cite[Theorem 4.6, Proposition 5.11]{Miy25}, we conclude that $L(\mathcal{G})$ is a non-semifinite non-factor.

    For $L_\omega(\mathcal{G})$, we study the cocycle action $(\check{\alpha}^\omega, u^\omega
    )$. With the same notation as in the proof of Theorem \ref{thm:cocycle_action_decomp}, we see that 
    \begin{align*}        \check{\alpha}_\delta^\omega(\lambda_{\mathcal{G}_1}^\omega(\gamma,s)) &= \Omega_{\mathcal{G}_1}(\check{\alpha}_\delta(\lambda_{\mathcal{G}_1(\omega)}(1,(\gamma, s)))) = \Omega_{\mathcal{G}_1}(\lambda_{\mathcal{G}_1(\omega)}(\tilde{\sigma}(\delta)(1,(\gamma, s))\tilde{\sigma}(\delta)^{-1})) \\
    &= \Omega_{\mathcal{G}_1}(\lambda_{\mathcal{G}_1(\omega)}((1,(e_{\widehat{D}}, \sigma_0(\delta)))(1,(\gamma, s))(1, (e_{\widehat{D}}, \sigma_0(\delta)^{-1})))) \\
    &= \Omega_{\mathcal{G}_1}(\lambda_{\mathcal{G}_1(\omega)}((\overline{(\gamma \mid \delta)}^{1/2}, (\gamma, \sigma_0(\delta)s))(1, (e_{\widehat{D}}, \sigma_0(\delta)^{-1})))) \\
    &= \Omega_{\mathcal{G}_1}(\lambda_{\mathcal{G}_1(\omega)}(\overline{(\gamma \mid \delta)}, (\gamma, \sigma_0(\delta)s\sigma_0(\delta)^{-1}))) \\
    &= \overline{(\gamma \mid \delta)} \lambda_{\mathcal{G}_1}^\omega(\gamma, \sigma_0(\delta)s\sigma_0(\delta)^{-1}),
    \end{align*}
    where $\delta \in D, \gamma \in \widehat{D}, s \in G_1$. Note that under the identification $L(\widehat{D}) \cong \ell^\infty(D)$, $\lambda_{\hat{D}}(\gamma)$ corresponds to the function $\overline{(\gamma \mid \cdot\,)}$ and $\lambda_{\mathcal{G}_1}^\omega = \lambda_{\mathcal{G}}$. Since $\overline{(\gamma \mid \delta)(\gamma \mid \cdot \,)} = \overline{(\gamma \mid \delta \,\cdot\,)}$ and $\widehat{D}$ generates $\ell^\infty(D)$, using the isomorphism $L(\mathcal{G}_1) \cong \ell^\infty(D) \bar\otimes L(G_1)$, we conclude that $(\check{\alpha}^\omega, u^\omega
    ) : D \curvearrowright Z(L_\omega(\mathcal{G}_1)) = Z(L(\mathcal{G}_1))$ corresponds to the action $\tilde{\alpha} : D \curvearrowright \ell^\infty(D)$ defined by $$(\tilde{\alpha}_{\delta_1} f)(\delta_2)=f(\delta_1 \delta_2) \qquad \delta_1, \delta_2 \in D, f \in \ell^\infty(D).$$ 
    Assume $f \in \ell^\infty(D)$ satisfies $\tilde{\alpha}_\delta f=f$ for all $\delta \in D$. Then, $f(\delta) = (\tilde{\alpha}_\delta f)(1) = f(1)$ holds for all $\delta \in D$. Thus, we have $f \in \C \cdot1_D$ and conclude that $\tilde{\alpha} : D \curvearrowright \ell^\infty(D)$ is ergodic. Moreover, for each $t \in \R$, define $f_t(\delta) = \delta^{-it} $ for all $\delta \in D$. Then, $(f_t)_{t \in \R} \subseteq \ell^\infty(D)$ defines a one parameter unitary group which satisfies $\tilde{\alpha}_\delta f_t = \delta^{-it} f_t$ for all $\delta \in D, t \in \R.$
    By Corollary~\ref{cor:factoriality_of_the_twisted_version} and Proposition~\ref{prop:condition_of_semifiniteness}, we conclude that $L_\omega(\mathcal{G})$ is a semifinite factor. $\hfill\blacksquare$
\end{ex}

We can also consider Corollary~\ref{cor:forms_of_the_twisted_basic_construction} using the same technique as above.

\begin{ex}
    Let $G$ be a second countable almost unimodular group with a 2-cocycle $\omega : G \times G \to \mathbb{T}$. Denote $G_1 := \ker\Delta_G, D:= \Delta_G(G)$. Suppose that $L_\omega(G_1)$ is a factor and $G$ is not unimodular. Put $\mathcal{G} := \widehat{D} \times G$. We define 2-cocycles on $\mathcal{G}$ as follows: 
    \begin{align*}
        \begin{cases}
            \tilde{\omega}((\gamma_1, s_1), (\gamma_2, s_2)) = (\gamma_1\mid\Delta_G(s_2))^{1/2}\overline{(\gamma_2\mid\Delta_G(s_1))}^{1/2}\omega(s_1, s_2), \\
            (\text{id} \times \omega)((\gamma_1, s_1), (\gamma_2, s_2)) = \omega(s_1, s_2) \qquad \gamma_1, \gamma_2 \in \widehat{D}, s_1, s_2 \in G.
        \end{cases}
    \end{align*}
    It can be shown that the 2-cocycle $\tilde{\omega}$ is fully normalized and cohomologous to the 2-cocycle defined in Corollary \ref{cor:forms_of_the_twisted_basic_construction}.
    Similarly to the argument of Example~\ref{ex:analysis_of_GG25_Example2.3}, let $\sigma_0 : D \to G$ be an arbitrary inverse preserving section. Note that $\Delta_{\mathcal{G}}(\mathcal{G}) = D$ and $\sigma: D \ni \delta \mapsto (e_{\widehat{D}}, \sigma_0(\delta)) \in \mathcal{G}$ defines an inverse preserving section of $\mathcal{G}$. Since $\mathcal{G}_1 := \text{ker}\Delta_{\mathcal{G}}$ is nothing but $\widehat{D} \times G_1$, for each $(\gamma_1, s_1), (\gamma_2, s_2) \in \mathcal{G}_1$, we have 
    $\tilde{\omega}((\gamma_1, s_1), (\gamma_2, s_2)) = \omega(s_1, s_2) = (\text{id} \times \omega)((\gamma_1, s_1), (\gamma_2, s_2)).$ 
    Thus, it follows that $L_{\tilde{\omega}}(\mathcal{G}_1) = L_{(\text{id} \times \omega)}(\mathcal{G}_1) \cong L(\widehat{D}) \bar\otimes L_\omega(G_1) \cong \ell^\infty(D) \bar\otimes L_\omega(G_1)$ by Lemma \ref{lem:tensor_decomposition_of_product_group}. 
    By Theorem \ref{thm:cocycle_action_decomp}, we have 
    \begin{align*}
        \begin{cases}
            L_{\tilde{\omega}}(\mathcal{G}) \cong L_{\tilde{\omega}}(\mathcal{G}_1) \rtimes_{(\check{\alpha}^{\tilde{\omega}}, u^{\tilde{\omega}})} D = L_{(\text{id} \times \omega)}(\mathcal{G}_1) \rtimes_{(\check{\alpha}^{\tilde{\omega}}, u^{\tilde{\omega}})} D \\
            L_{(\text{id} \times \omega)}(\mathcal{G}) \cong L_{(\text{id} \times \omega)}(\mathcal{G}_1) \rtimes_{(\check{\beta}^{(\text{id} \times \omega)}, v^{(\text{id} \times \omega)})} D
        \end{cases}
    \end{align*}
    for cocycle actions $(\check{\alpha}^{\tilde{\omega}}, u^{\tilde{\omega}}) : D \curvearrowright L_{\tilde{\omega}}(\mathcal{G}_1) = L_{(\text{id} \times \omega)}(\mathcal{G}_1)$ and $(\check{\beta}^{(\text{id} \times \omega)}, v^{(\text{id} \times \omega)}) : D \curvearrowright L_{(\text{id} \times \omega)}(\mathcal{G}_1)$ associated with the section $\sigma$. We verify that a similar change occurs in twisted group von Neumann algebras $L_{\tilde{\omega}}(\mathcal{G})$ and $L_{(\text{id} \times \omega)}(\mathcal{G})$ as in Example \ref{ex:analysis_of_GG25_Example2.3}.

    Since $Z(L_{\tilde{\omega}}(\mathcal{G}_1)) = Z(L_{(\text{id} \times \omega)}(\mathcal{G}_1)) \cong Z(L(\widehat{D})) \bar\otimes Z(L_\omega(G_1)) \cong \ell^\infty(D)$, we examine how $\lambda_{\mathcal{G}_1}^{(\text{id} \times \omega)}(\gamma, e_{G})$ for all $\gamma \in \widehat{D}$ changes under the influence of the cocycle actions. With the same computation as in Example~\ref{ex:analysis_of_GG25_Example2.3}, we have 
    \begin{align*}
        \begin{cases}
            \check{\alpha}^{\tilde{\omega}}_\delta(\lambda_{\mathcal{G}_1}^{(\text{id} \times \omega)}(\gamma, e_{G})) = \overline{(\gamma \mid \delta)}\lambda_{\mathcal{G}_1}^{(\text{id} \times \omega)}(\gamma, e_{G}), \\
            \check{\beta}^{(\text{id} \times \omega)}(\lambda_{\mathcal{G}_1}^{(\text{id} \times \omega)}(\gamma, e_{G})) = \lambda_{\mathcal{G}_1}^{(\text{id} \times \omega)}(\gamma, e_{G}) \qquad \delta \in D, \gamma \in \widehat{D}.
        \end{cases}
    \end{align*}
    Following Example \ref{ex:analysis_of_GG25_Example2.3}, we conclude that the cocycle action $(\check{\alpha}^{\tilde{\omega}}, u^{\tilde{\omega}}) : D \curvearrowright Z(L_{\tilde{\omega}}(\mathcal{G}_1)) = Z(L_{(\text{id} \times \omega)}(\mathcal{G}_1))$ corresponds to the action $\tilde{\alpha} : D \curvearrowright \ell^\infty(D)$ and hence $L_{\tilde{\omega}}(\mathcal{G})$ is a semifinite factor. On the other hand, we see that the cocycle action $(\check{\beta}^{(\text{id} \times \omega)}, v^{(\text{id} \times \omega)}) : D \curvearrowright Z(L_{(\text{id} \times \omega)}(\mathcal{G}_1))$ corresponds to the trivial action $\text{id} : D \curvearrowright \ell^\infty(D)$ and hence $L_{(\text{id} \times \omega)}(\mathcal{G})$ is a non-semifinite non-factor. $\hfill\blacksquare$
\end{ex}

\bibliographystyle{amsalpha}
\bibliography{references}

\providecommand{\bysame}{\leavevmode\hbox to3em{\hrulefill}\thinspace}
\providecommand{\MR}{\relax\ifhmode\unskip\space\fi MR }
\providecommand{\MRhref}[2]{%
  \href{http://www.ams.org/mathscinet-getitem?mr=#1}{#2}
}
\providecommand{\href}[2]{#2}
\begin{thebibliography}{GGLN25}

\bibitem[BB18]{BB18}
R\'emi Boutonnet and Arnaud Brothier, \emph{Crossed-products by locally compact groups: intermediate subfactors}, J. Operator Theory \textbf{79} (2018), no.~1, 101--137. \MR{3764145}

\bibitem[Con72]{Con72}
Alain Connes, \emph{\'{E}tats presque p\'eriodiques sur une alg\`ebre de von {N}eumann}, C. R. Acad. Sci. Paris S\'er. A-B \textbf{274} (1972), A1402--A1405. \MR{0295092}

\bibitem[Con73]{Con73}
\bysame, \emph{Une classification des facteurs de type {${\mathrm{III}}$}}, Ann. Sci. \'Ecole Norm. Sup. (4) \textbf{6} (1973), 133--252. \MR{0341115 (49 \#5865)}

\bibitem[Con74]{Con74}
\bysame, \emph{Almost periodic states and factors of type {${\mathrm{III}}_{1}$}}, J. Functional Analysis \textbf{16} (1974), 415--445. \MR{0358374 (50 \#10840)}

\bibitem[Edw69]{Edw69}
C.~M. Edwards, \emph{{$C\sp{\ast} $}-algebras of central group extensions. {I}}, Ann. Inst. H. Poincar\'e{} Sect. A (N.S.) \textbf{10} (1969), 229--246. \MR{248284}

\bibitem[EL69]{EL69}
C.~M. Edwards and J.~T. Lewis, \emph{Twisted group algebras. {I}}, Comm. Math. Phys. \textbf{13} (1969), 119--130. \MR{253064}

\bibitem[Fol16]{Fol16}
Gerald~B. Folland, \emph{A course in abstract harmonic analysis}, second ed., Textbooks in Mathematics, CRC Press, Boca Raton, FL, 2016. \MR{3444405}

\bibitem[GG25]{GG25}
Aldo Garcia~Guinto, \emph{Projective representations of almost unimodular groups}, \textit{Preprint available at} \texttt{arXiv:2509.09065}, 2025.

\bibitem[GGLN25]{GGLN25}
Aldo Garcia~Guinto, Matthew Lorentz, and Brent Nelson, \emph{Murray--von {N}eumann dimension for strictly semifinite weights}, J. Funct. Anal. \textbf{289} (2025), no.~5, Paper No. 110938, 58. \MR{4885962}

\bibitem[GGN25]{GGN25}
Aldo Garcia~Guinto and Brent Nelson, \emph{Almost unimodular groups}, \textit{Preprint available at} \texttt{arXiv:2504.08107}, 2025.

\bibitem[Haa79]{Haa79}
Uffe Haagerup, \emph{On the dual weights for crossed products of von {N}eumann algebras. {II}. {A}pplication of operator-valued weights}, Math. Scand. \textbf{43} (1978/79), no.~1, 119--140. \MR{523831}

\bibitem[Hag76]{Hag76}
Yoshinori Haga, \emph{Crossed products of von {N}eumann algebras by compact groups}, Tohoku Math. J. (2) \textbf{28} (1976), no.~4, 511--522. \MR{440379}

\bibitem[HR79]{HR79}
Edwin Hewitt and Kenneth~A. Ross, \emph{Abstract harmonic analysis. {V}ol. {I}}, second ed., Grundlehren der Mathematischen Wissenschaften, vol. 115, Springer-Verlag, Berlin-New York, 1979, Structure of topological groups, integration theory, group representations. \MR{551496}

\bibitem[ILP98]{ILP98}
Masaki Izumi, Roberto Longo, and Sorin Popa, \emph{A {G}alois correspondence for compact groups of automorphisms of von {N}eumann algebras with a generalization to {K}ac algebras}, J. Funct. Anal. \textbf{155} (1998), no.~1, 25--63. \MR{1622812}

\bibitem[Kle62]{Kle62}
Adam Kleppner, \emph{The structure of some induced representations}, Duke Math. J. \textbf{29} (1962), 555--572. \MR{141735}

\bibitem[Kle74]{Kle74}
\bysame, \emph{Continuity and measurability of multiplier and projective representations}, J. Functional Analysis \textbf{17} (1974), 214--226. \MR{364536}

\bibitem[Mac57]{Mac57}
George~W. Mackey, \emph{Borel structure in groups and their duals}, Trans. Amer. Math. Soc. \textbf{85} (1957), 134--165. \MR{89999}

\bibitem[Mac58]{Mac58}
\bysame, \emph{Unitary representations of group extensions. {I}}, Acta Math. \textbf{99} (1958), 265--311. \MR{98328}

\bibitem[Miy25]{Miy25}
Yuki Miyamoto, \emph{Analysis of the plancherel weight and factoriality of the group von {N}eumann algebras of non-unimodular totally disconnected groups}, \textit{Preprint available at} \texttt{arXiv:2505.21253}, 2025.

\bibitem[Mor25]{Mor25}
Basile Morando, \emph{The regular representation of {N}eretin groups is factorial}, \textit{Preprint available at} \texttt{arXiv:2506.24029}, 2025.

\bibitem[NS79]{NS79}
Yoshiomi Nakagami and Colin Sutherland, \emph{Takesaki's duality for regular extensions of von {N}eumann algebras}, Pacific J. Math. \textbf{83} (1979), no.~1, 221--229. \MR{555050}

\bibitem[Sut80]{Sut80}
Colin~E. Sutherland, \emph{Cohomology and extensions of von {N}eumann algebras. {I}}, Publ. Res. Inst. Math. Sci. \textbf{16} (1980), no.~1, 105--133. \MR{574031}

\bibitem[Tak03]{Tak03}
Masamichi Takesaki, \emph{Theory of operator algebras. {II}}, Encyclopaedia of Mathematical Sciences, vol. 125, Springer-Verlag, Berlin, 2003, Operator Algebras and Non-commutative Geometry, 6. \MR{1943006 (2004g:46079)}

\bibitem[TT72]{TT72}
Masamichi Takesaki and Nobuhiko Tatsuuma, \emph{Duality and subgroups. {II}}, J. Functional Analysis \textbf{11} (1972), 184--190. \MR{384995}

\bibitem[Vae25]{Vaes25}
Stefaan Vaes, \emph{Factoriality of twisted locally compact group von {N}eumann algebras}, C. R. Math. Acad. Sci. Paris \textbf{363} (2025), 853--859. \MR{4933124}

\end{thebibliography}

\end{document}